\newtheorem{tw}{Theorem}[section]
\newtheorem{lm}[tw]{Lemma}
\newtheorem{wn}[tw]{Corollary}
\newtheorem{pr}[tw]{Proposition}
\theoremstyle{definition}
\newtheorem{uw}[tw]{Remark}
\newtheorem*{problem}{Problem}
\numberwithin{equation}{section}
\newcommand{\R}{\mathbb{R}}
\newcommand{\Z}{\mathbb{Z}}
\newcommand{\N}{\mathbb{N}}
\newcommand{\T}{\mathbb{T}}
\newcommand{\cS}{\mathcal{S}}
\newcommand{\Q}{\mathbb{Q}}
\newcommand{\cB}{\mathcal{B}}
\newcommand{\cC}{\mathcal{C}}
\newcommand{\cT}{\mathcal{T}}
\newcommand{\1}{\mathbbm{1}}
\newcommand{\xbm}{(X,\mathcal{ B},\mu)}
\newcommand{\vep}{\varepsilon}
\newcommand{\ct}{\mathcal{T}}
\begin{document}
\date{\today}
\title[Non-reversibility  for ergodic flows]{Non-reversibility and self-joinings of higher orders for ergodic flows}
\author[K. Fr\k{a}czek \and J. Ku\l aga \and M. Lema\'nczyk]{Krzysztof Fr\k{a}czek \and
Joanna Ku\l aga \and Mariusz Lema\'nczyk}
\address{Faculty of Mathematics and Computer Science, Nicolaus
Copernicus University, ul. Chopina 12/18, 87-100 Toru\'n, Poland}
\email{fraczek@mat.umk.pl, joanna.kulaga@gmail.com,
mlem@mat.umk.pl} \subjclass[2000]{ 37A10, 37B05}

\maketitle

\begin{abstract}
By studying the weak closure of multidimensional off-diagonal
self-joinings we provide a criterion for non-isomorphism of a flow
with its inverse, hence the non-reversibility of a flow. This is
applied to special flows over rigid automorphisms. In particular,
we apply the criterion to special flows over irrational rotations,
providing a large class of non-reversible flows, including some
analytic reparametrizations of linear flows on $\T^2$, so called
von Neumann's flows and some special flows with piecewise
polynomial roof functions. A topological counterpart is also
developed with the full solution of the problem of the topological
self-similarity of continuous special flows over irrational
rotations. This yields examples of continuous special flows over
irrational rotations without topological self-similarities and
having all non-zero real numbers as scales of measure-theoretic
self-similarities.
\end{abstract}

\tableofcontents
\section{Introduction}
Given a (measurable)  measure-preserving flow $\cT=(T_t)_{t\in\R}$
on a probability standard Borel space $\xbm$ one says that it is
\emph{reversible} if $\cT$ is isomorphic to its inverse with a
conjugating automorphism $S:\xbm\to\xbm$ being an
involution\footnote{It should be noticed that, in general, even if
(\ref{inwol}) and~(\ref{inwol1}) are satisfied for some $S$ then
we can find $S'$ which is not an involution but
satisfies~(\ref{inwol}) \cite{Go-Ju-Le-Ru}. For example, take
$T(x,y)=(x+\alpha,x+y)$ on $\T^2$. Then
$T^{-1}(x,y)=(x-\alpha,-(x-\alpha)+y)$ and $S(x,y)=(-x,x+y)$
settles an isomorphism of $T$ and its  inverse. Of course
$S^2=Id$. On the other hand if we set
$\sigma_{\gamma}(x,y)=(x,y+\gamma)$ then $\sigma_\gamma
T=T\sigma_{\gamma}$  and  $\sigma_\gamma S=S\sigma_\gamma$.  Hence
$(S\sigma_\gamma)T=T^{-1}(S\sigma_\gamma)$. But
$(S\sigma_\gamma)^n=S^{n\; ({\rm mod}\; 2)}\sigma_{n\gamma}$, so
we obtain a conjugation which is of infinite order (if $\gamma$ is
irrational).

Another example can be given by taking first a weakly mixing flow
$(S_t)$  and then considering $T_t=S_t\times S_{-t}$ in which
$(x,y)\mapsto (y,x)$ yields reversibility of $\cT$. On the other
hand, $W(x,y)=(S_1y,x)$ also settles an isomorphism of $\cT$ and
its inverse and since $W^2=S_1\times S_1$, $W$ is even weakly
mixing.}, i.e.:
\begin{equation}\label{inwol}
T_t\circ S=S\circ T_{-t}\;\;\text{for each $t\in\R$}
\end{equation}
 and
\begin{equation}\label{inwol1}S^2=Id.\end{equation}
As far as we know, in ergodic theory, this problem was not
systematically studied for flows. In case of automorphisms first
steps were taken up in \cite{Go-Ju-Le-Ru}. In that paper it has
been shown that for an arbitrary automorphism $T$ with simple
spectrum all isomorphisms (if there is any) between $T$ and
$T^{-1}$ must be involutions. The same result holds for flows: a
simple  spectrum flow isomorphic to its inverse is reversible, in
fact,~(\ref{inwol})
implies~(\ref{inwol1})\footnote{\label{trivcent} The proof from
\cite{Go-Ju-Le-Ru} goes through for flows.

One more natural case when isomorphism of $\cT$ and its inverse
implies reversibility arises if we  assume that the centralizer
$C((T_t))$ is trivial, i.e.\ equal to $\{T_t:\:t\in\R\}$ and the
$\R$-action $t\mapsto T_t$ is free. Indeed, as in
\cite{Go-Ju-Le-Ru}, we notice that whenever $S$
satisfies~(\ref{inwol}) then $S^2$ belongs to $C(\cT)$, so
$S^2=T_{t_0}$. Now, clearly $T_{t_0}S=ST_{t_0}$ and since
$T_{t_0}S=ST_{-t_0}$, we  have $T_{-t_0}=T_{t_0}$ and hence
$t_0=0$ by the freeness assumption.}. Another class of flows in
which~(\ref{inwol}) puts some restrictions on the order of $S$ is
the class of flows having so called  weak closure property: each
element $R$ of the centralizer $C(\cT)$ is a weak limit of
time-$t$ automorphisms, i.e.\ $R=\lim_{k\to\infty} T_{t_k}$  for
some $t_k\to\infty$, namely we must have $S^4=Id$~\footnote{We
borrow the argument from \cite{Go-Ju-Le-Ru}: $C(\cT)\ni
S^2=\lim_{k\to\infty} T_{t_k}$ and since $T_{t_k}S=ST_{-t_k}$, by
passing to the limit, $S^3=S^{-1}$.}. Moreover, if $S^2\neq Id$
then $\cT$ is not reversible\footnote{Again, borrowing the
argument from \cite{Go-Ju-Le-Ru}, suppose that $S'$
satisfies~(\ref{inwol}). Then $SS'\in C(\cT)$, so
$SS'=\lim_{k\to\infty} T_{t_k}$.  Since $T_{t_k}S=ST_{-t_k}$,
$(SS')S=S(SS')^{-1}$, whence $(S')^{2}=S^{-2}$, but $S^4=Id$, so
$S^2=(S')^2$.

Note that it follows that if $\cT$ satisfies the weak  closure
theorem, is isomorphic to its inverse and is not reversible then
it has a 2-point fiber factor, namely $\{B\in{\mathcal B}:\:
S^2B=B\}$, which is reversible.}.

It is easy to observe that isomorphisms between $T$ and $T^{-1}$
lift to isomorphisms of the corresponding suspension flow  (see
Section~\ref{specialflow} for a definition) and its inverse,
moreover, as observed e.g.\ in \cite{Da-Le}, each isomorphism
between the suspension flow and its inverse must come from an
isomorphism of $T$ and $T^{-1}$. In \cite{Go-Ju-Le-Ru} there is a
construction of an automorphism $T$ satisfying the weak closure
property, isomorphic to its inverse and such that all conjugations
$S$ between $T$ and $T^{-1}$ have order four. By taking the
suspension flow over this example  we obtain an ergodic flow
having the weak closure property, being isomorphic to its inverse
and such that all conjugations satisfying~(\ref{inwol}) are of
order four, so this flow is  not reversible.

The problem of reversibility is closely related to  the
self-similarity problem (see \cite{Da-Ry}, \cite{Fr-Le.selfs}).
Recall  that $s\in \R^*$ is a scale of self-similarity for a
measure-preserving flow $\cT=(T_t)_{t\in\R}$ if $\cT$ is
isomorphic to the flow $\cT\circ s:=(T_{st})_{t\in\R}$. The
multiplicative subgroup of all scales of self-similarity we will
denote by $I(\ct)\subset \R^*$. The flow $\ct$ is called
self-similar if $I(\ct)\nsubseteq\{-1,1\}$. Of course, if $\ct$ is
reversible then $-1\in I(\ct)$.

One of possibilities to show the absence of  self-similarities for
a non-rigid flow is to show that in the weak closure of its
$2$-off-diagonal self-joinings  there is an integral of
off-diagonal joinings \cite{Fr-Le.selfs}. However, it is rather
easy to see that on the level of 2-self-joinings we cannot
distinguish between an action and its inverse. This is Ryzhikov's
paper \cite{Ry1} which was historically the first to show that a
certain asymmetry between an automorphism and its inverse can be
detected on the level of $3$-self-joinings by studying the weak
closure of $3$-off-diagonal self-joinings (see also \cite{Da-Ry}).
By taking the suspension of Ryzhikov's automorphism we obtain a
flow non-isomorphic to its inverse. One of the purposes of the
paper is to generalize this approach and present potential
asymmetries in the weak closure of higher dimensional off-diagonal
self-joinings when we change time in the suspension over a rigid
automorphism, see Proposition~\ref{main1}.

In Section~\ref{sec:joinings} we extend techniques introduced in \cite{Fr-Le} for $2$-joinings
to the class of higher order joinings, see Proposition~\ref{main1}.
Recall that $2$-joining approach was used fruitfully in proving the absence
of self-similarity for some classes of special flows over irrational rotations
on the circle, including so called von Neumann flows, see  \cite{Fr-Le.selfs}.
However, for proving non-isomorphism of the flow and its inverse this method breaks down.
In this case, as in \cite{Ry1}, we will apply $3$-joinings to distinguish between
the flow and its inverse, see Proposition~\ref{kryt}.
In Section~\ref{sec:vonn}, using Proposition~\ref{kryt}, we prove that
any von Neumann flow is not isomorphic to its inverse for almost every
rotation in the base.

In Section~\ref{polyroof}, the approach developed in
Section~\ref{sec:joinings} is applied to special flows $T^f$ built
over irrational rotations $Tx=x+\alpha$ on the circle and under
$C^{r-1}$-roof functions ($r$ is an odd natural number) which are
polynomials of degree $r$ on two complementary intervals
$[0,\beta)$ and $[\beta,1)$ ($0<\beta<1$). Using $r+1$--joinings
we prove that for a.e.\ $\beta$ the flow $T^f$ is not isomorphic
to its inverse, whenever $\alpha$ satisfies a Diophantine type
condition (along a subsequence, see \eqref{eq:diof}).

In Section~\ref{sec:analytic} the 3-joining approach turns out to
be sufficient to construct an analytic area-preserving flow on the
two torus which is not isomorphic to its inverse.  In other words,
we show that we can change time in an ergodic linear  flow (which
is always reversible) in an analytic way so that the resulting
flow is weakly mixing and not reversible.   We use  the AACCP
method introduced in \cite{Kw-Le-Ru}.  Additionally, slightly
modifying the construction, we prove that the resulting flow has
no rational self-similarities. In fact, we obtain disjointness (in
the Furstenberg sense) of any two different rational time
automorphisms. This kind of investigations is partly motivated by
Sarnak's conjecture on orthogonality of deterministic sequences
from M\"obius function through disjointness: see \cite{Bo-Sa-Zi}.

In Section~\ref{sec:chacon}, we come back  to automorphisms, and
as in \cite{Ry1}, we show that the $3$-joining method can be
applied  to a class of rank one automorphisms having a subsequence
of towers of Chacon's type.  We show that they are not reversible.

In Section~\ref{sec:topol}  we  deal with topological self-similarities of
continuous time changes of minimal linear flows on the two torus.
Each such flow is topologically conjugate to the special flow $T^f$ build over
an irrational rotation $Tx=x+\alpha$ on the circle and under a continuous
roof function $f:\T\to\R_+$.
We show that if $T^f$ is topologically self-similar then $\alpha$ is
a quadratic irrational and $f$ is topologically cohomological to a constant
function. It follows that if a continuous time change of a minimal linear
flow on the two torus is topologically self-similar then it is topologically
conjugate to a minimal linear flows as well. As a byproduct we obtain
an example of a continuous flow on the torus which has no topological self-similarities
and the group scales of self-similarity (as a measure-preserving system) is equal to $\R^*$.

First historical examples of automorphisms non-isomorphic to their
inverses were provided by Anzai \cite{An}, Malkin \cite{Ma} and
Oseledets \cite{Os}. Moreover, the property of being isomorphic to
its inverse (the more, reversibility) is not a typical property.
As shown by del Junco \cite{Ju}  (for automorphisms) and by
Danilenko and Ryzhikov \cite{Da-Ry} (for flows) typical flow is
disjoint from its inverse. But there are quite a few natural
examples of flows which are reversible. Let us go through a
selection of known examples.

\vspace{1ex}

\noindent\textbf{A) All ergodic  flows with discrete spectrum are
reversible.} This follows easily from the Halmos-von Neumann
theorem,  see e.g.\ \cite{Co-Fo-Si} (the fact that each
isomorphism must be an involution is a consequence of the
simplicity of the spectrum of such flows).

\vspace{1ex}

\noindent\textbf{B) All Gaussian flows are reversible.} Indeed,
each Gaussian flow is determined by a one-parameter unitary group
${\mathcal U}=(U_t)_{t\in\R}$ acting on a separable Hilbert space
$H$ such that there is a spectral decomposition
\begin{equation}\label{de}H=\bigoplus_{n=1}^\infty\R(x_n)\;\;\text{with}\;\;\sigma_{x_1}
\gg\sigma_{x_2}\gg\ldots\;\;\text{and}\;\;\sigma_{x_n}(A)=\sigma_{x_n}(-A)\end{equation}
for each Borel subset $A\subset\R$ and $n\geq1$ (and
$\sigma_{x_1}$  is assumed to be continuous), see \cite{Ka-Th},
\cite{Le}, \cite{Le-Pa-Th}. Now, the action  ${\mathcal U}$ on
$\R(x_n)$ is isomorphic to the action ${\mathcal V}^{(n)}$:
$$
V^{(n)}_t(f)(x)=e^{2\pi itx}f(x)\;\;\text{for}\;\;f\in
L^2(\R,\sigma_{x_n}),$$ so $I_nf(x)=f(-x)$ is an involution which
settles an isomorphism of ${\mathcal V}^{(n)}$ and its inverse.
Then, up to isomorphism, $I=\bigoplus_{n=1}^\infty I^{(n)}$ is an
involution which settles an isomorphism of ${\mathcal U}$ and its
inverse\footnote{Notice that the same argument works for an
arbitrary Koopman representation $U_t=U_{T_t}$. In other words, an
arbitrary Koopman representation is unitarily reversible.} and it
extends to a measure-preserving isomorphism of the corresponding
Gaussian flow $(T_t)$ and its inverse, see e.g.\  \cite{Le-Pa-Th}.

\vspace{1ex}

\noindent\textbf{C) Some horocycle flows are reversible.}
Let $\Gamma\subset PSL_2(\R)$ be a discrete subgroup  with finite
covolume. Then the homogeneous space $X=\Gamma\backslash
PSL_2(\R)$ is the unit tangent bundle of a surface $M$ of constant
negative curvature. Let us consider the corresponding  \emph{horocycle flow}
$(h_t)_{t\in\R}$ and \emph{geodesic flow} $(g_s)_{s\in\R}$
on $X$. Since
\begin{equation}\label{horkom}
g_sh_tg_s^{-1}=h_{e^{-2s}t}\text{ for all }s,t\in\R,
\end{equation}
the  flows $(h_t)_{t\in\R}$ and $(h_{e^{-2s}t})_{t\in\R}$ are measure-theoretic
isomorphic for each $s\in\R$, so
all positive numbers are self-similarity scales for a horocycle flow.

We will now show that some horocycle flows are reversible.
Let now $J$ denote the matrix
\[
J=\begin{pmatrix}1&0\\0&-1\end{pmatrix}.
\]
Clearly, $J\notin SL_2(\R)$. However, if $\Gamma$ satisfies $\Gamma=
J^{-1}\Gamma J$ then $J$ will also act on $\Gamma\backslash
PSL_2(\R)$:
\[
J(\Gamma x):=J^{-1} \Gamma x J=(J^{-1}\Gamma J)J^{-1}xJ= \Gamma
J^{-1}x J.
\] Moreover,
\[
J^{-1} h_t J=h_{-t}
\]
and since $J$ yields an order two map, we obtain that in this case
the horocycle flow is reversible. It follows that
$I((h_t)_{t\in\R})=\R^*$.
\begin{wn}\label{cor:horrev}
In the modular case $\Gamma:=PSL_2(\Z)\subset PSL_2(\R)$, the
horocycle flow $(h_t)_{t\in\R}$ is reversible.
\end{wn}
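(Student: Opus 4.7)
The plan is to reduce the corollary to the general criterion established immediately above it: if a discrete subgroup $\Gamma \subset PSL_2(\R)$ of finite covolume satisfies $\Gamma = J^{-1}\Gamma J$ for $J=\mathrm{diag}(1,-1)$, then the induced action of $J$ on $\Gamma\backslash PSL_2(\R)$ is an involution that conjugates $(h_t)_{t\in\R}$ to $(h_{-t})_{t\in\R}$, giving reversibility. Hence the only thing to check is the normalization condition $J^{-1}\,PSL_2(\Z)\,J = PSL_2(\Z)$.

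This is a direct matrix computation. For $A=\begin{pmatrix}a&b\\ c&d\end{pmatrix}\in SL_2(\Z)$ one has
\[
J^{-1}AJ = JAJ = \begin{pmatrix}1&0\\ 0&-1\end{pmatrix}\begin{pmatrix}a&b\\ c&d\end{pmatrix}\begin{pmatrix}1&0\\ 0&-1\end{pmatrix} = \begin{pmatrix}a&-b\\ -c&d\end{pmatrix},
\]
whose entries are again integers and whose determinant equals $\det A = 1$. Therefore $J^{-1}\,SL_2(\Z)\,J \subset SL_2(\Z)$, and since $J$ is an involution, passing to the opposite inclusion via $J^{-1}(J^{-1}\,SL_2(\Z)\,J)J = SL_2(\Z)$ gives equality. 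Projecting to $PSL_2(\Z)$ (conjugation commutes with quotienting by $\pm I$) yields the required invariance.

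Since $PSL_2(\Z)$ is a lattice in $PSL_2(\R)$ (it has finite covolume), the hypotheses of the preceding general discussion are met, and the induced map $\Gamma x \mapsto \Gamma J^{-1}xJ$ is a well-defined measure-preserving involution on $X = \Gamma\backslash PSL_2(\R)$ intertwining $h_t$ with $h_{-t}$, which is precisely reversibility. There is no real obstacle here; the whole content is the trivial verification that conjugation by $J$ preserves integrality of matrix entries, and the corollary then follows immediately from the construction explained right before its statement.
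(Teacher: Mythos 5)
Your proposal is correct and follows exactly the route the paper intends: the corollary is an immediate application of the general discussion preceding it, and the only content is the verification that conjugation by $J=\mathrm{diag}(1,-1)$ preserves $PSL_2(\Z)$, which your matrix computation supplies (the paper leaves this trivial check implicit). Nothing is missing.
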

There are even cocompact lattices $\Gamma$ which are not
``compatible'' with the matrix $J$. In this case a deep theory of
Ratner \cite{Ra} implies that in particular $(h_t)_{t\in\R}$ is not
measure-theoretically isomorphic to its inverse.

Let us come back to the horocycle flow $(h_t)_{t\in\R}$  on the
modular space $\Gamma\backslash PSL_2(\R)$, $\Gamma=PSL_2(\Z)$. By
Corollary~\ref{cor:horrev}, this flow is reversible. Moreover,
$C((h_t)_{t\in\R})=\{h_t:\:t\in\R\}$. Indeed, first note that
\[\{\alpha\in PSL_2(\R):\alpha\Gamma\alpha^{-1}=\Gamma\}=\Gamma.\]
In view of the celebrated Ratner's Rigidity Theorem (see
Corollary~2 in \cite{Ra-rig}), it follows that $C((h_t)_{t\in\R})$
is indeed  trivial\footnote{The general result of Ratner states
that elements of the centralizer are the composition of $h_{t_0}$
with the automorphism given by $\alpha$ as above.}. Hence, we
obtain the following  more precise version of
Corollary~\ref{cor:horrev} (cf.\ footnote~\ref{trivcent}).

\begin{wn}\label{cor:horrev1}
In the modular case  $\Gamma:=PSL_2(\Z)\subset PSL_2(\R)$ we have
$C((h_t)_{t\in\R})$ $=\{h_t:\:t\in\R\}$. Then, each $S$
establishing isomorphism of $(h_t)_{t\in\R}$ with its inverse is
an involution. Moreover, $S=h_{t_0}\circ J$ for some $t_0\in\R$.
\end{wn}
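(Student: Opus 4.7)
The plan is to combine Ratner's Rigidity Theorem with the explicit reversal furnished by $J$. The essential group-theoretic input, recalled immediately before the corollary, is the equality
\[
\{\alpha\in PSL_2(\R): \alpha\Gamma\alpha^{-1}=\Gamma\}=\Gamma
\]
for $\Gamma=PSL_2(\Z)$.

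I would first finalize the centralizer calculation. Ratner's Rigidity Theorem (Corollary~2 of \cite{Ra-rig}) describes every element of $C((h_t)_{t\in\R})$ as the composition of some $h_{t_0}$ with an automorphism of $\Gamma\backslash PSL_2(\R)$ induced by an element $\alpha$ of the normalizer displayed above; since that normalizer equals $\Gamma$, the induced automorphism is trivial, which yields $C((h_t)_{t\in\R})=\{h_t:t\in\R\}$.

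To pin down the form of $S$, recall the already displayed identity $J^{-1}h_tJ=h_{-t}$, which makes $J$ itself a reversing automorphism. For any $S$ satisfying $h_t\circ S=S\circ h_{-t}$, the product $S\circ J^{-1}$ satisfies
\[
(S\circ J^{-1})\circ h_t=S\circ h_{-t}\circ J^{-1}=h_t\circ(S\circ J^{-1}),
\]
so it belongs to $C((h_t)_{t\in\R})$; by the previous paragraph, $S\circ J^{-1}=h_{t_0}$ for a unique $t_0\in\R$, whence $S=h_{t_0}\circ J$. The involution property then follows in one line: using $Jh_{t_0}=h_{-t_0}J$ (a rewriting of $J^{-1}h_tJ=h_{-t}$) together with $J^2=\mathrm{Id}$,
\[
S^2=(h_{t_0}\circ J)\circ(h_{t_0}\circ J)=h_{t_0}\circ h_{-t_0}\circ J^2=\mathrm{Id}.
\]
The only substantive ingredient is the appeal to Ratner's Rigidity Theorem; everything else is a short diagram chase.
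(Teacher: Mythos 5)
Your proof is correct and follows essentially the same route as the paper: Ratner's Rigidity Theorem together with the normalizer identity $\{\alpha: \alpha\Gamma\alpha^{-1}=\Gamma\}=\Gamma$ gives the trivial centralizer, and the rest is the standard diagram chase with the explicit reversing map $J$. The only cosmetic difference is that the paper (via its earlier footnote) deduces $S^2=Id$ directly from triviality of the centralizer and freeness of the flow, whereas you first pin down $S=h_{t_0}\circ J$ and then square it; both are valid and equivalent in substance.
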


\vspace{1ex}

\noindent\textbf{D) All Bernoulli flows are reversible.} This is
done in two steps. If the entropy is infinite then  (via
Ornstein's isomorphisms theorem \cite{Or}) we have a Gaussian
realization of such a flow and we use \textbf{B)}. If the entropy
is finite then (again via \cite{Or}) we can consider the geodesic
flow on $\Gamma\backslash PSL_2(\R)$. Then
\[
K^{-1}g_tK=g_{-t}\text{ for all }t\in\R,\quad \text{ where }\quad K=\begin{pmatrix}0&1\\-1&0\end{pmatrix}.
\]
This establishes an isomorphism between $(g_t)_{t\in\R}$ and
$(g_{-t})_{t\in\R}$ via an involution ($K^2=Id$ as an element of
$PSL_2(\R)$)~\footnote{An alternative  proof of reversibility of
Bernoulli  was pointed to us by J.-P.\ Thouvenot.  Indeed,
consider the shift $T\colon \{0,1\}^{\Z}\to \{0,1\}^{\Z}$ given by
$T((x_n)_{n\in\Z})=(x_{n+1})_{n\in\Z}$, where $\{0,1\}^{\Z}$ is
equipped with the product measure $\mu=P^{\otimes \Z}$ with
$P(\{0\})=\mu(\{1\})=1/2$. Then the map  $I:(x_n)_{n\in\Z}\mapsto
(x_{-n})_{n\in\Z}$ is an involution conjugating $T$ with $T^{-1}$.
Moreover, there is a roof function $f$ constant on each of the
cylinder sets $\{(x_n)_{n\in\Z}\colon x_0=0\}$,
$\{(x_n)_{n\in\Z}\colon x_0=1\}$ such that the special flow $T^f$
is Bernoulli~\cite{MR0272985}. Now, it suffices to apply
Remark~\ref{uwagarev2} below to conclude that $T^f$ (as well as
$T^f\circ s$ for each $s\in\R\setminus\{0\}$) is reversible. For
the infinite entropy case it suffices to consider the infinite
product $T^f\times T^f\times\ldots$} and hence the isomorphism of
$(g_{st})_{t\in\R}$ and $(g_{-st})_{t\in\R}$ for each
$s\in\R\setminus\{0\}$.

\vspace{1ex}

\noindent\textbf{E) Geodesic flow revisited, Hamiltonian
dynamics.}\footnote{This was  pointed out to us by E. Gutkin.} In
this case we obtain always reversibility, because each such flow
acts on a tangent space following geodesics: the configuration
space consists of pairs $(x,v)$ ($x$ -- placement, $v$ -- speed)
and the involution is simply given by
\[(x,v)\mapsto (x,-v).\]

\section{Special flows}\label{specialflow}
Assume that $T$ is an ergodic automorphism of  a standard
probability Borel space $(X,\mathcal{B},\mu)$. We let
$\mathcal{ B}(\R)$ and $\lambda_{\R}$ stand for the Borel $\sigma$-algebra and
Lebesgue measure on $\R$ respectively.

Assume $f:X\to\R$ is an $L^1$ strictly positive function. Denote
by $\mathcal{T}^f=(T^f_t)_{t\in\R}$  the corresponding special
flow under $f$ (see e.g.\ \cite{Co-Fo-Si}, Chapter 11). Recall
that such a flow acts on $(X^f,\mathcal{ B}^f,\mu^f)$, where
$X^f=\{(x,s)\in X\times \R:\:0\leq s<f(x)\}$ and $\mathcal{ B}^f$
$(\mu^f)$ is the restriction of $\mathcal{ B}\otimes\mathcal{
B}(\R)$ $(\mu\otimes\lambda_\R)$ to $X^f$. Under the action of
$\mathcal{T}^f$ a point in $X^f$ moves vertically at unit speed,
and we identify the point $(x,f(x))$ with $(Tx,0)$. Clearly, $T^f$ is ergodic as $T$ is ergodic. To describe
this $\R$-action formally set
\[f^{(k)}(x)=\left\{\begin{array}{ccc}
f(x)+f(Tx)+\ldots+f(T^{k-1}x) & \text{if} & k>0\\ 0 & \text{if} &
k=0\\ -\left(f(T^kx)+\ldots+f(T^{-1}x)\right)  & \text{if} & k<0.
\end{array}\right.\]
Let us consider the skew product $T_{-f}:X\times\R\to X\times\R$,
\[T_{-f}(x,r)=(Tx,r-f(x))\]
and the flow $(\sigma_t)_{t\in\R}$ on $(X\times\R,\cB\otimes\cB(\R),\mu\otimes\lambda_{\R})$
\[
\sigma_t(x,r)=(x,r+t).
\]
Then for every
$(x,r)\in X^f$ we have
\begin{equation}\label{co0}
T^f_t(x,r)=T^n_{-f}\circ\sigma_t(x,r)=(T^nx,r+t-f^{(n)}(x)),
\end{equation}
where $n\in\Z$ is  unique for which $f^{(n)}(x)\leq
r+t<f^{(n+1)}(x)$.

\begin{uw}\label{uw:aperiodic}
Recall that if $T$ is an ergodic automorphism of a standard
probability Borel space $(X,\mathcal{B},\mu)$ is aperiodic.
Moreover, any special flow $T^f$ is also aperiodic, i.e.\ for
every $t\neq 0$ we have $\mu^f(\{(x,s)\in
X^f:T^f_t(x,s)=(x,s)\})=0$.
\end{uw}

\begin{uw}\label{uwagarev1}
The special flow $T^f$ can also be seen as the quotient
$\R$-action  $(\sigma_t)_{t\in\R}$, $\sigma_t(x,r)=(x,r+t)$ on the
space $X\times\R/\equiv$, where $\equiv$ is the $T_{-f}$-orbital
equivalence relation, $T_{-f}(x,r)=(Tx,-f(x)+r)$. Indeed,
$\sigma_t\circ T_{-f}=T_{-f}\circ \sigma_t$, so $\sigma_t$ acts on
the quotient space. Moreover, the quotient space
$X\times\R/\equiv$ is naturally isomorphic with $X^f$ by choosing
the unique point from the $T_{-f}$-orbit of $(x,r)$ belonging to
$X^f$. Finally,
$$\left(T^f\right)_t(x,r)=\left(T_{-f}\right)^k\sigma_t(x,r)$$
for a unique $k\in\Z$.
\end{uw}

Using Remark~\ref{uwagarev1} we will now provide  a criterion for
a special flow to be isomorphic to its inverse.

\begin{uw}\label{uwagarev2}
Assume that $T$ is   isomorphic to its inverse: $ST=T^{-1}S$.
Assume moreover that
\begin{equation}\label{fullver}
f(Sx)-f(x)=h(x)-h(Tx)\end{equation} for a measurable $h:X\to\R$.
We claim that the special  flow $T^f$ is isomorphic to its inverse
and is reversible if $S^2=Id$ and $h(TSx)=h(x)$. Indeed, first
notice that
\begin{equation}\label{fv1}
\left(T_{-f}\right)^{-1}=T^{-1}_{f\circ T^{-1}}.\end{equation}
Set
$$ g(x)=f(x)-h(Tx)$$
and consider $S_{g,-1}:X\times\R\to X\times\R$,
$$
S_{g,-1}(x,r)=(Sx,g(x)-r).$$
Note that $S_{g,-1}$ is measurable and preserves the measure $\mu\otimes\lambda_{\R}$.
It follows immediately that
\begin{equation}\label{rv2}
S_{g,-1}\circ\sigma_t=\sigma_{-t}\circ S_{g,-1}\;\;\text{for
each}\;\;t\in\R.\end{equation} All we need to show is that
$S_{g,-1}$ acts on the  space of orbits, that is, it sends a
$T_{-f}$-orbit into another $T_{-f}$-orbit. For that, it is enough
to show that
\begin{equation}\label{rv3}
S_{g,-1}\circ T_{-f}\circ\left(S_{g,-1}\right)^{-1}= \left(T_{-f}\right)^{-1}.\end{equation}
Now, in view of~(\ref{fv1}), the equation~(\ref{rv3}) is equivalent to showing that
$$
f(T^{-1}Sx)+g(x)=g(Tx)+f(x)$$
which indeed holds as by~(\ref{fullver}) (replacing $x$ by $Tx$) we have
$$
f(STx)-f(Tx)=h(Tx)-h(T^2x),$$
so $f(T^{-1}Sx)-f(Tx)=h(Tx)-h(T^2x)$, whence
$$
f(T^{-1}Sx)-f(x)=f(Tx)-f(x)+h(Tx)-h(T^2x)=g(Tx)-g(x),$$
so indeed $S_{g,-1}$ settles an isomorphisms of $T^f$ and its inverse.

For the second part, we simply check that under  the assumption
$S^2=Id$, we have $g(Sx)=g(x)$ if and only if $h(x)=h(TSx)$.

Finally, notice that in the original functional
equation~(\ref{fullver}) we can consider $RS$ instead of $S$ with
$R\in C(T)$ (note however that even if $S^2=Id$ we may now have
$(RS)^2\neq Id$).
\end{uw}

To illustrate Remark~\ref{uwagarev2} consider  the special flow
over irrational rotation $Tx=x+\alpha$ on $\T=[0,1)$ with the roof
function $f$ of the form
$$
f(x)=\left\{\begin{array}{lll}b&\text{for} & x\in[0,a)\\
          c&\text{for}& x\in[a,1),\end{array}\right.$$
where $0<a<1$ and $b,c>0$. Then take $Rx=x+a$ and $Sx=-x$.  Note
that $RS$ is involution and check that $f\circ R\circ S=f$, which
by Remark~\ref{uwagarev2} means that $T^f$ is reversible.

If we take $f=1$ then the resulting special flow is called  the
{\em suspension flow} of $T$. Note also that special flows are
obtained by so called (measurable) change of time of the
suspension flow (see \cite{Co-Fo-Si}). It is easy to see that
$$
T^1_t(x,r)=(T^{[t+r]}x,\{t+r\}).\footnote{$[\cdot]$ stands for the integer part of a real number.}
$$
Recall that a sequence $(q_n)$  of integers, $q_n\to\infty$,  is
called a {\em rigidity sequence} for $T$ if $T^{q_n}\to Id$
(similarly we define a real-valued rigidity sequence for flows).
Note that whenever $(q_n)$ is
\begin{equation}\label{sztywnosczawieszenia}
\text{a rigidity  sequence for $T$, it is a rigidity sequence for
the suspension.}
\end{equation}
Directly from Remark~\ref{uwagarev2} it follows that the
suspension of the reversible automorphism yields a reversible
flow.

\begin{uw}\label{uwcent}
Similarly as the functional equation~(\ref{fullver}) defines an
isomorphism of $T^f$ with its inverse, if $S\in C(T)$
in~(\ref{fullver}) then
\begin{equation}
\label{fullver05}
f(Sx)-f(x)=g(x)-g(Tx)\end{equation}
with $g:X\to\R$ measurable, yields an element of $C(T^f)$. Indeed,  consider the skew product
\[S_{g}:X\times\R\to X\times\R,\qquad S_g(x,r)=(Sx,r+g(x)).\]
Then $S_g$ commutes with the flow $(\sigma_t)_{t\in\R}$ and, by~\eqref{eq:coheq},
with the skew product $T_{-f}$. It follows that $S_g$ can be considered
as an automorphism on $X^f=(X\times\R)/\equiv$ with commutes with the special flow $T^f$.
\end{uw}

The following lemma tells us that whenever the  centralizer of
$T^f$ is trivial, we can solve the functional
equation~(\ref{fullver05}) only in a trivial way.

\begin{lm}\label{lm:trivcent}
Assume that $T$ is ergodic and $C(T^f)=\{T^f_t:\:t\in\R\}$. Suppose that
$S\in C(T)$ and $g:X\to\R$ is a measurable function such that
\begin{equation}\label{eq:coheq}
f\circ S-f=g-g\circ T.
\end{equation}
Then there exist $k\in\Z$ and $t_0\in\R$ such that
\[S=T^k\quad{ and }\quad g=t_0-f^{(k)}.\]
\end{lm}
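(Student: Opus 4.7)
The plan is to exploit Remark \ref{uwcent}, which already tells us that any solution of the cohomological equation \eqref{eq:coheq} produces an automorphism $S_g\in C(T^f)$, where
\[
S_g(x,r)=(Sx,r+g(x))
\]
on $X\times\R$. Since by hypothesis $C(T^f)=\{T^f_t:t\in\R\}$, we must have $S_g=T^f_{t_0}$ on $X^f$ for some $t_0\in\R$. The whole task then reduces to unpacking this equality back on $X\times\R$ and reading off $S$ and $g$ from it.

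To carry this out, I first lift everything to $X\times\R$. Under the identification of Remark \ref{uwagarev1}, the flow $T^f_t$ is the quotient of $\sigma_t(x,r)=(x,r+t)$ by the $T_{-f}$-orbit equivalence, and $S_g$ is already defined so as to commute with $T_{-f}$ and $(\sigma_t)$. Two measurable maps on $X\times\R$ that commute with $T_{-f}$ induce the same map on $X^f=(X\times\R)/\equiv$ if and only if they agree pointwise up to a measurable $\Z$-valued power of $T_{-f}$. Applying this to $S_g$ and $\sigma_{t_0}$ yields a measurable function $k:X\times\R\to\Z$ with
\[
(Sx,r+g(x))=S_g(x,r)=T_{-f}^{k(x,r)}\sigma_{t_0}(x,r)=\bigl(T^{k(x,r)}x,\;r+t_0-f^{(k(x,r))}(x)\bigr)
\]
for $\mu\otimes\lambda_\R$-a.e.\ $(x,r)$.

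Comparing first coordinates shows that $k(x,r)$ in fact depends only on $x$; write $k(x)$. By Remark \ref{uw:aperiodic}, $T$ is aperiodic, so the orbit map $n\mapsto T^n x$ is injective a.e., and $k(x)$ is the unique integer satisfying $Sx=T^{k(x)}x$. Using $S\in C(T)$, we compute $S(Tx)=TSx=T^{k(x)+1}x$, whereas the defining equation at $Tx$ gives $S(Tx)=T^{k(Tx)+1}x$; another application of aperiodicity forces $k(Tx)=k(x)$. Ergodicity of $T$ then makes $k$ equal a.e.\ to a constant $k_0\in\Z$, so $S=T^{k_0}$, and the second coordinate of the displayed equation reads $g(x)=t_0-f^{(k_0)}(x)$, as required.

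The only delicate step is the lifting principle in the second paragraph: one must check that $k(x,r)$ can indeed be chosen measurably. This is routine since the map $(x,r)\mapsto S_g(x,r)-\sigma_{t_0}(x,r)$ is measurable and the $T_{-f}$-orbit is countable and discrete in the second coordinate (again using aperiodicity), so the unique $k$ landing $\sigma_{t_0}(x,r)$ back on the $T_{-f}$-orbit of $S_g(x,r)$ depends measurably on $(x,r)$; everything else is algebra combined with ergodicity.
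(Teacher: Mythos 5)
Your proposal is correct and follows essentially the same route as the paper: pass to $S_g\in C(T^f)$ via Remark~\ref{uwcent}, identify it with some $T^f_{t_0}$, lift to $X\times\R$ to get a measurable $\Z$-valued $k(x,r)$, and conclude by aperiodicity and ergodicity that $k$ is constant. The only cosmetic difference is that you deduce independence of $k$ from $r$ using the first coordinate and aperiodicity of $T$, whereas the paper uses the second coordinate and the injectivity of $k\mapsto f^{(k)}(x)$ coming from the strict positivity of $f$; both are valid.
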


\begin{proof}
By Remark~\ref{uwcent}, the automorphism
\[S_{g}:X\times\R\to X\times\R,\qquad S_g(x,r)=(Sx,r+g(x))\]
can be considered as an element of $C(T^f)$.
By assumption, there exists $t_0\in\R$ such that $S_g=T^f_{t_0}$ on $X^f$. Therefore,
there exists a measurable function $k:X\times\R\to\Z$ with
\[(Sx,r+g(x))=S_g(x,r)=T_{-f}^{k(x,r)}(x,r+t_0)=(T^{k(x,r)}x,r+t_0-f^{(k(x,r))}(x)),\]
so
\[Sx=T^{k(x,r)}\quad\text{ and }\quad g(x)=t_0-f^{(k(x,r))}(x)).\]
It follows that $k$ does not depend on the second coordinate,
i.e.\  $k(x,r)=k(x)$ (indeed, $f^{(k_1)}(x)\neq f^{(k_2)}(x)$
whenever $k_1\neq k_2$) and $Sx=T^{k(x)}x$. Thus
\[T^{1+k(x)}x=TSx=STx=T^{k(Tx)}(Tx)=T^{k(Tx)+1}x.\]
By the ergodicity of $T$, $k\circ T=k$ and hence $k$ is constant,
which proves our claim.
\end{proof}

\section{Joinings and non-reversibility}\label{sec:joinings}
In this section we will present a method of proving
non-reversibility by  studying the weak closure of off diagonal
self-joinings (of order at least 3).

\subsection{Self-joinings for ergodic flows}
Assume that $\mathcal{ T}=(T_t)_{t\in\R}$ is an ergodic flow on
$\xbm$. For any $k\geq 2$ by a \emph{ $k$-self-joining} of $\ct$
we mean any probability $(T_t\times\ldots\times
T_t)_{t\in\R}$-invariant measure $\lambda$ on $(X^k,
\mathcal{B}^{\otimes k})$ whose projections on all coordinates are
equal to $\mu$, i.e.\
\[\lambda(X\times\ldots\times X\times A_i\times X\times\ldots\times X) =\mu(A_i)\quad\text{ for any }1\leq i\leq k\text{ and }A_k\in\mathcal{B}. \]
We will denote by $J_k(\ct)$ the set of all $k$-self-joinings for
$\ct$. If the flow $(T_t\times\ldots\times T_t)_{t\in\R}$ on
$(X^k,\lambda)$  is ergodic then $\lambda$ is called an ergodic
$k$-joining.

Let $\{B_n:n\in\N\}$ be a countable family in $\mathcal{B}$
which is dense in $\mathcal{B}$ for the (pseudo-) metric
$d_{\mu}(A,B)=\mu(A\triangle B)$. Let us consider the metric $d$
on $J_k(\ct)$ defined by
\[d(\lambda,\lambda')=\sum_{n_1,\ldots,n_k\in\N}\frac{1}{2^{n_1+\ldots+n_k}}|\lambda(B_{n_1}\times \ldots\times B_{n_k})-\lambda'(B_{n_1}\times \ldots\times B_{n_k})|.\]
Endowed with corresponding to $d$ topology the set $J_k(\ct)$ is compact.

For any family $S_1,\ldots,S_{k-1}$ of elements of the centralizer
$C(\ct)$ we will denote by $\mu_{S_1,\ldots,S_{k-1}}$  the
$k$-joining determined by
\[\mu_{S_1,\ldots,S_{k-1}}(A_1\times\ldots\times A_{k-1}\times A_k)=
\mu(S_1^{-1}A_1\cap\ldots\cap S^{-1}_{k-1}A_{k-1}\cap A_k)\] for
all $A_1,\ldots, A_k\in\mathcal{B}$. Since
$\mu_{S_1,\ldots,S_{k-1}}$ is the image of the measure $\mu$ via
the map $x\mapsto(S_1x,\ldots,S_{k-1}x,x)$, this joining is
ergodic. When all $S_i$ are time $t_i$-automorphisms  of the flow,
then  $\mu_{S_1,\ldots,S_{k-1}}$ is called an {\em off-diagonal}
self-joining.

For any probability Borel measure $P\in\mathcal{P}(\R^{k-1})$ we
will deal with the measure
$\int_{\R^{k-1}}\mu_{T_{t_1},\ldots,T_{t_{k-1}}}dP(t_1,\ldots,t_{k-1})$
defined by
\[\int_{\R^{k-1}}\mu_{T_{t_1},\ldots,T_{t_{k-1}}}dP(t_1,\ldots,t_{k-1})(A):=
\int_{\R^{k-1}}\mu_{T_{t_1},\ldots,T_{t_{k-1}}}(A)dP(t_1,\ldots,t_{k-1})\]
for any $A\in\mathcal{B}^{\otimes k}$. Then
$\int_{\R^{k-1}}\mu_{T_{t_1},\ldots,T_{t_{k-1}}}\,dP(t_1,\ldots,t_{k-1})\in
J_k(\ct)$. In the following section we will provide a criterion of
having  such an integral self-joining in the weak closure of
off-diagonal joinings for some special flows.

Similarly, we also consider joinings between different ( ergodic)
flows, say $\cT=(T_t)_{t\in\R}$ and $\cS=(S_t)_{t\in\R}$.
Following \cite{Fu}, we say that $\cT$ and $\cS$ are {\em
disjoint} if $J(\cT,\cS)=\{\mu\otimes\nu)$. We write $\cT\perp
\cS$.

\subsection{Basic criterion of existence of integral joinings in the weak closure}
Let $G$ be a locally compact Abelian Polish group. Assume that
$\|\cdot\|$ is an F-norm inducing a translation invariant metric
$d$ on $G$. Denote by $\overline{G}$ the one-point
compactification of $G$. Assume moreover that $T:\xbm\to\xbm$ is
an ergodic automorphism and $F_n:X\to G$, $n\geq1$, is a sequence
of measurable functions such that
\begin{equation}\label{ll0}
(F_n)_\ast\mu\to P\in \mathcal{ P}(G)\end{equation}
$\ast$-weakly; $ \mathcal{ P}(G)$ stands for the set of probability Borel measures on $G$.
The following result is a natural generalization of Lemma~4.1 from
\cite{Fr-Le}.
\begin{pr}\label{lm:kocyklowy}
Under the above assumptions, suppose moreover that
\begin{equation}\label{eq:inmeasure}
F_n\circ T-F_n\to 0\;\;\text{in measure}.
\end{equation} Then for
each $\phi\in C(\overline{G})$, $h:X\to G$ measurable and $j\in
L^1\xbm$ we have
\[
\lim_{n\to\infty}\int_X\phi(F_n(x)+h(x))j(x)\,d\mu(x)=
\int_X\int_G \phi(g+h(x))j(x)\,dP(g)d\mu(x).
\]
\end{pr}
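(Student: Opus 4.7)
The plan is to reduce to the special case $h\equiv 0$ and $j=\mathbf{1}_A$, and then deduce that case from the asymptotic $T$-invariance~\eqref{eq:inmeasure} together with the pointwise ergodic theorem.

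\textbf{Reduction step.} Because $\overline{G}$ is compact, any $\phi\in C(\overline{G})$ is uniformly continuous. Consequently, if $h_m\to h$ in measure as $G$-valued functions, then $\phi(F_n+h_m)-\phi(F_n+h)\to 0$ in $L^1$, uniformly in $n$. Approximating $h$ by simple functions and splitting the integral over their level sets (absorbing each $\mathbf{1}_{A_i}$ into $j$) reduces the problem to $h\equiv g_0$ constant. The translate $\phi(\cdot+g_0)$ extends continuously to $\overline{G}$ by fixing $\infty$, because translation by $g_0$ is a homeomorphism of $G$ that preserves the filter of complements of compact sets; replacing $\phi$ by this translate reduces to $g_0=0$. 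Finally, density of simple functions in $L^1$ allows us to take $j=\mathbf{1}_A$ for an arbitrary $A\in\mathcal{B}$.

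\textbf{Core estimate.} It remains to prove that
\[
\int_A\phi\circ F_n\,d\mu\xrightarrow[n\to\infty]{}\mu(A)\int_G\phi\,dP.
\]
Uniform continuity of $\phi$ combined with~\eqref{eq:inmeasure} gives $\phi\circ F_n\circ T-\phi\circ F_n\to 0$ in measure, hence in $L^1$ by boundedness. Telescoping and $T$-invariance of $\mu$ yield, for each fixed $k\geq 1$,
\[
\|\phi\circ F_n\circ T^k-\phi\circ F_n\|_{L^1}\leq k\,\|\phi\circ F_n\circ T-\phi\circ F_n\|_{L^1}\xrightarrow[n\to\infty]{}0.
\]
Using the identity $\int_{T^kA}\phi\circ F_n\,d\mu=\int_A\phi\circ F_n\circ T^k\,d\mu$, averaging over $0\leq k<N$, and introducing the Birkhoff average $A_N:=\frac{1}{N}\sum_{k=0}^{N-1}\mathbf{1}_{T^kA}$, we obtain
\[
\left|\int_A\phi\circ F_n\,d\mu-\int A_N\cdot(\phi\circ F_n)\,d\mu\right|\leq N\,\|\phi\|_{\infty}^{0}\cdot\|\phi\circ F_n\circ T-\phi\circ F_n\|_{L^1},
\]
which vanishes as $n\to\infty$ for each fixed $N$. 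By the Birkhoff ergodic theorem applied to $T^{-1}$ (which is ergodic and preserves $\mu$), $A_N\to\mu(A)$ in $L^1$ as $N\to\infty$, while~\eqref{ll0}, tested against $\phi\in C(\overline{G})\subset C_b(G)$ (using that $P\in\mathcal{P}(G)$ assigns no mass to $\infty$), gives $\int\phi\circ F_n\,d\mu\to\int_G\phi\,dP$.

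\textbf{Main point.} The only delicate aspect is the order of limits: for fixed $N$ the telescoping error vanishes with $n$, while $A_N$ approximates $\mu(A)$ only as $N\to\infty$; choosing $N=N(\varepsilon)$ first so that $\|A_N-\mu(A)\|_{L^1}<\varepsilon$, and then taking $n$ large enough to make the telescoping error negligible, closes the argument since $\varepsilon$ is arbitrary. Beyond this, the structure mirrors Lemma~4.1 of~\cite{Fr-Le}; the only new ingredient is the reduction step that absorbs the auxiliary measurable translate $h$ via uniform continuity of $\phi$ on the compactification $\overline{G}$.
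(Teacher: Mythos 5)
Your argument is correct, but the core step is handled by a genuinely different mechanism than the paper's. Both proofs perform the same outer reductions: approximate $h$ by simple functions using uniform continuity of $\phi$ with respect to the invariant metric on $G$, absorb the level sets into $j$, and dispose of a constant $h$ by translating $\phi$ (the paper does this implicitly by applying the $h=0$ case to $\phi(\cdot+h_i)$). The divergence is in the case $h=0$: the paper splits $j$ into its mean plus a zero-mean part, approximates the latter in $L^1$ by coboundaries $k\circ T-k$ (dense in $L^1_0$ by ergodicity), and kills the coboundary term with a single application of~\eqref{eq:inmeasure} via the change of variables $x\mapsto Tx$ --- a one-shot limit in $n$. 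You instead take $j=\mathbf{1}_A$, telescope~\eqref{eq:inmeasure} into $\|\phi\circ F_n\circ T^k-\phi\circ F_n\|_{L^1}\leq k\varepsilon_n$, replace $\mathbf{1}_A$ by the Birkhoff average $A_N$, and invoke the ergodic theorem for $T^{-1}$; this costs a two-parameter limit ($N$ first, then $n$), which you order correctly in your final paragraph. The two mechanisms are dual uses of ergodicity (density of coboundaries in $L^1_0$ versus $L^1$-convergence of ergodic averages), so neither buys extra generality; the paper's version is slightly more economical since it avoids the interchange of limits. Two small points to tidy: the factor $\|\phi\|_{\infty}^{0}$ in your displayed telescoping bound is evidently a typo (the correct bound is $\tfrac{N-1}{2}\varepsilon_n\leq N\varepsilon_n$ with $\varepsilon_n:=\|\phi\circ F_n\circ T-\phi\circ F_n\|_{L^1}$), and in the reduction step, pairing $\phi(F_n+h_m)-\phi(F_n+h)$ with an unbounded $j\in L^1$ requires the uniform bound $2\|\phi\|_{\infty}$ together with uniform integrability of $j$ over the small exceptional set, not merely $L^1$-smallness of the difference --- this is exactly the estimate the paper writes out to establish~\eqref{ll3}.
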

\begin{proof}
We will first assume that $h=0$. In order to prove the  above weak
convergence we need to show that
\begin{equation}\label{ll1}
\lim_{n\to\infty}\int_X\phi(F_n(x))j(x)\,d\mu(x)=0
\end{equation}
for  each $j$ whose mean is zero. Now, since the functions of the
form $k\circ T-k$ with $k\in L^1\xbm$ are dense in the latter
subspace we need to show that
$\lim_{n\to\infty}\int_X\phi(F_n(x))(k(Tx)-k(x))\,d\mu(x)=0$. We
have
\begin{align*}
\int_X\phi(F_n(x))j(x)\,d\mu(x)&=\int_X\phi(F_n(x))k(Tx)\,d\mu(x)-\int_X\phi(F_n(Tx))k(Tx)\,d\mu(x)
\\
&=\int_X\left(\phi(F_n(x))-\phi(F_n(Tx))\right)k(Tx)\,d\mu(x).
\end{align*}
Now, since $\phi$ is uniformly continuous and bounded
and~(\ref{eq:inmeasure}) holds,~(\ref{ll1}) follows.

Suppose now that $h=\sum_{i=1}^mh_i\cdot{\mathbf 1}_{A_i}$ is a
simple function ($h_i\in G$ and the sets $A_i$, $1\leq i\leq m$
form a measurable partition of $X$). We have
\begin{multline*}
\int_X\phi(F_n(x)+h(x))j(x)\,d\mu(x)=\sum_{i=1}^m \int_X\phi(F_n(x)+h_i)j(x){\mathbf 1}_{A_i}(x)\,d\mu(x) \\
\to \sum_{i=1}^m\int_G\phi(g+h_i)\,dP(g)\int_X(j\cdot{\mathbf 1}_{A_i})(x)\,d\mu(x)\\
=\int_X\int_G\phi(g+h(x))j(x)\, dP(g)d\mu(x).
\end{multline*}
All we need to show now is that for each $\vep>0$
we can find a measurable $h_{\vep}:X\to G$ taking only finitely
many values so that
\begin{equation}\label{ll3}
\left|\int_X\phi(F_n(x)+h(x))j(x)\,d\mu(x)-
\int_X\phi(F_n(x)+h_{\vep}(x))j(x)\,d\mu(x)\right|<\vep
\end{equation}
and
\begin{equation}\label{ll4}
\left|\int_X\phi(g+h(x))j(x)\,d\mu(x)-
\int_X\phi(g+h_{\vep}(x))j(x)\,d\mu(x)\right|<\vep.
\end{equation}
Given $\vep>0$ we select $\delta>0$ so that
\[
\|g_1-g_2\|<\delta\;\;\Rightarrow\;\;|\phi(g_1)-\phi(g_2)|<\vep/
(2\|j\|_{L^1}).
\]
Then select $\eta>0$ so that whenever
$\mu(A)<\eta$
\[
\int_A|j(x)|\,d\mu(x)<\vep/(4\|\phi\|_{\infty}).
\]
Finally choose $h_{\vep}:X\to G$ measurable so that $h_{\vep}$
takes only finitely many values and
\[
\mu\left(\{x\in X:\:|h_{\vep}(x)-h(x)|\geq\delta\}\right)<\eta.
\]
We have
\begin{multline*}
\left|\int_X\phi(F_n(x)+h(x))j(x)\,d\mu(x)- \int_X\phi(F_n(x)+h_{\vep}(x))j(x)\,d\mu(x)\right| \\
\leq 2\int_{\{x\in X:\:\|h_{\vep}(x)-h(x)\|\geq\delta\}}\|\phi\|_\infty|j(x)|\,d\mu(x) \\
+\int_{\{x\in X:\:\|h_{\vep}(x)-h(x)\|<\delta\}}\frac{\vep}
{2\|j\|_{L^1}}|j(x)|\,d\mu(x)<\vep.
\end{multline*}
We established~(\ref{ll3})
and~(\ref{ll4}) follows in the same manner.
\end{proof}

\begin{lm}\label{lm:4.2}
For all $t_0,\dots, t_{d-1}\in\R$ and all  measurable sets
$A_0,\dots, A_d \subset X^f$ we have
\begin{equation*}
\mu^f\left(\bigcap_{i=0}^{d-1}(T^f)_{t_i}A_i\cap A_d \right)=
\sum_{k_0,k_1,\dots, k_{d-1}\in\Z}\mu \otimes \lambda_{\R}
\left( \bigcap_{i=0}^{d-1}\left((T_{-f})^{k_i}\sigma_{t_i}A_i \right)\cap A_d  \right).
\end{equation*}
Moreover, the sets
$\bigcap_{i=0}^{d-1}\left((T_{-f})^{k_i}\sigma_{t_i}A_i
\right)\cap A_d$, $(k_0,\dots,k_{d-1})\in\Z^d$ are pairwise
disjoint.
\end{lm}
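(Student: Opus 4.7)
The plan is to decompose each set $(T^f)_{t_i}A_i$ as a countable disjoint union indexed by $\Z$ and then distribute the intersection. The starting point is that $X^f$ is a fundamental domain for the free $\Z$-action $k\mapsto T_{-f}^{k}$ on $X\times\R$, as Remark~\ref{uwagarev1} records: every $T_{-f}$-orbit meets $X^f$ in exactly one point. Since $\sigma_{t}$ commutes with $T_{-f}$, each translate $\sigma_{t_i}X^f$ is also a fundamental domain, so for any measurable $A\subset X^f$ the family $\{T_{-f}^{k}\sigma_{t_i}A:k\in\Z\}$ consists of pairwise disjoint subsets of $X\times\R$.

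Next, using~\eqref{co0}, for any $(x,r)\in A_i$ there is a unique integer $k=k(x,r,t_i)$ with $(T^f)_{t_i}(x,r)=T_{-f}^{k}\sigma_{t_i}(x,r)\in X^f$. Grouping the points of $A_i$ by this integer yields
\[
(T^f)_{t_i}A_i=\bigsqcup_{k\in\Z}\bigl(T_{-f}^{k}\sigma_{t_i}A_i\bigr)\cap X^f.
\]
Intersecting over $i=0,\ldots,d-1$ together with $A_d\subset X^f$, and distributing the unions over the intersection, one obtains
\[
\bigcap_{i=0}^{d-1}(T^f)_{t_i}A_i\cap A_d=\bigsqcup_{(k_0,\ldots,k_{d-1})\in\Z^{d}}\;\bigcap_{i=0}^{d-1}\bigl(T_{-f}^{k_i}\sigma_{t_i}A_i\bigr)\cap A_d.
\]
Pairwise disjointness of the pieces on the right is inherited from the $d=1$ case: if two multi-indices differ in some coordinate $j$, then the corresponding summands are contained in the disjoint sets $T_{-f}^{k_j}\sigma_{t_j}A_j$ and $T_{-f}^{k'_j}\sigma_{t_j}A_j$.

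Since $\mu^{f}$ is the restriction of $\mu\otimes\lambda_{\R}$ to $X^f$ and the set on the left lies inside $X^f$, countable additivity of $\mu\otimes\lambda_{\R}$ applied to this disjoint decomposition produces the claimed identity. The argument is essentially bookkeeping; the only point needing care is verifying the disjointness of the pieces $T_{-f}^{k_i}\sigma_{t_i}A_i$ for different values of $k_i$, which in turn reduces to the observation that $\sigma$ and $T_{-f}$ commute, so that $\sigma_{t_i}X^f$ remains a fundamental domain for $T_{-f}$.
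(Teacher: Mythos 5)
Your proof is correct and follows essentially the same route as the paper: decompose each $(T^f)_{t_i}A_i$ as the disjoint union over $k\in\Z$ of $(T_{-f}^{k}\sigma_{t_i}A_i)\cap X^f$, distribute the unions over the intersection, and apply countable additivity of $\mu\otimes\lambda_{\R}$. The only (immaterial) difference is that you justify the disjointness in $k$ via the fundamental-domain property of $\sigma_{t_i}X^f$ for the $T_{-f}$-action, whereas the paper derives it from the injectivity of $(T^f)_{t_i}$ applied to the partition of $A_i$ according to the value of $k$; both arguments are valid.
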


\begin{proof}
Given $(t_0,\dots,t_{d-1})\in \R^d$ and $(x,r)\in X^f$,
\[
(T^f)_{t_i}(x,r)=(T_{-f})^{k_i}\sigma_{t_i}(x,r)\text{ for a unique }k_i\in \Z \text{ for }0\leq i\leq d-1.
\]
Hence if we fix $i\in\{0,\ldots,d-1\}$ then
$$
T^f_{t_i}(A_i)=\bigcup_{k\in\Z} T^k_{-f}\sigma_{t_i}(A_i)\cap
X^f.$$  The sets on the RHS of the above equality are pairwise
disjoint and they correspond to the images via $T^f_{t_i}$ of the
partition of $X^f$ into pairwise disjoint sets on which the action
of $T^f_{t_i}$ corresponds to $T^k_{-f}\sigma_{t_i}$, $k\in\Z$.
Therefore (remembering that $A_d\subset X^f$)
$$\bigcap_{i=0}^{d-1}T^f_{t_i}(A_i)\cap A_d=\bigcap_{i=0}^{d-1}\bigcup_{k_i\in\Z} T^k_{-f}\sigma_{t_i}(A_i)\cap A_d$$
$$=\bigcup_{k_0,k_1,\dots, k_{d-1}\in\Z}
\left( \bigcap_{i=0}^{d-1}\left((T_{-f})^{k_i}\sigma_{t_i}A_i \right)\cap A_d  \right).$$
It follows that the above representation corresponds
to the partition of the space $X^f$  into countably many subsets $X_{k_0,\dots,k_{d-1}}^f$,
$(k_0,\dots,k_{d-1})\in\Z^d$, on which, for each $i=0,\ldots,d-1$,  $(T^f)_{t_i}$ acts as
$(T_{-f})^{k_i}\sigma_{t_i}$. Moreover,
since $(T^f)_{t_i}$ is an automorphism, the images
$(T_{-f})^{k_i}\sigma_{t_i}\left(X^f_{k_0,\dots,k_{d-1}} \right)$
are pairwise disjoint for $(k_0,\dots,k_{d-1})\in\Z^d$ and the result follows.
\end{proof}

\begin{lm}\label{lm:4.3}
Suppose that $A_0,\dots,A_d\subset X\times \R$ are  measurable
rectangles of the form $A_i=B_i\times C_i$ for $0\leq i\leq d$.
Then
\begin{align*}
\mu\otimes \lambda_{\R} &\left(\bigcap_{i=0}^{d-1}\left( (T_{-f})^{k_i}A_i\right) \cap A_d\right) \\
&=\int_{\bigcap_{i=0}^{d-1}T^{k_i}B_i\cap B_d} \lambda_{\R} \left( \bigcap_{i=0}^{d-1} \left(C_i+f^{(-k_i)}(x) \right)\cap C_d \right) \ d\mu(x).
\end{align*}
\end{lm}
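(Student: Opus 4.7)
The plan is to do a direct unfolding of the intersection in the skew product, using an explicit formula for iterates of $T_{-f}$ and then Fubini's theorem.

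First I would establish, by induction on $|k|$, the formula
\[T_{-f}^k(x,r)=(T^kx,\,r-f^{(k)}(x))\quad\text{for every }k\in\Z,\]
using the case-by-case definition of $f^{(k)}$ given in Section~\ref{specialflow}. Since $T_{-f}^k$ is an automorphism of $(X\times\R,\mu\otimes\lambda_\R)$, I can rewrite the preimage description as a direct image: setting $y=T^kx$, one has $x=T^{-k}y$ and
\[r=s+f^{(k)}(T^{-k}y)=s-f^{(-k)}(y),\]
where the identity $f^{(k)}(T^{-k}y)=-f^{(-k)}(y)$ follows at once from the definition of $f^{(k)}$. Consequently
\[T_{-f}^{k_i}(B_i\times C_i)=\bigl\{(y,s)\in X\times\R\,:\,y\in T^{k_i}B_i,\;s\in C_i+f^{(-k_i)}(y)\bigr\}.\]

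Next I would intersect these sets over $i=0,\dots,d-1$ together with $A_d=B_d\times C_d$. Because the first coordinate condition and the second coordinate condition decouple (the $y$-condition depends only on $y$, and the $s$-condition depends on $y$ and $s$ but is otherwise a condition on $s$ alone once $y$ is fixed), one obtains
\[\bigcap_{i=0}^{d-1}(T_{-f})^{k_i}A_i\cap A_d=\bigl\{(y,s):y\in\textstyle\bigcap_{i=0}^{d-1}T^{k_i}B_i\cap B_d,\;s\in\bigcap_{i=0}^{d-1}(C_i+f^{(-k_i)}(y))\cap C_d\bigr\}.\]
Applying Fubini to integrate the indicator of this set against $\mu\otimes\lambda_\R$, the $s$-integral gives the $\lambda_\R$-measure of the fiber, and the $y$-integral is over the set $\bigcap_{i=0}^{d-1}T^{k_i}B_i\cap B_d$, yielding exactly the formula in the statement.

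The only delicate point is the bookkeeping with $f^{(k)}$ for negative $k$, in particular the identity $f^{(k)}(T^{-k}\cdot)=-f^{(-k)}(\cdot)$ that produces the shift by $+f^{(-k)}(y)$ in the fiber condition; everything else is a measurable change of variables followed by Fubini. No additional hypotheses on $T$ or $f$ beyond measurability and integrability are required for this step.
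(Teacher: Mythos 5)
Your proposal is correct and follows essentially the same route as the paper: compute the image of the rectangle under $(T_{-f})^{k_i}$ via the explicit iterate formula, characterize the intersection coordinatewise, use the identity $f^{(m)}(T^{-m}x)=-f^{(-m)}(x)$ to convert the fiber condition into a shift by $+f^{(-k_i)}$, and conclude by Fubini. No gaps.
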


\begin{proof}
We have $(x,r) \in \bigcap_{i=0}^{d-1}(T_{-f})^{k_i}(B_i\times C_i)\cap B_d\times C_d$
if and only if
\[
(x,r)=(T^{k_i}y_i,r_i-f^{(k_i)}(y_i))\quad\text{ with }\quad (y_i,r_i)\in B_i\times C_i
\]
for $0\leq i \leq d-1$ and $(x,r)\in B_d\times C_d$. Thus
\[
(x,r) \in \bigcap_{i=0}^{d-1}(T_{-f})^{k_i}(B_i\times C_i)\cap B_d\times C_d
\]
if and only if
\[
x\in \bigcap_{i=0}^{d-1}T^{k_i}B_i \cap B_d\quad\text{ and  }\quad
r\in \bigcap_{i=0}^{d-1}\left( C_i -
f^{(k_i)}(T^{-k_i}x)\right)\cap C_d.
\]
Since $f^{(m)}(T^{-m}x)=-f^{(-m)}(x)$ for any $m\in\Z$, the result
follows.
\end{proof}

As an immediate consequence of (the second part of) Lemma~\ref{lm:4.2} and
Lemma~\ref{lm:4.3} we obtain the following result.
\begin{uw}\label{wn:wn}
For fixed $t_0,\dots,t_{d-1}\in\R$ and $k_{i_0}\in\Z$ with $0\leq
i_0\leq d-1$ and for all measurable sets $A_i\subset X^f$ of the
form $A_i=B_i\times C_i$ where $0\leq i\leq d$ we
have\footnote{Here and in what follows $ \sum_{k_j\in\Z,j\neq
i_0}$ means
$\sum_{k_0,\ldots,k_{i_0-1},k_{i_0+1},\ldots,k_{d-1}}$.}
\begin{multline*}
    \sum_{k_j\in\Z,j\neq i_0}  \mu\otimes \lambda_{\R} \left(\bigcap_{i=0}^{d-1}(T_{-f})^{k_i}\sigma_{t_i}A_i \cap A_d \right)
    \leq \mu\otimes \lambda_{\R}\left((T_{-f})^{k_{i_0}}\sigma_{t_{i_0}}A_{i_0}\cap A_d \right)\\
    =\int_{T^{k_{i_0}}B_{i_0}\cap B_d}\lambda_{\R} \left( (C_{i_0}+t_{i_0}+f^{(-k_{i_0})}(x))\cap C_d\right)\ d\mu(x)\\
    \leq \int_X \lambda_{\R} \left(\left(C_{i_0}+t_{i_0}+f^{(-k_{i_0})}(x)\right)\cap C_d \right) \ d\mu(x).
\end{multline*}
\end{uw}

Suppose that $f\in L^2(X,\mu)$ and $(q_n)_{n\in\N}$ is a
sequence of integer numbers such that the sequence
$(f_0^{(q_n)})_{n\in\N}$ is bounded in $L^2(X,\mu)$, where
$f_0:=f-\int f\,d\mu$.

\begin{lm}[Lemma~4.4 in \cite{Fr-Le}]\label{lm:fl04}
For every pair of bounded sets $D,E\subset \R$ there exists a
sequence $(a_k)_{k\in\Z}$ of positive numbers such that
\begin{itemize}
\item
$\sum_{k\in\Z}a_k<+\infty$,
\item
$\int_X \lambda_{\R}\left((D-f_0^{(q_n)}(x)+f^{(k)}(x)) \cap E
\right)\ d\mu\leq a_k$ for each $n\in\N$ and $k\in\Z$.
\end{itemize}
\end{lm}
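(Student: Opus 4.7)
My plan is to bound
\[I(k,n):=\int_X\lambda_\R\bigl((D-f_0^{(q_n)}(x)+f^{(k)}(x))\cap E\bigr)\,d\mu(x)\]
uniformly in $n\in\N$ by a summable sequence $(a_k)_{k\in\Z}$, via a Chebyshev estimate on $f_0^{(q_n)}$. Two ingredients will enter: the hypothesis $C:=\sup_n\|f_0^{(q_n)}\|_2<\infty$, and the positivity of the essential infimum $f_{\min}>0$ of $f$---a standing property of the roof functions considered in the paper.

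First I would fix $M>0$ with $D\cup E\subset[-M,M]$. Since $(D+s)\cap E$ is empty for $|s|>2M$ and has Lebesgue measure at most $|E|$ in any case,
\[
I(k,n)\leq |E|\cdot\mu\bigl\{x:|f^{(k)}(x)-f_0^{(q_n)}(x)|\leq 2M\bigr\}.
\]
From $f\geq f_{\min}$ a.e.\ one has the pointwise bound $|f^{(k)}(x)|\geq|k|f_{\min}$ for every $k\in\Z$, so on the event above the triangle inequality forces $|f_0^{(q_n)}(x)|\geq|k|f_{\min}-2M$; for $|k|\geq 4M/f_{\min}$ this is at least $|k|f_{\min}/2$. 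Chebyshev then produces
\[
\mu\bigl\{|f_0^{(q_n)}|\geq |k|f_{\min}/2\bigr\}\leq \frac{4\|f_0^{(q_n)}\|_2^2}{k^2 f_{\min}^2}\leq \frac{4C^2}{k^2 f_{\min}^2},
\]
uniformly in $n$, so $I(k,n)\leq 4C^2|E|/(k^2 f_{\min}^2)$ in this range. For the finitely many remaining $k$ one uses the trivial bound $I(k,n)\leq|E|$. Setting $a_k:=|E|$ for $|k|<4M/f_{\min}$ and $a_k:=4C^2|E|/(k^2 f_{\min}^2)$ otherwise yields the desired sequence, with $\sum_k a_k<\infty$ by comparison with $\sum 1/k^2$.

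The real subtlety is $n$-uniformity: a Fubini-type argument (using that the shifts $s_k(x)=f^{(k)}(x)-f_0^{(q_n)}(x)$ are strictly increasing in $k$ with gaps $\geq f_{\min}$) shows that $\sum_k I(k,n)$ is bounded \emph{for each} $n$, but that does not imply $\sum_k\sup_n I(k,n)<\infty$. The hypothesis $f_{\min}>0$ is what converts the $L^2$-Chebyshev bound on $f_0^{(q_n)}$ into the $O(1/k^2)$ tail, uniformly in $n$; without it only $\|f_0^{(k)}\|_2=O(|k|)$ would be available, and Chebyshev would yield a non-summable $O(1)$ estimate. That is the main obstacle, and it is removed precisely by the positivity of $\mathrm{ess\,inf}\,f$, which holds for all classes of roof functions (step functions of von Neumann type, analytic positive functions on $\T$, piecewise polynomials on $[0,\beta)\cup[\beta,1)$) appearing in the paper.
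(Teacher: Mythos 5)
The paper itself gives no proof of this lemma---it is quoted verbatim as Lemma~4.4 of \cite{Fr-Le}---and your Chebyshev argument is precisely the standard one: reduce $I(k,n)$ to $\lambda_{\R}(E)\cdot\mu\{|f^{(k)}-f_0^{(q_n)}|\leq 2M\}$, use the linear growth of $|f^{(k)}|$ to force $|f_0^{(q_n)}|\geq |k|f_{\min}/2$ there, and convert the uniform $L^2$ bound on $(f_0^{(q_n)})_n$ into an $O(1/k^2)$ tail; your diagnosis of why the naive bound $\|f_0^{(k)}\|_2=O(|k|)$ fails is also the right one. The single caveat concerns the hypothesis you invoke: $\operatorname{ess\,inf}f>0$ is \emph{not} a standing assumption of the paper (in Section~\ref{specialflow} the roof is merely a strictly positive $L^1$ function, and for a piecewise absolutely continuous $f$ the infimum can be $0$). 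What is actually assumed where the lemma is applied (Proposition~\ref{main1}) is the weaker condition $f^{(k)}\geq ck$ a.e.\ for all $k\in\N$ large enough, say $k\geq k_0$; since $f^{(k)}(x)=-f^{(-k)}(T^{k}x)$ for $k<0$, this still gives $|f^{(k)}|\geq c|k|$ a.e.\ for all $|k|\geq k_0$, and your estimate goes through verbatim with $c$ in place of $f_{\min}$ and the exceptional (trivially bounded) range of indices enlarged to $|k|<\max(k_0,4M/c)$. So the proof is correct once that hypothesis is cited in place of a pointwise lower bound on $f$.
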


\begin{uw}\label{uw:uw}
For any $l_1,l_2\in\Z$ we have
\begin{align*}
f^{(l_1+l_2)}(x)-l_1=f^{(l_1)}(x)-l_1+f^{(l_2)}(T^{l_1}x)
=f_0^{(l_1)}(x)+f^{(l_2)}(T^{l_1}x).
\end{align*}
\end{uw}

\begin{pr}\label{main1}
Suppose that $f\in L^2(X,\mu)$ is a positive function with $\int_X
f\, d\mu=1$ and there exists $c>0$ such that $f^{(k)}\geq ck$ for
a.a. $x\in X$ and for all $k\in \N$ large enough. Let
$(q_n^i)_{n\geq 1}$ be rigidity sequences for~$T$ for $0\leq i\leq
d-1$. Moreover, suppose that the sequences $\left(f_0^{(q_n^i)}
\right)_{n\geq 1}$ are bounded in $L^2(X,\mu)$  for $0\leq i\leq
d-1$ and
\begin{equation}\label{eq:weakconvmeas}
\left(f_0^{(q_n^0)},\dots,f_0^{(q_n^{d-1})}\right)_\ast (\mu) \to P \text{ weakly in
}\mathcal{P}(\mathbb{R}^d).
\end{equation}
Then
\begin{equation}\label{eq:teza}
\left(\mu^f\right)_{T^f_{q^0_n},\ldots,T^f_{q^{d-1}_n}}
\to \int_{\R} \left(\mu^f\right)_{T^f_{t_0},\ldots,T^f_{t_{d-1}}}\,dP(t_0,\dots,t_{d-1}).
\end{equation}
\end{pr}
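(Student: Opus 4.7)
My plan is to reduce the statement to a computation on rectangles and apply Proposition~\ref{lm:kocyklowy} term-by-term after a suitable reparametrization of the jumps. It suffices to verify convergence of both sides of~\eqref{eq:teza} evaluated on $A_0\times\cdots\times A_d$ for arbitrary product rectangles $A_i=B_i\times C_i\subset X^f$ with $B_i\in\mathcal B$ and $C_i\subset\R$ a bounded interval. Lemmas~\ref{lm:4.2} and~\ref{lm:4.3} (applied with $t_i=-q_n^i$, since $(\mu^f)_{T^f_{q_n^i}}$ evaluates via pull-backs) decompose the LHS as
\[
\sum_{\vec k\in\Z^d}\int_{\bigcap_i T^{k_i}B_i\cap B_d}\lambda_\R\!\Bigl(\bigcap_{i=0}^{d-1}(C_i-q_n^i+f^{(-k_i)}(x))\cap C_d\Bigr)d\mu(x).
\]
Since $\int f\,d\mu=1$ and $f^{(k)}\ge ck$ force $f^{(-k)}(x)\approx -k$, the integrand can be non-trivial only when $k_i\approx -q_n^i$, so I reparametrize $k_i=-q_n^i+l_i$ with $l_i\in\Z$. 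Remark~\ref{uw:uw} yields $-q_n^i+f^{(-k_i)}(x)=f_0^{(q_n^i)}(x)+f^{(-l_i)}(T^{q_n^i}x)$.

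For each fixed $\vec l$, the corresponding summand then takes the form $\int_X j_n(x)\,\phi(F_n(x)+h_n(x))\,d\mu(x)$, where $F_n:=(f_0^{(q_n^0)},\ldots,f_0^{(q_n^{d-1})}):X\to\R^d$, $h_n:=(f^{(-l_i)}\circ T^{q_n^i})_{i=0}^{d-1}$, $j_n:=\mathbf 1_{B_d}\prod_i(\mathbf 1_{T^{l_i}B_i}\circ T^{q_n^i})$, and $\phi(\vec s):=\lambda_\R(\bigcap_i(C_i+s_i)\cap C_d)$. Since $\phi$ is bounded, uniformly continuous, and of bounded support (each $C_i$ being bounded), it extends continuously to $C(\overline{\R^d})$. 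Coordinatewise, $F_n\circ T-F_n\to 0$ in measure follows from rigidity of $(q_n^i)$ and $f_0\in L^2$, and $(F_n)_*\mu\to P$ is assumed in~\eqref{eq:weakconvmeas}. Rigidity also gives $h_n\to h:=(f^{(-l_i)})_i$ in measure and $j_n\to j:=\mathbf 1_{B_d\cap\bigcap_iT^{l_i}B_i}$ in $L^1$; uniform continuity of $\phi$ together with bounded convergence permits replacing $h_n,j_n$ by their limits at vanishing cost. Proposition~\ref{lm:kocyklowy} then delivers the fixed-$\vec l$ limit
\[
\int_{\R^d}\!dP(\vec g)\int_{B_d\cap\bigcap_iT^{l_i}B_i}\!\lambda_\R\!\Bigl(\bigcap_i(C_i+g_i+f^{(-l_i)}(x))\cap C_d\Bigr)d\mu(x).
\]
Summing over $\vec l$ and running Lemma~\ref{lm:4.2} in reverse on the inner integrand reassembles this into the evaluation of $\int(\mu^f)_{T^f_{t_0},\ldots,T^f_{t_{d-1}}}\,dP(\vec t)$ on $A_0\times\cdots\times A_d$ (up to the natural $\vec g\mapsto -\vec t$ sign change inherent in the off-diagonal convention).

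The principal obstacle is justifying the interchange of $\lim_n$ with $\sum_{\vec l\in\Z^d}$. To that end, fix a coordinate $i_0$ and apply Remark~\ref{wn:wn} to dominate the $\vec k$-term by $\int_X\lambda_\R((C_{i_0}-q_n^{i_0}+f^{(-k_{i_0})}(x))\cap C_d)\,d\mu(x)$. The alternative cocycle splitting $f^{(q_n^{i_0}-l_{i_0})}(x)=f^{(-l_{i_0})}(x)+f^{(q_n^{i_0})}(T^{-l_{i_0}}x)$, the measure-preserving substitution $y=T^{-l_{i_0}}x$, the identity $f^{(-l_{i_0})}(T^{l_{i_0}}y)=-f^{(l_{i_0})}(y)$, and the symmetry of $\lambda_\R$ under negation rewrite this bound in the form covered by Lemma~\ref{lm:fl04}, which supplies a summable majorant $(a_{l_{i_0}})_{l_{i_0}\in\Z}$ independent of $n$. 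Doing this coordinatewise yields a summable majorant for the whole $\Z^d$-sum, so dominated convergence validates the interchange and completes the proof. The hypotheses $f^{(k)}\ge ck$ and the $L^2$-boundedness of $f_0^{(q_n^i)}$ are essential at exactly this step: they are what powers Lemma~\ref{lm:fl04} and thus the required $\ell^1$-decay in $\vec l$.
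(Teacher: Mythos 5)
Your proof is correct and follows essentially the same route as the paper's: the same reduction to rectangles and decomposition via Lemmas~\ref{lm:4.2} and~\ref{lm:4.3}, the same reparametrization of the jump indices through Remark~\ref{uw:uw}, an application of Proposition~\ref{lm:kocyklowy} for each fixed jump vector (the paper absorbs the correction $f^{(k_i)}\circ T^{q_n^i}-f^{(k_i)}$ into $F_n$ where you put it into a varying $h_n$ and pass to the limit separately, a cosmetic difference), and the same tail control via Remark~\ref{wn:wn} combined with Lemma~\ref{lm:fl04}. One small point of precision: the coordinatewise bounds do not literally produce a single summable majorant indexed by $\Z^d$ (the minimum of the $d$ coordinatewise majorants need not be summable over $\Z^d$), but they do give tails $\sum_{\max_i|l_i|>M}$ that are small uniformly in $n$, which is exactly what the paper's argument uses and is all that the interchange of $\lim_n$ with the sum requires; your sign remark at the end likewise matches the paper's own off-diagonal convention.
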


\begin{uw}
Before we pass to the proof let us see the assertion of the
proposition in case of the suspension flow, i.e.\ $f=1$, that is,
$f_0=0$. In this case $P$ is the Dirac measure at zero of $\R^d$,
so in~\eqref{eq:teza} we have a convergence to the diagonal
$(d+1)$-self-joining $\Delta_{d+1}$. This can be see directly in
view of~\eqref{sztywnosczawieszenia}; indeed, all sequences
$(q_n^i)$ are rigidity sequences for the suspension flow and hence
yield convergence of the LHS  in~\eqref{eq:teza} to
$\Delta_{d+1}$. It follows that Proposition~\ref{main1} provides a
class of (measurable) change of times of the suspension flow, so
that the LHS in~(\ref{eq:teza}) weakly converges to the integral
of off-diagonal $(d+1)$-self-joinings given by the limit
distribution in~(\ref{eq:weakconvmeas}).

If $T$ is rigid and reversible,  then so is its suspension. We
will see later the the changes of time  described in
Proposition~\ref{main1} may lead to non-reversible flows.
\end{uw}

\begin{proof}
First notice that all we  need to show is that~\eqref{eq:teza}
holds for all measurable rectangles $A_i\subset X^f$ of the form
$A_i=B_i\times C_i$ ($0\leq i \leq d$) such that $C_i$ are bounded
for $0\leq i \leq d$.

Setting
\begin{equation*}
a_{k_0,\dots,k_{d-1}}^n:=\mu\otimes
\lambda_{\R}\left(\bigcap_{i=0}^{d-1}\left((T_{-f})^{-k_i}(T_{-f})^{-q_n^i}
\sigma_{-q_n^i}A_i\right)\cap A_d \right)
\end{equation*}
for $n\in \N$, $k_0,\dots,k_{d-1}\in \Z$, by Lemma~\ref{lm:4.2},
we have
\begin{equation}\label{eq:sumaas}
    \mu^f\left(\bigcap_{i=0}^{d-1}(T^f)_{-q_n^i}A_i\cap A_d \right)
    =\sum_{k_0,\dots,k_{d-1}\in \Z}a_{k_0,\dots,k_{d-1}}^n.
\end{equation}
Since $\sigma_{-q_n^i}(A_i)=B_i\times(C_i-q_n^i)$, using
Lemma~\ref{lm:4.3} and Remark~\ref{uw:uw}, we obtain
\begin{align}\label{eq:akl}
\begin{split}
    &a_{k_0,\dots,k_{d-1}}^n
    \!=\!\int_{\bigcap_{i=0}^{d-1}T^{-k_i-q_n^i}B_i\cap B_d}\!\lambda_{\R}
    \Big(\!\bigcap_{i=0}^{d-1}\!\big(\!C_i-q_n^i+f^{(k_i+q_n^i)}(x)\!\big)\!\cap C_d \!\Big) d\mu(x)\\
    &=\!\int_{\bigcap_{i=0}^{d-1}T^{-k_i-q_n^i}B_i\cap B_d}
    \!\lambda_{\R} \Big(\!\bigcap_{i=0}^{d-1}\!\big(\!C_i+f_0^{(q_n^i)}(x)+f^{(k_i)}(T^{q_n^i}x)\! \big)\! \cap C_d\!\Big) d\mu(x).
\end{split}
\end{align}
Using again  Remark~\ref{uw:uw}, for all
$n\in \N$, $k_0,\dots,k_{d-1}\in \Z$  we have
\begin{align}\label{eq:4b}
\begin{split}
&b_{k_0,\dots,k_{d-1}}^n:=\!\int_{\bigcap_{i=0}^{d-1}T^{-k_i}B_i\cap
B_d}\!\lambda_{\R} \!
\left(\!\bigcap_{i=0}^{d-1}\!\left(\!C_i-q_n^i+f^{(k+q_n^i)}(x)\!\right)\!\cap C_d \!\right)\! d\mu(x)\\
&=\!\int_{\bigcap_{i=0}^{d-1}T^{-k_i}B_i\cap B_d}\! \lambda_{\R}\!
\left(\!\bigcap_{i=0}^{d-1}\!\left(\!C_i\!+\!f_0^{(q_n^i)}(x)\!+\!f^{(k_i)}(T^{q_n^i}x)
\!\right)\!\cap C_d \!\right)\! d\mu(x).
\end{split}
\end{align}
We claim that
\begin{equation}\label{eq:poje}
\lim_{n\to\infty}\left(a_{k_0,\dots,k_{d-1}}^n-b_{k_0,\dots,k_{d-1}}^n
\right)=0\quad\text{ for all }\quad  k_0,\dots,k_{d-1}\in \Z.
\end{equation}
Notice that in formulas~\eqref{eq:akl} and~\eqref{eq:4b}
describing $a_{k_0,\dots,k_{d-1}}^n$ and $b_{k_0,\dots,k_{d-1}}^n$
respectively we have
\[
\psi_n(x):=\lambda_{\R}\left(\bigcap_{i=0}^{d-1}\left(C_i-q_n^i+f^{(k_i+q_n^i)}(x)\right)\cap
C_d \right) \leq \lambda_{\R}(C_d).
\]
Therefore,
\begin{align*}
\big|a_{k_0,\dots,k_{d-1}}^n-b_{k_0,\dots,k_{d-1}}^n&\big|=
\Big|\int_{\bigcap_{i=0}^{d-1}T^{-k_i-q_n^i}B_i\cap
B_d}\psi_n\,d\mu-\int_{\bigcap_{i=0}^{d-1}T^{-k_i}B_i\cap
B_d}\psi_n\,d\mu\Big|\\
&\leq\lambda_{\R}(C_d)\
\mu\Big(\Big(\bigcap_{i=0}^{d-1}T^{-k_i-q_n^i}B_i\cap
B_d\Big)\triangle \Big(\bigcap_{i=0}^{d-1}T^{-k_i}B_i\cap
B_d\Big)\Big)\\&\leq\lambda_{\R}(C_d)\sum_{i=0}^{d-1}\mu(T^{q_n^i}B_i\triangle
B_i).
\end{align*}
and $(q_n^i)_{n\geq 1}$ for $0\leq i\leq d-1$ are rigidity
sequences for $T$, this gives \eqref{eq:poje}.

Let $\varepsilon>0$ and fix $0\leq i_0\leq d-1$ and
$k_{i_0}\in\Z$.
By Remark~\ref{wn:wn}  and Remark~\ref{uw:uw}, for any $n\in\N$ we
have
\begin{align*}
    \sum_{k_j\in\Z,j\neq i_0}\!a_{k_0,\dots,k_{d-1}}^n &\leq \int_X
    \lambda_{\R}\!\left( \left(C_{i_0}-q_n^{i_0}+f^{(k_{i_0}+q_n^{i_0})}(x)
    \right)\cap C_d\right) d\mu(x)
    \\
    &=\int_X \!\lambda_{\R}\left(\left( C_{i_0}+f_0^{(q_n^{i_0})}(x)+f^{(k_{i_0})}(T^{q_n^{i_0}}x)\right)\cap C_d\right)  d\mu(x)\\
    &=\int_X \!\lambda_{\R}\left(\left( C_{i_0}-f_0^{(-q_n^{i_0})}(x)+f^{(k_{i_0})}(x)\right)\cap C_d\right)  d\mu(x).
\end{align*}
Therefore, by Lemma~\ref{lm:fl04}, there exists $M>0$ such that
for  any $0\leq i_0\leq d-1$ and $n\in \N$
\begin{equation*}
    \sum_{|k_{i_0}|>M}\sum_{k_j\in\Z, j\neq i_0} a_{k_0,\dots, k_{d-1}}^n<\frac{\varepsilon}{4d}.
\end{equation*}
It follows that
\begin{equation}\label{eq:sumaazmaxem}
\sum_{\max(|k_0|,\dots,|k_{d-1}|)>M} a_{k_0,\dots,k_{d-1}}^n\leq
\sum_{0\leq i_0\leq d-1} \sum_{|k_{i_0}|>M}\sum_{k_j \in\Z,j\neq
i_0}a_{k_0,\dots,k_{d-1}}^n\leq \vep/4.
\end{equation}

Let us consider $F_n:X\to\R^d$, $F_n(x)= (F^0_n(x),\ldots,
F^{d-1}_n(x))$ with
\begin{align*}
    F^i_n(x)=f_0^{(q_n^i)}(x)+f^{(k_i)}(T^{q_n^i}x)-f^{(k_i)}(x)\quad\text{ for
    }\quad i=0,\ldots,d-1
\end{align*}
and $(k_0,\ldots,k_{d-1})$ fixed.
Since $(q_n^i)_{n\geq 1}$ is a rigidity sequence for $T$,
$f^{(k_i)}\circ T^{q_n^i}-f^{(k_i)}$ tends to zero in measure when $n\to\infty$
for every $i=0,\ldots,d-1$. Therefore, \eqref{eq:weakconvmeas}
implies  $(F_n)_*\mu\to P$ weakly in $\mathcal{P}(\mathbb{R}^d)$.
Moreover,
\[F^i_n\circ T-F^i_n=(f\circ T^{k_i})\circ T^{q_n^i}-(f\circ T^{k_i}) \quad \text{ for  }\quad i=0,\ldots,d-1,\]
so $F_n\circ T-F_n\to 0$ in measure. Now using
Proposition~\ref{lm:kocyklowy} with $G=\R^d$ and \eqref{eq:4b} we
obtain
\begin{align}\label{eq:przedost}
\begin{split}
&b^n_{k_0,\dots,k_{d-1}}\\
&=\int_{  \bigcap_{i=0}^{d-1}T^{-k_i}B_i
\cap B_d}\!\lambda_{\R} \Big(\bigcap_{i=0}^{d-1}\big(C_i +
f_0^{(q_n^i)}(x) + f^{(k_i)}(T^{q_n^i}x)\big)\cap C_d
\Big) d\mu(x)
    \\
& =\int_{  \bigcap_{i=0}^{d-1}T^{-k_i}B_i \cap B_d}\!\lambda_{\R}
\Big(\bigcap_{i=0}^{d-1}\big(C_i + F^i_n(x) + f^{(k_i)}(x)\big)\cap
C_d \Big) d\mu(x)
\\& \to\! \int_{\bigcap_{i=0}^{d-1}T^{-k_i}B_i \cap B_d} \int_{\R^d}\!\lambda_{\R}
\Big(\bigcap_{i=0}^{d-1}\big(\!C_i\!+\!t_i\!+\! f^{(k_i)}(x)\!
\big)\!\cap\!C_d\!\Big) dP(t_0,\dots,t_{d-1})d\mu(x)\\
&\quad =:c_{k_0,\dots,k_{d-1}}
\end{split}
\end{align}
for each $k_0,\dots, k_{d-1}\in\Z$. By Fubini's theorem and
Lemma~\ref{lm:4.3} we have
\begin{align}\label{eq:przedost1}
\begin{split}
    &c_{k_0,\dots,k_{d-1}}\\
    &=\int_{\R^d} \int_{\bigcap_{i=0}^{d-1}T^{k_i}B_i\cap
    B_d}\!\lambda_{\R}
    \Big(\!\bigcap_{i=0}^{d-1}\!\big( C_i\!+\!t_i\!+\!f^{(k_i)}(x) \big)\!\cap\! C_d \Big) d\mu(x)\,dP(t_0,\dots,t_{d-1})\\
    &=\int_{\R^d} \mu\otimes \lambda_{\R} \Big( \bigcap_{i=0}^{d-1}(T_{-f})^{-k_i}\sigma_{t_i}(B_i\times C_i) \cap(B_d\times C_d) \Big)
    dP(t_0,\dots,t_{d-1})\\
    &=\int_{\R^d} \mu\otimes \lambda_{\R} \big( \bigcap_{i=0}^{d-1}(T_{-f})^{-k_i}\sigma_{t_i}A_i \cap A_d \big)
    dP(t_0,\dots,t_{d-1}).
\end{split}
\end{align}
Moreover, by Lemma~\ref{lm:4.2},
\begin{align}\label{eq:przedost2}
\begin{split}
&\sum_{k_0,\dots,k_{d-1}\in\Z}c_{k_0,\dots,k_{d-1}}\\
 &   =\sum_{k_0,\dots, k_{d-1}\in\Z} \int_{\R^d }\!\mu\!\otimes\!\lambda_{\R}\big(
    \bigcap_{i=0}^{d-1}(T_{-f})^{-k_i}\sigma_{t_i}A_i\cap A_d\big) dP(t_0,\dots,t_{d-1})\\
   & =\int_{\R^d} \mu^f \Big(\bigcap_{i=0}^{d-1}(T^f)_{t_i}A_i\cap A_d \Big) dP(t_0,\dots,t_{d-1}).
\end{split}
\end{align}
Increasing $M$, if necessary, we can assume that
\begin{equation}\label{eq:sumaczmaxem}
\sum_{\max(|k_0|,\dots,|k_{d-1}|)>M} c_{k_0,\dots,k_{d-1}}\leq
\vep/4.
\end{equation}
Combining \eqref{eq:poje} with \eqref{eq:przedost} we get
\[a_{k_0,\dots,k_{d-1}}^n\to c_{k_0,\dots,k_{d-1}}\quad \text{ for all }\quad k_0,\dots,k_{d-1}\in\Z.\]
Therefore, there exists $N\in\N$ such that for all $n\geq N$ and
$k_0,\dots,k_{d-1}\in\Z$ with $\max(|k_0|,\dots,|k_{d-1}|)\leq M$
\[|a_{k_0,\dots,k_{d-1}}^n-c_{k_0,\dots,k_{d-1}}|<\frac{\vep}{2(2M+1)^d}.\]
In view of \eqref{eq:sumaazmaxem} and \eqref{eq:sumaczmaxem}, it
follows that
\begin{multline*}
\Big|\sum_{k_0,\dots,k_{d-1}\in\Z}a_{k_0,\dots,k_{d-1}}^n-
\sum_{k_0,\dots,k_{d-1}\in\Z}c_{k_0,\dots,k_{d-1}}\Big|\\ \leq
\sum_{\max(|k_0|,\dots,|k_{d-1}|)>M} a^n_{k_0,\dots,k_{d-1}}
+\sum_{\max(|k_0|,\dots,|k_{d-1}|)>M} c_{k_0,\dots,k_{d-1}}\\+
\sum_{\max(|k_0|,\dots,|k_{d-1}|)\leq
M}|a_{k_0,\dots,k_{d-1}}^n-c_{k_0,\dots,k_{d-1}}|<\vep.
\end{multline*}
By \eqref{eq:sumaas} and \eqref{eq:przedost2}, this completes the
proof.
\end{proof}

\subsection{FS-type joinings and non-reversibility}
From now on we assume that all flows under consideration are
ergodic and aperiodic.

For any ${\bar{\vep}}=(\vep_0,\ldots,\vep_{d-1})\in\{0,1\}^{\prime
d}:=\{0,1\}^d\setminus\{(0,\ldots,0)\}$ and for any vector
$\bar{x}=(x_0,\ldots,x_{d-1})\in\R^d$ let
\[\bar{x}(\vep)=\vep_0x_0+\vep_1x_1+\ldots+\vep_{d-1}x_{d-1}.\]
If we look at the assumptions of Proposition~\ref{main1} we see
that for any choice of
${\bar{\vep}}=(\vep_0,\ldots,\vep_{d-1})\in\{0,1\}^{\prime d}$
setting $\bar{q}_n:=(q_n^0,q_n^1,\ldots,q_n^{d-1})$ we have
\begin{equation*}
\left(\bar{q}_n({\bar{\vep}})\right)_{n\geq1}\;\text{is a rigidity
sequence for $T$ and}\;
\left(f_0^{(\bar{q}_n({\bar{\vep}}))}\right)_{n\geq1}\;\text{is
bounded in $L^2$}.
\end{equation*}
we can assume that
\[\left(\left(f_0^{(\bar{q}_n({\bar{\vep}}))}\right)_{{\bar{\vep}}\in\{0,1\}^{\prime d}}\right)_\ast\to Q\in{\mathcal
P}(\R^{\{0,1\}^{\prime d}})\quad\text{when}\quad n\to\infty.\]

For any $\bar{t}\in\R^{\{0,1\}^{\prime d}}$ denote by
$\mu^f_{\bar{t}}\in J_{2^d}(T^f)$ the off-diagonal
$2^d$-self-joining defined the family of elements of the
centralizer
$\{T^f_{{\bar{t}}_{\bar{\vep}}}:{\bar{\vep}}\in\{0,1\}^{\prime
d}\}$, this is
\[\mu^f_{\bar{t}}\Big(\prod_{{\bar{\vep}}\in\{0,1\}^d}A_{\bar{\vep}}\Big)=\mu^f\Big(\bigcap_{{\bar{\vep}}\in\{0,1\}^d}T^f_{-{\bar{t}}_{\bar{\vep}}}A_{\bar{\vep}}\Big),\]
we make the convention that ${\bar{t}}_{(0,\ldots,0)}=0$ for any
${\bar{t}}\in\R^{\{0,1\}^{\prime d}}$. Hence, in view of
Proposition~\ref{main1}
\begin{equation}\label{no3}
\mu^f_{(\bar{q}_n({\bar{\vep}}))_{{\bar{\vep}}\in\{0,1\}^{\prime
d}}} \to \int_{\R^{\{0,1\}^{\prime d}}}
\mu^f_{-{\bar{t}}}\,dQ({\bar{t}}).
\end{equation}
Recall that given $\bar{a}=(a_0,\ldots,a_{d-1})\in \R^d$, by the
finite sum set $FS(\bar{a})$ of $\bar{a}$ we mean
\begin{align*}
FS(\bar{a})&=\{a_0,a_1,\ldots,a_{d-1},a_{0}+a_1,a_{0}+a_2,\ldots,a_0+a_1+\ldots+a_{d-1}\}\\
&=\big\{\bar{a}({\bar{\vep}}):{\bar{\vep}}\in\{0,1\}^{\prime
d}\big\}.
\end{align*}

The off-diagonal joinings on the LHS of~(\ref{no3}) have certain
symmetry property (explored below) which, when assuming
isomorphism of the flow with its inverse, should result in a
certain symmetry property of the limit measure~$Q$. Hence, if the
expected symmetry of $Q$ does not take place we obtain that the
flow is not isomorphic to its inverse. We now pass to a precise
description of the symmetry of $Q$ in a more general situation.

Assume that $\cT=(T_t)_{t\in\R}$ is an ergodic and aperiodic flow
on $\xbm$. Suppose that there exists a sequence
$(\bar{q}_n)_{n\geq 1}$ in $\R^d$, and a probability Borel measure
$Q\in{\mathcal P}(\R^{\{0,1\}^{\prime d}})$ such that
\begin{equation}\label{w1}
\mu_{(\bar{q}_n({\bar{\vep}}))_{{\bar{\vep}}\in\{0,1\}^{\prime
d}}} \to\int_{\R^{\{0,1\}^{\prime d}}}
\mu_{-{\bar{t}}}\,dQ({\bar{t}})\quad\text{in}\quad J_{2^d}(\cT).
\end{equation}
Note that, because of the aperiodicity of $\cT$, for distinct
$\bar{t},\bar{s}\in\R^{\{0,1\}^{\prime d}}$ the measures
$\mu_{{\bar{t}}}$, $\mu_{{\bar{s}}}$ are orthogonal. Therefore,
the integral in \eqref{w1} represents the ergodic decomposition of
the limit measure.

We also assume that $\mathcal{ T}$ and $\mathcal{ T}\circ(-1)$ are
isomorphic, i.e. for some invertible $S:\xbm\to\xbm$
\begin{equation}\label{w2}
S\circ T_t\circ S^{-1}=T_{-t}\quad\text{for each}\quad t\in\R.
\end{equation}
The map $S:X\to X$ induces a continuous (affine) invertible map $S_*:J_{2^d}(\cT)\to J_{2^d}(\cT)$
such that
\[
S_*(\rho)\Big(\prod_{{\bar{\vep}}\in\{0,1\}^d}A_{\bar{\vep}}\Big):=\rho\Big(\prod_{{\bar{\vep}}\in\{0,1\}^d}S^{-1}A_{\bar{\vep}}\Big)\quad\text{
for }\quad A_{{\bar{\vep}}}\in\mathcal{B},\;
{\bar{\vep}}\in\{0,1\}^d.
\]
Moreover, for any ${\bar{t}}\in\R^{\{0,1\}^{\prime d}}$
\begin{align*}
S_*&(\mu_{{\bar{t}}})\Big(\prod_{{\bar{\vep}}\in\{0,1\}^d}A_{\bar{\vep}}\Big)=
\mu_{{\bar{t}}}\Big(\prod_{{\bar{\vep}}\in\{0,1\}^d}S^{-1}A_{\bar{\vep}}\Big)=
\mu\Big(\bigcap_{{\bar{\vep}}\in\{0,1\}^d}T_{-{\bar{t}}_{\bar{\vep}}}S^{-1}A_{\bar{\vep}}\Big)\\
&=
\mu\Big(S^{-1}\bigcap_{{\bar{\vep}}\in\{0,1\}^d}T_{{\bar{t}}_{\bar{\vep}}}A_{\bar{\vep}}\Big)
=
\mu\Big(\bigcap_{{\bar{\vep}}\in\{0,1\}^d}T_{{\bar{t}}_{\bar{\vep}}}A_{\bar{\vep}}\Big)
=\mu_{-{\bar{t}}}\Big(\prod_{{\bar{\vep}}\in\{0,1\}^d}A_{\bar{\vep}}\Big)
\end{align*}
Thus
\begin{equation}\label{w3}
S_*\left(\mu_{{\bar{t}}}\right)=\mu_{-{\bar{t}}}.
\end{equation}
By the continuity of $S_*$
\begin{align*}
S_*\big(\mu_{(\bar{q}_n({\bar{\vep}}))_{{\bar{\vep}}\in\{0,1\}^{\prime
d}}}\big) \to S_*\Big(\int_{\R^{\{0,1\}^{\prime d}}}
\mu_{-{\bar{t}}}\,dQ({\bar{t}})\Big)=\int_{\R^{\{0,1\}^{\prime
d}}} S_*(\mu_{-{\bar{t}}})\,dQ({\bar{t}}).
\end{align*} In view of \eqref{w3}, it follows
that
\begin{equation}\label{eq:zbimin}
\mu_{(-\bar{q}_n({\bar{\vep}}))_{{\bar{\vep}}\in\{0,1\}^{\prime
d}}} \to \int_{\R^{\{0,1\}^{\prime
d}}}\mu_{{\bar{t}}}\,dQ({\bar{t}}).
\end{equation}
Let us consider the involution \[I:\{0,1\}^d\to\{0,1\}^d,\qquad
I(\vep_0,\ldots,\vep_{d-1})=(1-\vep_0,\ldots,1-\vep_{d-1}),\]
\[\bar{\theta}:\R^{\{0,1\}^{\prime d}}\to\R^{\{0,1\}^{\prime d}},\qquad
\bar{\theta}\Big(({\bar{t}}_{{\bar{\vep}}})_{{\bar{\vep}}\in\{0,1\}^{\prime
d}}\Big)=\Big(\big({\bar{t}}_{(1,\ldots,1)}-{\bar{t}}_{I({\bar{\vep}})}\big)_{{\bar{\vep}}\in\{0,1\}^{\prime
d}}\Big).\] Thus, by~(\ref{w1})
\begin{align*}
\mu&_{(-\bar{q}_n({\bar{\vep}}))_{{\bar{\vep}}\in\{0,1\}^{\prime
d}}}\Big(\prod_{{\bar{\vep}}\in\{0,1\}^d}A_{\bar{\vep}}\Big)=\mu\Big(\bigcap_{{\bar{\vep}}\in\{0,1\}^d}T_{\bar{q}_n({\bar{\vep}})}A_{\bar{\vep}}\Big)\\
&=\mu\Big(\bigcap_{{\bar{\vep}}\in\{0,1\}^d}T_{\bar{q}_n({\bar{\vep}})-\bar{q}_n(1,\ldots,1)}A_{\bar{\vep}}\Big)
=\mu\Big(\bigcap_{{\bar{\vep}}\in\{0,1\}^d}T_{-\bar{q}_n(I({\bar{\vep}}))}A_{\bar{\vep}}\Big)\\
&=\mu\Big(\bigcap_{{\bar{\vep}}\in\{0,1\}^d}T_{-\bar{q}_n({\bar{\vep}})}A_{I({\bar{\vep}})}\Big)=
\mu_{(\bar{q}_n({\bar{\vep}}))_{{\bar{\vep}}\in\{0,1\}^{\prime
d}}}\Big(\prod_{{\bar{\vep}}\in\{0,1\}^d}A_{I({\bar{\vep}})}\Big)\\
&\to \int_{\R^{\{0,1\}^{\prime
d}}}\mu_{-{\bar{t}}}\Big(\prod_{{\bar{\vep}}\in\{0,1\}^d}A_{I({\bar{\vep}})}\Big)\,dQ({\bar{t}})=\int_{\R^{\{0,1\}^{\prime
d}}}\mu\Big(\bigcap_{{\bar{\vep}}\in\{0,1\}^d}T_{{\bar{t}}_{{\bar{\vep}}}}A_{I({\bar{\vep}})}\Big)\,dQ({\bar{t}})\\
& =\int_{\R^{\{0,1\}^{\prime
d}}}\mu\Big(\bigcap_{{\bar{\vep}}\in\{0,1\}^d}T_{{\bar{t}}_{I({\bar{\vep}})}}A_{{\bar{\vep}}}\Big)\,dQ({\bar{t}})\\
&=\int_{\R^{\{0,1\}^{\prime
d}}}\mu\Big(\bigcap_{{\bar{\vep}}\in\{0,1\}^d}T_{-({\bar{t}}_{(1,\ldots,1)}-{\bar{t}}_{I({\bar{\vep}})})}A_{{\bar{\vep}}}\Big)\,dQ({\bar{t}})\\
&=\int_{\R^{\{0,1\}^{\prime
d}}}\mu_{\bar{\theta}({\bar{t}})}\Big(\prod_{{\bar{\vep}}\in\{0,1\}^d}A_{{\bar{\vep}}}\Big)\,dQ({\bar{t}});
\end{align*}
in the last line we use the fact that
${\bar{t}}_{(1,\ldots,1)}-{\bar{t}}_{I({\bar{\vep}})}=0$ for
${\bar{\vep}}=(0,\ldots,0)$. In view of \eqref{eq:zbimin}, it
follows that
\[
\int_{\R^{\{0,1\}^{\prime d}}}\mu_{T_{\bar{t}}}\,dQ({\bar{t}}) =
\int_{\R^{\{0,1\}^{\prime
d}}}\mu_{T_{{\bar{t}}}}\,d\bar{\theta}_*Q({\bar{t}}).
\]
By the uniqueness of ergodic decomposition, we get
$\bar{\theta}_*(Q)=Q$. In this way we have proved the following
result.

\begin{pr}\label{krytNOWE}
Assume that $\mathcal{ T}=(T_t)_{t\in\R}$ is an ergodic and
aperiodic flow on $\xbm$. Assume that $\cT$ satisfies~(\ref{w1}).
If the measure $Q$ is not invariant under the map
$\bar{\theta}:\R^{\{0,1\}^{\prime d}}\to\R^{\{0,1\}^{\prime d}}$
then $(T_t)_{t\in\R}$ is not isomorphic to its inverse. In
particular, $(T_t)_{t\in\R}$ is not reversible.
\end{pr}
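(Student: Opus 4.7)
The plan is to argue by contradiction. Suppose there exists an invertible $S:X\to X$ satisfying $S\circ T_t\circ S^{-1}=T_{-t}$ for every $t\in\R$; the goal is to derive that $Q$ must be $\bar{\theta}$-invariant, contradicting the hypothesis.

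First I would study how the induced continuous affine map $S_*:J_{2^d}(\cT)\to J_{2^d}(\cT)$ transforms off-diagonal joinings. A direct calculation, using the intertwining relation to pull $S^{-1}$ through each $T_{-\bar{t}_{\bar{\vep}}}$ factor and the $\cT$-invariance of $\mu$, gives the identity $S_*(\mu_{\bar{t}})=\mu_{-\bar{t}}$ for every $\bar{t}\in\R^{\{0,1\}^{\prime d}}$. Applying the continuous map $S_*$ to both sides of~(\ref{w1}) and pushing the limit inside the integral against $Q$ then produces a first expression for the weak limit of $\mu_{(-\bar{q}_n({\bar{\vep}}))_{{\bar{\vep}}}}$, namely $\int \mu_{{\bar{t}}}\,dQ({\bar{t}})$.

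Next, I would produce a second expression for the same weak limit purely from the combinatorics of the index set, without invoking $S$. The observation is that replacing each $\bar{q}_n(\bar{\vep})$ by $\bar{q}_n(\bar{\vep})-\bar{q}_n(1,\ldots,1)$ does not change the off-diagonal joining (by translation invariance of $\cT$), and the resulting exponent equals $-\bar{q}_n(I(\bar{\vep}))$, where $I(\bar{\vep})=(1,\ldots,1)-\bar{\vep}$. Relabeling the test product by $I$ therefore transforms $\mu_{(-\bar{q}_n({\bar{\vep}}))_{\bar{\vep}}}$ evaluated on $\prod A_{\bar{\vep}}$ into $\mu_{(\bar{q}_n({\bar{\vep}}))_{\bar{\vep}}}$ evaluated on $\prod A_{I(\bar{\vep})}$. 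Taking the limit with~(\ref{w1}), relabeling once more via $I$ and unwinding the definition of $\bar{\theta}$, I obtain the second expression $\int \mu_{\bar{\theta}({\bar{t}})}\,dQ({\bar{t}}) = \int \mu_{{\bar{t}}}\,d\bar{\theta}_*Q({\bar{t}})$.

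Equating the two expressions, I invoke the aperiodicity of $\cT$: distinct off-diagonal joinings $\mu_{\bar{t}}$ are mutually singular (they have disjoint ergodic supports), so each side is the genuine ergodic decomposition of the limit measure. Uniqueness of the ergodic decomposition forces $\bar{\theta}_*Q=Q$, which contradicts the hypothesis and completes the proof; reversibility is then ruled out a fortiori. The main point that requires care is the bookkeeping around the involution $I$ and the identity $\bar{q}_n(\bar{\vep})-\bar{q}_n(1,\ldots,1)=-\bar{q}_n(I(\bar{\vep}))$, together with the convention $\bar{t}_{(0,\ldots,0)}=0$ which must be tracked consistently through the relabelings so that the coordinate $(0,\ldots,0)$ plays no role on either side.
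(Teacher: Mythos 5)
Your proposal is correct and follows essentially the same route as the paper: the identity $S_*(\mu_{\bar t})=\mu_{-\bar t}$, the relabeling via the involution $I$ using $\bar q_n(\bar\vep)-\bar q_n(1,\ldots,1)=-\bar q_n(I(\bar\vep))$, and the appeal to uniqueness of the ergodic decomposition (mutual singularity of the $\mu_{\bar t}$ from aperiodicity) are exactly the steps the authors carry out before stating the proposition.
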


\begin{uw}\label{bbbbbb}
Suppose additionally that the flow $\ct$ is weakly mixing. Then
each its non-trivial factor is also weakly mixing, so it is
ergodic and aperiodic. For each such factor (\ref{w1}) is
evidently valid. It follows that the absence of isomorphism to the
inverse is inherited by non-trivial factors of $\ct$.
\end{uw}

Two particular cases follows. First, consider the case $d=2$. Then
the space $\R^{\{0,1\}^{\prime 2}}$ is identified with $\R^3$ by
the map $\R^{\{0,1\}^{\prime 2}}\ni
t\mapsto(t_{(1,1)},t_{(1,0)},t_{(0,1)})\in\R^3$. The the map
$\bar{\theta}$ is identified with $\theta:\R^3\to\R^3$,
$\theta(t,u,v)=(t,t-v,t-u)$.

\begin{wn}\label{krytNOWEd2}
Assume that $\mathcal{ T}=(T_t)_{t\in\R}$ is an ergodic and
aperiodic flow on $\xbm$. Assume moreover that
\[
\mu_{T_{r_n+q_n},T_{r_n},T_{q_n}}\mapsto
\int_{\R^3}\mu_{T_{-t},T_{-u},T_{-v}}\,dQ(t,u,v)
\]
for some probability measure $Q\in{\mathcal P}(\R^3)$. If the
measure $Q$ is not invariant under the map
$(t,u,v)\mapsto(t,t-v,t-u)$ then $\cT$ is not isomorphic to its
inverse.
\end{wn}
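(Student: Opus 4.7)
The plan is to derive Corollary~\ref{krytNOWEd2} as the direct $d=2$ instance of Proposition~\ref{krytNOWE}, with the only work being the careful bookkeeping of how the abstract index set $\{0,1\}^{\prime 2}$ corresponds to the concrete three real parameters appearing in the statement. So first I would make the identification
\[
\R^{\{0,1\}^{\prime 2}}\ni \bar{t}\longleftrightarrow (t,u,v)=(\bar{t}_{(1,1)},\bar{t}_{(1,0)},\bar{t}_{(0,1)})\in\R^3,
\]
and set $\bar{q}_n=(r_n,q_n)\in\R^2$. Then, by definition of the finite-sum notation $\bar{q}_n(\bar\vep)=\vep_0 r_n+\vep_1 q_n$, the three values $\bar{q}_n((1,1)),\bar{q}_n((1,0)),\bar{q}_n((0,1))$ are $r_n+q_n$, $r_n$, $q_n$ respectively, so the off-diagonal self-joining on the left-hand side of (\ref{w1}) is exactly $\mu_{T_{r_n+q_n},T_{r_n},T_{q_n}}$ (once we order the three non-zero vertices of $\{0,1\}^2$ by $(1,1),(1,0),(0,1)$). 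Under the same identification the integrand $\mu_{-\bar{t}}$ becomes $\mu_{T_{-t},T_{-u},T_{-v}}$, so the hypothesis of Proposition~\ref{krytNOWE} in the form~(\ref{w1}) is precisely the hypothesis of the corollary.

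Next I would compute the involution $\bar{\theta}$ in these coordinates. Since $I(1,1)=(0,0)$, $I(1,0)=(0,1)$ and $I(0,1)=(1,0)$, and with the convention $\bar{t}_{(0,0)}=0$, the definition $\bar{\theta}(\bar{t})_{\bar\vep}=\bar{t}_{(1,1)}-\bar{t}_{I(\bar\vep)}$ yields
\[
\bar{\theta}(\bar{t})_{(1,1)}=t,\qquad \bar{\theta}(\bar{t})_{(1,0)}=t-v,\qquad \bar{\theta}(\bar{t})_{(0,1)}=t-u,
\]
so $\bar{\theta}$ corresponds to $(t,u,v)\mapsto(t,t-v,t-u)$ on $\R^3$. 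Thus non-invariance of $Q$ under $(t,u,v)\mapsto(t,t-v,t-u)$ is the same as non-invariance of $Q$ (viewed on $\R^{\{0,1\}^{\prime 2}}$) under $\bar{\theta}$.

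Finally I would invoke Proposition~\ref{krytNOWE}: its conclusion gives at once that $\cT$ is not isomorphic to its inverse, which is the content of the corollary. There is no real obstacle here; the only thing to be careful about is the ordering convention for the three indices in $\{0,1\}^{\prime 2}$ and the sign/minus conventions in the off-diagonal joinings $\mu_{\bar{t}}$ versus $\mu_{-\bar{t}}$, both of which are fixed consistently by Proposition~\ref{krytNOWE} and then simply transported through the identification above.
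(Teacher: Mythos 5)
Your proposal is correct and coincides with the paper's own argument: the paper derives Corollary~\ref{krytNOWEd2} from Proposition~\ref{krytNOWE} by exactly the identification $\bar{t}\mapsto(\bar{t}_{(1,1)},\bar{t}_{(1,0)},\bar{t}_{(0,1)})$ of $\R^{\{0,1\}^{\prime 2}}$ with $\R^3$, under which $\bar{\theta}$ becomes $(t,u,v)\mapsto(t,t-v,t-u)$. Your computation of $I$ and of $\bar{\theta}$ in these coordinates, and the bookkeeping of $\bar{q}_n(\bar\vep)$ and of the sign conventions in $\mu_{\bar{t}}$ versus $\mu_{-\bar{t}}$, all match the paper.
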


Now suppose that $\bar{q}_n=(q_n,\ldots,q_n)$. Then
$\bar{q}_n(\bar{\vep})=|\bar{\vep}|q_n$, where
$|\bar{\vep}|=\vep_1+\cdots+\vep_d$. Let us consider the maps
\begin{align*}
\varrho:\R^d\to\R^{\{0,1\}^{\prime d}},&\quad
\varrho\big((x_j)_{j=0}^{d-1}\big)=
\big(x_{d-|\overline{\vep}|}\big)_{\bar{\vep}\in\{0,1\}^{\prime
d}},\\
\theta:\R^d\to\R^d,&\quad
\theta(t_0,t_1,\ldots,t_{d-1})=(t_0,t_0-t_{d-1},\ldots,t_0-t_{1}).
\end{align*}
Then $\varrho\circ\theta=\bar{\theta}\circ\varrho$. Moreover, if
\begin{equation}\label{w1konkret}
\mu_{T_{dq_n},T_{(d-1)q_n},\ldots,T_{q_n}}\mapsto
\int_{\R^d}\mu_{T_{-t_0},T_{-t_1},\ldots,T_{-t_{d-1}}}\,dP(t_0,\ldots,t_{d-1})
\end{equation}
for some $P\in\mathcal{P}(\R^d)$ then \eqref{w1} holds  for a
measure $Q=\varrho_*(P)\in \mathcal{P}(\R^{\{0,1\}^{\prime d}})$.
Moreover, $\bar{\theta}_*(Q)=Q$ implies
$\varrho_*\theta_*(P)=\varrho_*(P)$, and hence $\theta_*(P)=P$. As
a conclusion from Proposition~\ref{krytNOWE} we obtain the
following.

\begin{wn}\label{krytNOWEd2prime}
Assume that $\mathcal{ T}=(T_t)_{t\in\R}$ is an ergodic and
aperiodic flow on $\xbm$. Assume that \eqref{w1konkret} is valid
for a  measure $P\in{\mathcal P}(\R^d)$. If the measure $P$ is not
invariant under the map $\theta:\R^d\to\R^d$ then $\cT$ is not
isomorphic to its inverse.
\end{wn}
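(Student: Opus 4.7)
The plan is to reduce Corollary~\ref{krytNOWEd2prime} to Proposition~\ref{krytNOWE} via the change-of-coordinates map $\varrho$. Since $\bar{q}_n=(q_n,\ldots,q_n)$, we have $\bar{q}_n(\bar{\vep})=|\bar{\vep}|q_n$, so the off-diagonal joining $\mu_{(\bar{q}_n(\bar{\vep}))_{\bar{\vep}\in\{0,1\}^{\prime d}}}$ depends only on $T_{q_n},T_{2q_n},\ldots,T_{dq_n}$; tracking the coordinates, it is precisely the image under $\varrho$ (applied coordinatewise) of $\mu_{T_{dq_n},T_{(d-1)q_n},\ldots,T_{q_n}}$. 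Consequently, assumption~\eqref{w1konkret} pushed forward via $\varrho$ yields \eqref{w1} with
\[
Q:=\varrho_*(P)\in\mathcal{P}(\R^{\{0,1\}^{\prime d}}).
\]

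The next step is to verify the intertwining identity $\varrho\circ\theta=\bar{\theta}\circ\varrho$. Unwinding the definitions and using the convention $t_{(0,\ldots,0)}=0$, both sides send $(t_0,\ldots,t_{d-1})$ to the vector whose $\bar{\vep}$-coordinate equals $t_0-t_{|\bar{\vep}|}$ for $|\bar{\vep}|<d$ and equals $t_0$ for $\bar{\vep}=(1,\ldots,1)$. This follows for $\bar{\theta}\circ\varrho$ because $\varrho(t)_{(1,\ldots,1)}=t_0$ and $\varrho(t)_{I(\bar{\vep})}=t_{d-|I(\bar{\vep})|}=t_{|\bar{\vep}|}$ (with the convention used for $\bar{\vep}=(1,\ldots,1)$); and for $\varrho\circ\theta$ directly from $\theta(t)_j=t_0-t_{d-j}$ for $j\geq 1$ and $\theta(t)_0=t_0$.

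Having these two facts, assume for contradiction that $\cT$ is isomorphic to its inverse. Proposition~\ref{krytNOWE} forces $\bar{\theta}_*(Q)=Q$, so
\[
\varrho_*(\theta_*(P))=\bar{\theta}_*(\varrho_*(P))=\bar{\theta}_*(Q)=Q=\varrho_*(P).
\]
The final step is to cancel $\varrho_*$. The map $\varrho\colon\R^d\to\R^{\{0,1\}^{\prime d}}$ is injective: as $\bar{\vep}$ ranges over $\{0,1\}^{\prime d}$, the index $d-|\bar{\vep}|$ attains every value in $\{0,1,\ldots,d-1\}$, so each coordinate $t_j$ of $t\in\R^d$ appears among the coordinates of $\varrho(t)$. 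Being a continuous injection between Polish spaces, $\varrho$ induces an injective push-forward on Borel probability measures, whence $\theta_*(P)=P$, contradicting the hypothesis.

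The proof is essentially bookkeeping: the only non-mechanical point is lining up the combinatorics of $I$, the size $|\bar{\vep}|$, the convention $t_{(0,\ldots,0)}=0$, and the definitions of $\theta$ and $\bar{\theta}$ so that the intertwining $\varrho\circ\theta=\bar{\theta}\circ\varrho$ holds on the nose. Once verified, the result is a formal push-forward consequence of Proposition~\ref{krytNOWE}.
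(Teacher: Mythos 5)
Your proposal is correct and follows essentially the same route as the paper: pushing \eqref{w1konkret} forward by $\varrho$ to get \eqref{w1} with $Q=\varrho_*(P)$, verifying the intertwining $\varrho\circ\theta=\bar{\theta}\circ\varrho$, invoking Proposition~\ref{krytNOWE}, and cancelling $\varrho_*$ by injectivity. The only difference is that you spell out the coordinate bookkeeping and the injectivity of $\varrho_*$ explicitly, which the paper leaves implicit.
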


Finally consider $d=2$.

\begin{wn}\label{kryt}
Assume that $\mathcal{ T}=(T_t)_{t\in\R}$ is an ergodic and
aperiodic flow on $\xbm$. Assume also that $$
\mu_{T_{2q_n},T_{q_n}}\to\int_{\R^2}\mu_{T_{-t},T_{-u}}\,dQ(t,u)$$
for some probability measure $Q$ on $\R^2$. If the measure $Q$ is
not invariant under $\theta(t,u)=(t,t-u)$ then $(T_t)_{t\in\R}$ is
not isomorphic to its inverse. In particular, if $(0,x)\in\R^2$ is
an atom of $Q$ but  $(0,-x)$ is not then $\cT$ is not isomorphic
to its inverse.
\end{wn}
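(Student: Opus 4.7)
The plan is to derive Corollary~\ref{kryt} as a direct specialization of Corollary~\ref{krytNOWEd2prime} to $d=2$, supplemented by an elementary atom-preservation argument for the ``in particular'' clause. There is essentially no new content beyond matching notation and observing that $\theta$ is an involution.

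First, I would invoke Corollary~\ref{krytNOWEd2prime} with $d=2$ and the diagonal sequence $\bar{q}_n=(q_n,q_n)$. Then $\bar{q}_n(\bar{\vep})=|\bar{\vep}|\,q_n$, so the hypothesis \eqref{w1konkret} of Corollary~\ref{krytNOWEd2prime} becomes exactly
$$\mu_{T_{2q_n},T_{q_n}} \to \int_{\R^2}\mu_{T_{-t},T_{-u}}\,dP(t,u),$$
which matches the hypothesis of the present statement with $P=Q$ under the identification $(t_0,t_1)=(t,u)$. The map $\theta\colon\R^2\to\R^2$ in Corollary~\ref{krytNOWEd2prime}, defined by $\theta(t_0,t_1)=(t_0,t_0-t_1)$, specializes precisely to $\theta(t,u)=(t,t-u)$. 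With these identifications, the first assertion of Corollary~\ref{kryt} is nothing but Corollary~\ref{krytNOWEd2prime}.

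For the ``in particular'' clause, I would observe that $\theta$ is an involution, since
$$\theta^2(t,u)=\theta(t,t-u)=(t,t-(t-u))=(t,u),$$
so $\theta(0,x)=(0,-x)$ and $\theta(0,-x)=(0,x)$. If $Q$ were $\theta$-invariant, then for any singleton $\{y\}$ one would have $Q(\{y\})=Q(\theta^{-1}\{y\})=Q(\{\theta(y)\})$, since $\theta$ is an involution. Applied to $y=(0,x)$ this gives $Q(\{(0,-x)\})=Q(\{(0,x)\})>0$, contradicting the assumption that $(0,-x)$ is not an atom of $Q$. Hence $Q$ is not $\theta$-invariant and the first part of the corollary applies, yielding that $\cT$ is not isomorphic to its inverse.

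The only point requiring care is the bookkeeping between the indexing $(t_0,t_1)$ used in Corollary~\ref{krytNOWEd2prime} and the indexing $(t,u)$ used here, together with the trivial verification that the specialized $\theta$ is an involution; no genuine obstacle arises.
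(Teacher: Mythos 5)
Your proposal is correct and follows exactly the paper's route: the paper obtains Corollary~\ref{kryt} precisely as the $d=2$ specialization of Corollary~\ref{krytNOWEd2prime} with $\bar q_n=(q_n,q_n)$, and your involution argument for the ``in particular'' clause supplies the (elementary, unstated) verification the paper leaves to the reader.
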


In next three sections we will deal with special flows built over
irrational rotations on the circle. Such flows are always ergodic
and aperiodic (see Remark~\ref{uw:aperiodic}), so we can apply the
results of this section for proving the absence of isomorphism
with their inverses.


\section{Non-reversible special flows over irrational rotations}\label{sec:vonn}

In this section we will discuss non-reversibility property for special
flows built over irrational rotations on the circle and under
piecewise absolutely continuous roof functions. For a real number $t$ denote
by $\{t\}$ its fractional part
and by $\|t\|$ its distance to the nearest integer number.

We call a function $f:\T\to\R$ \emph{ piecewise absolutely
continuous} if there exist $\beta_1,\ldots,\beta_K\in\T$ such that
$f|_{(\beta_j,\beta_{j+1})}$ is an absolutely continuous function
for $j=1,\ldots,K$ ($\beta_{K+1}=\beta_1$).
 Let
$d_j:=f_-(\beta_j)-f_+(\beta_j)$, where
$f_{\pm}(\beta)=\lim_{y\to\beta^\pm}f(y)$. Then the number
\begin{equation}\label{sumskok}
S(f):=\sum_{j=1}^Kd_j=\int_{\T}f'(x)dx
\end{equation}
is the \emph{ sum of jumps} of $f$. Without loss of generality we
can restrict ourselves to functions continuous on the right. Each
such function can be represented as $f=f_{pl}+f_{ac}$, where
$f_{ac}:\T\to\R$ is an absolutely continuous function with zero
mean and
\[f_{pl}(x)=\sum_{i=1}^Kd_i\{x-\beta_i\}+d.\]
In this section we will prove non-reversibility for
special flows $T^f$ built over almost every irrational
rotation $Tx=x+\alpha$ and under roof functions $f$ with $S(f)\neq0$.
Such flows are called von Neumann flows.

We need some auxiliary simple lemmas.

\begin{lm}\label{kf1} Let $(X_n)$ be a sequence of random
variables (each one defined on a probability space $(\Omega, \mathcal{
F},\mu)$) with values on $\R^d$. Assume that for $n\geq1$ we have
a partition $\{A_k^n: k=1,\ldots,K\}$ of $\Omega$ such that
$\mu(A^n_k)\to\delta_k$ when $n\to \infty$ for each
$k=1,\ldots,K$. Assume moreover that for each $k=1,\ldots,K$
\[
(X_n)_\ast(\mu_{A^n_k})\to P_k\;\;\text{when}\;\;n\to\infty
\]
weakly in the space of probability measures on $\R^d$ ($\mu_C$
stands for the relevant conditional measure: $\mu_C(A):=\mu(A\cap
C)/\mu(C)$). Then
\[
(X_n)_\ast(\mu)\to\sum_{k=1}^K\delta_kP_k.
\]
\end{lm}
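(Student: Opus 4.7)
The plan is to decompose the pushforward $(X_n)_*(\mu)$ along the partition and pass to the limit in each piece. For any Borel set $B \subset \R^d$, since $\{A^n_k : k=1,\ldots,K\}$ is a partition of $\Omega$,
\[
(X_n)_*(\mu)(B) = \sum_{k=1}^K \mu(X_n^{-1}(B)\cap A^n_k) = \sum_{k=1}^K \mu(A^n_k)\,(X_n)_*(\mu_{A^n_k})(B),
\]
where the last equality uses the definition of the conditional measure $\mu_{A^n_k}$ (only valid when $\mu(A^n_k)>0$, but the factor $\mu(A^n_k)$ in front kills any ambiguity in the remaining cases). Equivalently, for $\phi\in C_b(\R^d)$,
\[
\int_{\R^d}\phi\,d(X_n)_*(\mu)=\sum_{k=1}^{K}\mu(A^n_k)\int_{\R^d}\phi\,d(X_n)_*(\mu_{A^n_k}).
\]

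The main step is to analyze each summand. I would split the indices into two groups: $\delta_k>0$ and $\delta_k=0$. For $k$ with $\delta_k>0$, for all sufficiently large $n$ we have $\mu(A^n_k)>0$, so the conditional measure is well-defined, and the hypothesis $(X_n)_*(\mu_{A^n_k})\to P_k$ weakly gives $\int\phi\,d(X_n)_*(\mu_{A^n_k})\to\int\phi\,dP_k$; combined with $\mu(A^n_k)\to\delta_k$ this yields the contribution $\delta_k\int\phi\,dP_k$. For $k$ with $\delta_k=0$, the integral $\int\phi\,d(X_n)_*(\mu_{A^n_k})$ (when defined) is bounded in modulus by $\|\phi\|_\infty$, while $\mu(A^n_k)\to 0$, so this summand goes to zero regardless of whether $(X_n)_*(\mu_{A^n_k})$ converges. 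Adding the contributions, one obtains
\[
\int_{\R^d}\phi\,d(X_n)_*(\mu)\longrightarrow \sum_{k=1}^{K}\delta_k\int_{\R^d}\phi\,dP_k=\int_{\R^d}\phi\,d\Big(\sum_{k=1}^K\delta_kP_k\Big).
\]

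Finally, I would observe that $\sum_{k=1}^K\delta_k=\lim_n\sum_{k=1}^K\mu(A^n_k)=1$, so the limit $\sum_{k=1}^K\delta_kP_k$ is indeed a probability measure, and the weak convergence claimed in the lemma follows. There is no genuine obstacle here; the only mildly subtle point is the treatment of indices with $\delta_k=0$, which is handled purely by the boundedness of continuous test functions, so no extra tightness or compactness argument is required.
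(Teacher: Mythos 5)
Your proof is correct and follows essentially the same route as the paper: test against a bounded continuous $\phi$, decompose $\int_\Omega\phi(X_n)\,d\mu=\sum_k\mu(A^n_k)\int_\Omega\phi(X_n)\,d\mu_{A^n_k}$, and pass to the limit term by term. The paper states this in one display without comment; your extra care about indices with $\delta_k=0$ is a harmless (and correct) refinement of the same argument.
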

\begin{proof}
Assume that $\phi:\R^d\to\R$ is continuous and bounded. Then
\begin{align*}
\int_{\R^d}\phi(t)\,d\left(\left(X_n\right)_\ast(\mu)\right)(t)&=
\int_{\Omega}\phi(X_n)\,d\mu=\sum_{k=1}^K \mu(A^n_k)\int_\Omega\phi(X_n)\,d\mu_{A^n_k}\\&\to
\sum_{k=1}^K\delta_k\int_{\R^d}\phi(t)\,dP_k(t).
\end{align*}
\end{proof}

\begin{lm}\label{kf2}
Let $(X_n)$ and $(C_n)$ be  sequences of random
variables (each one defined on a probability space $(\Omega_n, \mathcal{
F}_n,\mu_n)$) with values on $\R^d$. Assume that $(X_n)_\ast(\mu_n)\to
P$ and $C_n$ tends uniformly to the constant function $c\in\R^d$. Then
\[
(X_n+C_n)_\ast(\mu_n)\to (T_c)_\ast(P),
\]
where $T_c(x)=x+c$ in $\R^d$.
\end{lm}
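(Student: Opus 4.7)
The plan is to reduce convergence of $(X_n+C_n)_\ast(\mu_n)$ to $(T_c)_\ast(P)$ to the definition of weak convergence and then absorb the perturbation $C_n-c$ using tightness. Concretely, I would fix a bounded continuous test function $\phi:\R^d\to\R$ with $\|\phi\|_\infty\leq M$ and split
\[
\int_{\Omega_n}\phi(X_n+C_n)\,d\mu_n
=\underbrace{\int_{\Omega_n}\bigl(\phi(X_n+C_n)-\phi(X_n+c)\bigr)\,d\mu_n}_{=:I_n}
+\underbrace{\int_{\Omega_n}\phi(X_n+c)\,d\mu_n}_{=:II_n}.
\]
The second term is handled immediately: $y\mapsto\phi(y+c)$ is bounded continuous, hence by the hypothesis $(X_n)_\ast(\mu_n)\to P$ weakly, $II_n\to\int_{\R^d}\phi(y+c)\,dP(y)=\int\phi\,d((T_c)_\ast P)$.

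The substantive point is showing $I_n\to 0$. Since the sequence $(X_n)_\ast(\mu_n)$ converges weakly in $\mathcal{P}(\R^d)$, it is tight (Prokhorov), so given $\varepsilon>0$ I choose a compact $K\subset\R^d$ with $\mu_n(X_n\notin K)<\varepsilon$ for all sufficiently large $n$. The set $K':=\{y+v:y\in K,\ \|v-c\|\leq 1\}$ is compact, hence $\phi|_{K'}$ is uniformly continuous; pick $\delta\in(0,1)$ from its modulus so that $\|u-u'\|<\delta$ with $u,u'\in K'$ forces $|\phi(u)-\phi(u')|<\varepsilon$. By the uniform convergence $C_n\rightrightarrows c$, for $n$ large enough we have $\|C_n(\omega)-c\|<\delta$ for every $\omega\in\Omega_n$; then on $\{X_n\in K\}$ both $X_n+C_n$ and $X_n+c$ lie in $K'$, whereas on the complement the integrand is bounded by $2M$. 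Splitting the integral accordingly gives $|I_n|\leq\varepsilon+2M\varepsilon$, which forces $I_n\to 0$.

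Since $\phi$ was arbitrary bounded continuous, this establishes weak convergence $(X_n+C_n)_\ast(\mu_n)\to (T_c)_\ast(P)$. There is no real obstacle here: the only subtlety is that $\phi$ need not be uniformly continuous on all of $\R^d$, which is exactly what tightness of the weakly convergent sequence $(X_n)_\ast(\mu_n)$ circumvents. An alternative route that I would mention only as a remark is to observe that $(X_n,C_n)_\ast(\mu_n)\to P\otimes\delta_c$ jointly (since $C_n$ is essentially deterministic in the limit) and then apply the continuous mapping theorem to addition $(x,v)\mapsto x+v$; but the direct estimate above is shorter and avoids invoking joint convergence.
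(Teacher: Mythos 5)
Your proof is correct, but it takes a different technical route from the paper. The paper tests weak convergence only against the characters $x\mapsto e^{2\pi i s\cdot x}$, i.e.\ it verifies pointwise convergence of characteristic functions (implicitly invoking L\'evy's continuity theorem, or rather the fact that convergence of Fourier transforms characterizes weak convergence when a limit measure is already in hand). Since each character is globally Lipschitz with constant $2\pi|s|$, the perturbation is absorbed in one line via $\big|e^{2\pi i s\cdot(X_n+C_n-c)}-e^{2\pi i s\cdot X_n}\big|\leq 2\pi|s|\,|C_n-c|$, with no compactness argument needed. You instead test against arbitrary bounded continuous $\phi$, and must then pay for the fact that such $\phi$ need not be uniformly continuous on $\R^d$; you do so correctly by extracting tightness of the weakly convergent sequence $(X_n)_\ast(\mu_n)$ from Prokhorov's theorem and localizing to a compact set. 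Both arguments are sound; the paper's is shorter because the test class is built from uniformly continuous functions, while yours is marginally more self-contained in that it works directly with the definition of weak convergence (and could be shortened further by recalling that bounded \emph{uniformly} continuous test functions already suffice for weak convergence, which would let you skip the tightness step entirely). Your closing remark about passing through $(X_n,C_n)_\ast(\mu_n)\to P\otimes\delta_c$ and the continuous mapping theorem is also a valid third route.
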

\begin{proof} Fix $s\in\R^d$. By assumption
\[
\int_{\Omega}e^{2\pi is\cdot X_n}\,d\mu_n\to \int_{\R^d}e^{2\pi i
s\cdot t}\,dP(t).
\]
Moreover,
\begin{align*}
\Big|&\int_{\Omega}e^{2\pi is\cdot
(X_n+C_n)}\,d\mu_n-\int_{\R^d}e^{2\pi is\cdot (t+c)}\,dP(t)\Big|\\
&=\Big|\int_{\Omega}e^{2\pi is\cdot
(X_n+C_n-c)}\,d\mu_n-\int_{\R^d}e^{2\pi is\cdot t}\,dP(t)\Big|\\
&\leq\Big|\int_{\Omega}e^{2\pi is\cdot
X_n}\,d\mu_n-\int_{\R^d}e^{2\pi is\cdot
t}\,dP(t)\Big|+2\pi|s|\int_{\Omega}|C_n-c|\,dP(t).
\end{align*} It
follows that
\begin{align*}
\int_{\Omega}e^{2\pi is\cdot (X_n+C_n)}\,d\mu_n\to
\int_{\R^d}e^{2\pi is\cdot (t+c)}\,dP(t)= \int_{\R^d}e^{2\pi
is\cdot t}\,d((T_c)_\ast(P))(t),
\end{align*}
which completes the proof.
\end{proof}

The following lemma holds.

\begin{lm}\label{kf3}
Let $(X_n)$ be a sequence of random
variables (each one defined on a probability space
$(\Omega_n,\mathcal{ F}_n,\mu_n)$) with values on $\R^d$ such that
$(X_n)_\ast(\mu_n)\to P$. Assume that $A:\R^d\to\R^{d'}$ is
continuous. Then
$$
\left(A(X_n)\right)_\ast(\mu_n)\to A_\ast(P).$$
\end{lm}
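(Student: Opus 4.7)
The plan is to invoke the standard change-of-variables/continuous-mapping principle for weak convergence of probability measures: weak convergence is preserved under pushforward by continuous maps. The proof amounts to unravelling the definitions and applying the hypothesis to the correct test function.

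First I would recall that, by definition, $(X_n)_*(\mu_n)\to P$ weakly means that for every bounded continuous $\psi:\R^d\to\R$,
\[
\int_{\R^d}\psi\,d((X_n)_*(\mu_n))\longrightarrow\int_{\R^d}\psi\,dP.
\]
To prove $(A(X_n))_*(\mu_n)\to A_*(P)$ weakly, I must show the analogous convergence for every bounded continuous $\phi:\R^{d'}\to\R$. The key observation is that for such $\phi$, the composition $\psi:=\phi\circ A:\R^d\to\R$ is again bounded and continuous (bounded because $\phi$ is, continuous by continuity of $A$), so the hypothesis applies to $\psi$.

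Next, using the standard change-of-variables formula for pushforward measures,
\[
\int_{\R^{d'}}\phi\,d((A(X_n))_*(\mu_n))=\int_{\Omega_n}\phi(A(X_n))\,d\mu_n=\int_{\R^d}\phi\circ A\,d((X_n)_*(\mu_n)),
\]
and similarly $\int_{\R^{d'}}\phi\,d(A_*(P))=\int_{\R^d}\phi\circ A\,dP$. Applying the weak convergence hypothesis to $\psi=\phi\circ A$ yields
\[
\int_{\R^{d'}}\phi\,d((A(X_n))_*(\mu_n))\longrightarrow\int_{\R^{d'}}\phi\,d(A_*(P)),
\]
which is exactly the required weak convergence.

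There is essentially no obstacle: the only things one needs are that the composition of a bounded continuous function with a continuous function is bounded and continuous, and that the pushforward obeys the change-of-variables formula. Both are immediate from the definitions, so the entire proof is a one-line invocation of the continuous mapping theorem. (Note that the $X_n$ can be defined on different probability spaces $(\Omega_n,\mathcal F_n,\mu_n)$ without any issue, since weak convergence depends only on the laws $(X_n)_*(\mu_n)$.)
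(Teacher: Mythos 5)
Your proof is correct; it is the standard continuous mapping theorem argument via test functions $\phi\circ A$ and the change-of-variables formula for pushforwards. The paper itself gives no proof of this lemma (it is stated as a well-known fact), so there is nothing to contrast with: your argument is exactly the standard one being invoked.
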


\begin{uw}\label{kf4}\rm
Directly from the definition it follows that $\{x+y\}=\{x+\{y\}\}$
for each $x,y\in\R$. Moreover, whenever $a,b\in\T=[0,1)$, we have
$$
\{x+a-b\}-\{x-b\}=a-{\mathbf 1}_{[\{b-a\},b)}(x)$$ for $x\in\T$,
where $[\{b-a\},b)$ is understood as an interval on the circle (if
$d>e$ then $[d,e)=[d,1)\cup[0,e)$). Indeed,
$\{x+a-b\}-\{x-b\}=\{a+\{x-b\}\}-\{x-b\}$ and for $0\leq t<1$ we
have  $\{a+t\}-t= a$ if $0\leq t<1-a$ and $a-1$ for $1-a\leq t<1$.
\end{uw}

For any irrational number $\alpha=[0;a_1,a_2,\ldots)\in\T$ denote
by $(p_n/q_n)_{n\geq 0}$ the sequence of convergents  in continued
fraction expansion of $\alpha$ (see e.g.\ \cite{Kh} for basic
properties of continued fraction expansion of  $\alpha$).

\begin{lm}\label{kf0}
The set $\Lambda\subset[0,1)$ of those $\alpha$ irrational for
which for each $\vep>0$ there exists $0<\delta<\vep$ such that
$$q_{n_k}\|q_{n_k}\alpha\|\to\delta$$
along a subsequence $n_k=n_k(\vep)$ is of full Lebesgue measure.
\end{lm}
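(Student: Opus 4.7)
The plan is to express $q_n\|q_n\alpha\|$ in terms of continued fraction data and then exploit the ergodicity of the Gauss map. Writing $\alpha=[0;a_1,a_2,\ldots]$, letting $\alpha_{n+1}:=[a_{n+1};a_{n+2},\ldots]$ be the complete quotient, and using the classical identity $\alpha=(\alpha_{n+1}p_n+p_{n-1})/(\alpha_{n+1}q_n+q_{n-1})$ together with $p_{n-1}q_n-p_nq_{n-1}=(-1)^n$, I would first derive
\[
q_n\|q_n\alpha\|=\frac{1}{\alpha_{n+1}+q_{n-1}/q_n},
\]
where $\alpha_{n+1}\in[1,\infty)$ and $q_{n-1}/q_n\in[0,1]$. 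Observe moreover that $\alpha_{n+1}=1/G^n(\alpha)$, where $G(x)=\{1/x\}$ is the Gauss map on $[0,1)$. This identity reduces the problem to controlling the orbit of $\alpha$ under $G$ together with the auxiliary sequence $q_{n-1}/q_n$.

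Next I would invoke the ergodicity of $G$ with respect to the Gauss measure $d\mu_G=\frac{1}{\log 2}\frac{dx}{1+x}$, which is equivalent to Lebesgue measure. Applying Birkhoff's theorem to the indicator function of an open interval shows that almost every $\alpha$ visits that interval infinitely often; taking the intersection over a countable base of open intervals of $[0,1)$ (and using $\mu_G\sim\lambda_{\R}$) yields that for Lebesgue-a.e.\ $\alpha$ the forward orbit $(G^n\alpha)_{n\geq 0}$ is dense in $[0,1)$. Denote by $\Lambda$ the set of such $\alpha$; then $\Lambda$ has full Lebesgue measure.

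Now I would verify the conclusion for every $\alpha\in\Lambda$. Fix $\vep>0$ and pick any $\beta^*\in(0,\vep/2)$. By density there is a subsequence $(n_k)$ with $G^{n_k}\alpha\to\beta^*$, equivalently $\alpha_{n_k+1}\to 1/\beta^*=:y^*\in(2/\vep,\infty)$. Because $q_{n-1}/q_n\in[0,1]$ is bounded, by a diagonal extraction I may assume $q_{n_k-1}/q_{n_k}\to z^*\in[0,1]$. Then
\[
q_{n_k}\|q_{n_k}\alpha\|\;\longrightarrow\;\delta:=\frac{1}{y^*+z^*},
\]
and the estimates $0<y^*+z^*<\infty$ together with $y^*>2/\vep$ give $0<\delta\leq 1/y^*=\beta^*<\vep$, as required.

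There is no essential obstacle here; the only delicate point is that the witness $\delta$ must lie strictly in $(0,\vep)$ rather than merely in $[0,\vep)$. Taking $\beta^*>0$ (so $y^*<\infty$) secures $\delta>0$, while $\beta^*<\vep/2$ secures $\delta<\vep$; compactness of $[0,1]$ removes the need for any fine control of $q_{n-1}/q_n$, so only density of $(G^n\alpha)$ is really used.
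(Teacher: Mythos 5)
Your proof is correct and follows essentially the same route as the paper, which sandwiches $q_n\|q_n\alpha\|$ between $\tfrac{1}{2(a_{n+1}+1)}$ and $\tfrac{1}{a_{n+1}}$ and then invokes ergodicity of the Gauss map. You simply carry out in detail (exact formula via the complete quotient, density of the Gauss orbit, compactness to handle $q_{n-1}/q_n$) what the paper leaves as a one-line remark.
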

\begin{proof}
We have
$$
\frac12\frac{1}{a_{n+1}+1}< q_n\|q_n\alpha\|<\frac1{a_{n+1}}.$$
The result follows directly from the ergodicity of the Gauss map
$G:[0,1)\to[0,1)$ (see e.g.\ \cite{Ei-Wa}).
\end{proof}

Assume that $f(x)=\sum_{i=1}^Kd_i\{x-\beta_i\}+d$. Let
$Tx=x+\alpha$ and suppose that $\{q_n\alpha\}=\|q_n\alpha\|$. The
case where $\{q_n\alpha\}=1-\|q_n\alpha\|$ can be treated in a
similar way. We have
$$
f_0^{(q_n)}(T^{q_n}x)-f_0^{(q_n)}(x)=
f^{(q_n)}(T^{q_n}x)-f^{(q_n)}(x)=\sum_{j=0}^{q_n-1}\left( f\circ
T^{q_n}-f\right)(T^jx).
$$
Moreover, in view of Remark~\ref{kf4},
\begin{align*}
f&(T^{q_n}y)-f(y)=\sum_{i=1}^Kd_i\left( \{y+q_n\alpha-\beta_i\}-\{y-\beta_i\}\right)
\\&=\sum_{i=1}^Kd_i\left(
\{y+\|q_n\alpha\|-\beta_i\}-\{y-\beta_i\}\right)=
\sum_{i=1}^Kd_i\left(\|q_n\alpha\|-{\mathbf
1}_{[\beta_i-\|q_n\alpha\|,\beta_i)}(y)\right).
\end{align*}
Thus
\begin{equation}\label{kf5}
f^{(q_n)}(T^{q_n}x)-f^{(q_n)}(x)=q_n\|q_n\alpha\|\Big(\sum_{i=1}^Kd_i\Big)
-\sum_{i=1}^Kd_i\sum_{j=0}^{q_n-1}{\mathbf
1}_{[\beta_i-\|q_n\alpha\|,\beta_i)}(x+j\alpha).\end{equation}
Moreover, since $[s,s+\|q_n\alpha\|)$ is the base of a Rokhlin
tower of height $q_{n+1}$, we have
\begin{equation}\label{kf6}
\sum_{j=0}^{q_n-1}{\mathbf
1}_{[\beta_i-\|q_n\alpha\|,\beta_i)}(x+j\alpha)=0\quad\text{ or}\quad 1.
\end{equation}

Given $n\geq1$ and $\epsilon\in\{0,1\}^K$, taking into
account~(\ref{kf6}), set
\begin{equation}\label{kf7}
A_\epsilon^n=\left\{x\in\T:\:\sum_{j=0}^{q_n-1}
{\mathbf1}_{[\beta_i-\|q_n\alpha\|,\beta_i)}(x+j\alpha)=
\epsilon_i\;\; \text{for}\;\;i=1,\ldots, K\right\}.\end{equation}
Then, in view of~(\ref{kf5}), for $x\in A^n_\epsilon$ we have
\begin{equation}\label{kf8}
f^{(q_n)}(T^{q_n}x)-f^{(q_n)}(x)=q_n\|q_n\alpha\|S(f)-C_\epsilon,\end{equation}
where
$C_\epsilon=\sum_{i=1}^Kd_i\epsilon_i$ (note that $C_{\underline{0}}=C_{(0,\ldots,0)}=0$).

Suppose that the roof function $f:\T\to\R$ is a piecewise absolutely
continuous function and let us decompose $f=f_{pl}+f_{ac}$.
Suppose that $q_n\|q_n\alpha\|\to\delta>0$ and
$\mu(A^n_\epsilon)\to p_\epsilon$ for $\epsilon\in\{0,1\}^K$ (sets $A^n_\epsilon$
are defined accordingly to the function $f_{pl}$).
By Koksma-Denjoy inequality (see e.g.\ \cite{Ku-Ni}), $\|(f_{pl})_0^{(q_m)}\|_{\sup}\leq\operatorname{Var}f$,
thus the sequence $\big(((f_{pl})_0^{(q_n)})_\ast(\mu_{A^n_\epsilon})\big)_{n\geq 0}$
of distributions is uniformly tight.
By
passing to a further subsequence, if necessary, we can also assume
that
\begin{equation}\label{kf9}
\left((f_{pl})_0^{(q_n)}\right)_\ast(\mu_{A^n_\epsilon})\to
P_\epsilon\;\;\text{when}\;n\to\infty.
\end{equation}
Recall that (see e.g.\ \cite{He})
\begin{equation}\label{eq:aczbie}
\|f_{ac}^{(q_n)}\|_{sup}\to 0.
\end{equation}
Set $X_n=\left(f_0^{(2q_n)},f_0^{(q_n)}\right):\T\to\R^2$. Then
$$
\left(f_0^{(2q_n)},f_0^{(q_n)}\right)=\left(
2(f_{pl})_0^{(q_n)}+f_{pl}^{(q_n)}\circ T^{q_n}-f_{pl}^{(q_n)},(f_{pl})_0^{(q_n)}\right)+
\left(f_{ac}^{(2q_n)},f_{ac}^{(q_n)}\right).$$ In view
of~(\ref{kf8}), for $x\in A_\epsilon^n$
\begin{equation}\label{kf10}
X_n=\left(2(f_{pl})_0^{(q_n)},(f_{pl})_0^{(q_n)}\right)+\left(q_n\|q_n\alpha\|S(f)
-C_\epsilon,0\right)+\left(f_{ac}^{(2q_n)},f_{ac}^{(q_n)}\right).
\end{equation}
Let $A:\R\to\R^2$, $Ax=(2x,x)$. Thus
$$
Y_n:=\left(2(f_{pl})_0^{(q_n)},(f_{pl})_0^{(q_n)}\right)=A\circ
f_0^{(q_n)},$$ so by Lemma~\ref{kf3} and~(\ref{kf9}),
\begin{equation}\label{kf11}
\left(Y_n\right)_\ast(\mu_{A_\epsilon^n})\to A_\ast(P_\epsilon).
\end{equation}
Since
$X_n=Y_n+(q_n\|q_n\alpha\|S(f)-C_\epsilon,0)+\left(f_{ac}^{(2q_n)},f_{ac}^{(q_n)}\right)$
on $A^n_\epsilon$, $(f_{ac}^{(2q_n)},f_{ac}^{(q_n)})$ uniformly
tends to zero (see \eqref{eq:aczbie}) and
$q_n\|q_n\alpha\|\to\delta$, in view of
Lemma~\ref{kf2}~\footnote{We apply the lemma for
$\mu_n=\mu_{A^n_{\epsilon}}$, $X_n=Y_n$ and
$C_n=(q_n\|q_n\alpha\|S(f)-C_\epsilon,0)+(f_{ac}^{(2q_n)},f_{ac}^{(q_n)})$.},
\begin{equation}\label{kf12}
\left(X_n\right)_\ast(\mu_{A^n_\epsilon})\to\left(T_{(\delta
S(f)-C_\epsilon,0)}\right)_\ast A_\ast(P_\epsilon).
\end{equation}
Therefore, by Lemma~\ref{kf1},
\begin{equation}\label{kf13}
\left(X_n\right)_\ast(\mu)\to\sum_{\epsilon\in\{0,1\}^K}
p_\epsilon\left(T_{(\delta S(f)-C_\epsilon,0)}\right)_\ast
A_\ast(P_\epsilon)=:P.
\end{equation}
On  the other hand (see Proposition~\ref{main1}),
$\lim_{n\to\infty}\left(X_n\right)_\ast(\mu)=P$, so
\begin{equation}\label{kf13bis}
\sum_{\epsilon\in\{0,1\}^K}
p_\epsilon\left(T_{(\delta S(f)-C_\epsilon,0)}\right)_\ast
A_\ast(P_\epsilon)=P.\end{equation}

\begin{tw}\label{kf15} If $\alpha\in \Lambda$ (see
Lemma~\ref{kf0}) and $S(f)\neq0$ then the special flow $T^f$ is
non-reversible, in fact $T^f$ is not isomorphic to its inverse.\end{tw}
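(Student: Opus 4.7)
The plan is to invoke Corollary~\ref{kryt} by identifying, along a suitable rigidity subsequence, the weak limit of the off-diagonal $3$-joinings $\mu^f_{T^f_{2q_{n_k}},T^f_{q_{n_k}}}$ as an integral self-joining that fails to be $\theta$-invariant. Since $\alpha\in\Lambda$, I first pick a subsequence $(q_{n_k})$ with $q_{n_k}\|q_{n_k}\alpha\|\to\delta$ for some $\delta>0$ chosen small enough that (i) $\delta<1/K$ and (ii) $2\delta S(f)$ avoids the finite set $\mathcal{C}:=\{C_\epsilon:\epsilon\in\{0,1\}^K\}$; both can be arranged because $\mathcal{C}$ is finite and $\Lambda$ allows $\delta$ arbitrarily close to $0$. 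Thinning further (consecutive convergents alternate) I may also arrange $\{q_{n_k}\alpha\}=\|q_{n_k}\alpha\|$, the opposite case being analogous.

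By the Denjoy-Koksma inequality each sequence $\bigl(((f_{pl})_0^{(q_{n_k})})_*\mu_{A^{n_k}_\epsilon}\bigr)_k$ is tight, so a diagonal extraction over the $2^K$ values of $\epsilon$ gives a further subsequence along which $\mu(A^{n_k}_\epsilon)\to p_\epsilon$ and $((f_{pl})_0^{(q_{n_k})})_*\mu_{A^{n_k}_\epsilon}\to P_\epsilon$. Formula~\eqref{kf13bis} then identifies the weak limit of $(f_0^{(2q_{n_k})},f_0^{(q_{n_k})})_*\mu$ as $P=\sum_\epsilon p_\epsilon(T_{(\delta S(f)-C_\epsilon,0)})_*A_*P_\epsilon$. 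The hypotheses of Proposition~\ref{main1} hold for $(q_n^0,q_n^1)=(2q_{n_k},q_{n_k})$ (rigidity from the convergents, $L^2$-boundedness from Denjoy-Koksma, joint convergence just established), so
\[
\mu^f_{T^f_{2q_{n_k}},T^f_{q_{n_k}}}\to\int\mu^f_{T^f_{t_0},T^f_{t_1}}\,dP(t_0,t_1).
\]
Casting this in the form required by Corollary~\ref{kryt} with $Q:=\sigma_*P$ and $\sigma(t,u):=(-t,-u)$, the identity $\sigma\theta\sigma=\theta$ shows that $Q$ is $\theta$-invariant if and only if $P$ is.

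It remains to show $\theta_*P\neq P$. The support of $P$ lies in the finite union of parallel lines $L_\epsilon:=\{(2x+\delta S(f)-C_\epsilon,x):x\in\R\}$, and a direct calculation gives $\theta L_\epsilon=\{(2y+C_\epsilon-\delta S(f),y):y\in\R\}$. In particular $\theta L_{\underline 0}$ equals some $L_{\epsilon'}$ iff $C_{\epsilon'}=2\delta S(f)$, which (ii) forbids; hence $\theta L_{\underline 0}$ is disjoint from $\bigcup_\epsilon L_\epsilon\supseteq\mathrm{supp}(P)$. Using that $[\beta_i-\|q_n\alpha\|,\beta_i)$ is a Rokhlin base of height $q_{n+1}$, a union bound yields $\mu(A^n_{\underline 0})\geq 1-Kq_n\|q_n\alpha\|\to 1-K\delta>0$ by (i), so $p_{\underline 0}>0$. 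Therefore $(\theta_*P)(\theta L_{\underline 0})=P(L_{\underline 0})\geq p_{\underline 0}>0=P(\theta L_{\underline 0})$, whence $\theta_*P\neq P$, and Corollary~\ref{kryt} concludes that $T^f$ is not isomorphic to its inverse. The main obstacle is this final support-disjointness step: it hinges on the ``generic'' combinatorial cell $\epsilon=\underline 0$ carrying positive asymptotic mass combined with the freedom, afforded by $\Lambda$, to choose $\delta$ small enough to dodge the finite obstruction set $\{c/(2S(f)):c\in\mathcal{C}\setminus\{0\}\}$.
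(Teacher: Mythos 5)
Your proposal is correct and follows essentially the same route as the paper: choose $\delta$ via Lemma~\ref{kf0} small enough to avoid the finite obstruction set and to force $p_{\underline 0}>0$, compute the limit measure $P$ as in~\eqref{kf13bis}, and contradict the $\theta$-invariance required by Proposition~\ref{main1} and Corollary~\ref{kryt}. Your final step (the line $\theta L_{\underline 0}$ is parallel to and disjoint from every $L_\epsilon$, yet carries positive $\theta_*P$-mass) is just a geometric rephrasing of the paper's equation $C_\epsilon+C_{\epsilon'}=2\delta S(f)$ applied at $\epsilon=\underline 0$, and your explicit reduction of $\theta$-invariance of $Q=\sigma_*P$ to that of $P$ is a small point the paper leaves implicit.
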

\begin{proof} Take $\delta>0$ so that $K\delta<1$,
$q_n\|q_n\alpha\|\to \delta$ (by passing to a subsequence, if
necessary) and
$$
\delta<\min\{|C_\epsilon|>0,
\epsilon\in\{0,1\}^K\}/(2|S(f)|).$$

Suppose now that the special flow $T^f$ is reversible.  By
Proposition~\ref{main1} and \ref{kryt}, $\theta_\ast P=P$, where
$\theta(t,u)=(t,t-u)$. Using~(\ref{kf13bis}), since
\[
\theta\circ T_{(c,0)}\circ A=T_{(-c,0)}\circ A\circ T_c,
\] we have
\[
\theta_\ast P=\sum_{\epsilon\in\{0,1\}^K}p_\epsilon\left(
T_{(-\delta S(f)+C_\epsilon,0)}\right)_\ast A_\ast\left(
T_{\delta S(f)-C_\epsilon}\right)_\ast(P_\epsilon).
\]
Each
measure of the form $\left(T_{(c,0)}\right)_\ast A_\ast P'$ (with
$P'$ a probability on $\R$) is concentrated on the set
$$
R_c:=\{(2x+c,x):\:x\in\R\}.
$$
Clearly, $R_c\cap
R_{c'}=\emptyset$ for $c\neq c'$. If for some
$\epsilon\in\{0,1\}^K$, $p_\epsilon>0$ and
$\delta S(f)-C_\epsilon\neq0$, since $\theta_\ast P=P$, there must
exist $\epsilon'\in\{0,1\}^K$ such that
$$
p_{\epsilon'}>0\;\;\text{and}\;\;-\delta S(f)+C_{\epsilon}=\delta
S(f)-C_{\epsilon'},
$$
whence
\begin{equation}\label{kf14}
C_\epsilon+C_{\epsilon'}=2\delta S(f).\end{equation}
Then
\begin{align*}A_{\underline{0}}^n&=\{x\in\T:\:\sum_{j=0}^{q_n-1}
{\mathbf1}_{[\beta_i-\|q_n\alpha\|,\beta_i)}(x+j\alpha)
=0\;\;\text{for}\;i=1,\ldots,K\}\\
&=\{x\in\T:\:(\forall 0\leq
j<q_n)(\forall 1\leq i\leq K)\;\;x+j\alpha\notin
[\beta_i-\|q_n\alpha\|,\beta_i)\}\\
&
=\bigcap_{j=0}^{q_n-1}\bigcap_{i=1}^K\left(\T\setminus T^{-j}
[\beta_i-\|q_n\alpha\|,\beta_i)\right)= \T
\setminus\bigcup_{j=0}^{q_n-1}\bigcup_{i=1}^K
T^{-j}[\beta_i-\|q_n\alpha\|,\beta_i).
\end{align*}
It follows that
$$
1-\mu(A^n_{\underline{0}})=\mu\left(\bigcup_{j=0}^{q_n-1}\bigcup_{i=1}^K
T^{-j}[\beta_i-\|q_n\alpha\|,\beta_i)\right)\leq
Kq_n\|q_n\alpha\|,$$ so $\mu(A^n_{\underline{0}})\geq1-Kq_n\|q_n\alpha\|$ and
therefore
$$
\liminf\mu(A^n_{\underline{0}})\geq 1-K\delta>0.$$ Thus
$p_{\underline{0}}>0$ and $\delta S(f)-C_{\underline{0}}=\delta
S(f)\neq0$, it follows from~(\ref{kf14}) (applied to
$\epsilon=\underline{0}$)  that there exists
$\epsilon\in\{0,1\}^K$ such that
$$
C_\epsilon+C_{\underline{0}}=2\delta S(f),$$ whence
$\frac{|C_\epsilon|}{2|S(f)|}=\delta$ which  yields a
contradiction to the definition of $\delta$.
\end{proof}

\subsection{Non-reversibility in the affine case}
Given a special flow $T^f$  for which $\int_Xf\,d\mu=1$,
$T^{r_n},T^{q_n}\to Id$, assume that
$$
(f_0^{(r_n+q_n)},f_0^{(r_n)}, f_0^{(q_n)})\to P$$  with
$\|f_0^{(q_n)}\|_{L^2}, \|f_0^{(r_n)}\|_{L^2}\leq C$. In view of
Proposition~\ref{main1} and Corollary~\ref{krytNOWEd2},  we have:
$$
\mu^{f}_{T^f_{r_n+q_n},T^f_{r_n},T^f_{q_n}}\to \int_{\R^3}
\mu^f_{T^f_{-t},T^f_{-u},T^f_{-v}}\,dP(t,u,v).
$$
For each $(a,b,c)\in\R^3$ we have
$$
\widehat{P}(a,b,c)=\lim_{n\to\infty}\int_Xe^{2\pi i(af_0^{(r_n+q_n)}(x)
+bf_0^{(r_n)}(x)+cf_0^{(q_n)}(x))}\,d\mu(x).$$
Denote $\theta(t,u,v)=(t,t-v,t-u)$ and note that
\begin{equation}\label{uzu1}
\text{$\theta_\ast(P)=P$ if and only if
$\widehat{P}(a,b,c)=\widehat{P}(a+b+c,-c,-a)$.}\end{equation}
Moreover
\begin{align}\label{uzu2}\begin{aligned}
\widehat{P}&(a+b+c,-c,-b)=\lim_{n\to\infty}\int_{X}e^{2\pi
i\big((a+b+c)f_0^{(r_n+q_n)}
-cf_0^{(r_n)}-bf_0^{(q_n)}\big)}\,d\mu\\
&=\lim_{n\to\infty}\int_{X}e^{2\pi i\big(af_0^{(r_n+q_n)}+
bf_0^{(r_n)}+cf_0^{(q_n)}+(b+c)\left(f^{(r_n)}\circ
T^{q_n}-f^{(r_n)}\right)\big)}\,d\mu,
\end{aligned}
\end{align}
note that $f^{(r_n)}\circ T^{q_n}-f^{(r_n)}=f^{(q_n)}\circ T^{r_n}-f^{(q_n)}$.



Consider now the affine case
$$
f(x)=x+c,\;Tx=x+\alpha$$ with $f_0(x)=x-\frac12$ and
$\alpha=[0;a_1,a_2,\ldots]$. Our aim is to get a  larger set of
$\alpha$'s than those resulting  from Theorem~\ref{kf15} for which
the special flow $T^f$ is not isomorphic to its inverse..

\begin{pr}\label{jom}
If there exists a subsequence of denominators $(q_{k_n})_{n\geq
1}$ of $\alpha$ such that
$q_{k_n+1}\|q_{k_n}\alpha\|\to\kappa\in(1/2,1)$ then $T^f$ is not
isomorphic to its inverse.
\end{pr}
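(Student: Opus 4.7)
My plan is to apply the three-joining criterion of Corollary~\ref{krytNOWEd2} with the choice $(r_n, q_n) := (q_{k_n+1}, q_{k_n})$. Both are rigidity sequences for the rotation $T$, and so is $r_n + q_n$. By Denjoy--Koksma, $\|f_0^{(q_n)}\|_{L^2}, \|f_0^{(r_n)}\|_{L^2} \leq V(f_0) = 1$; combined with Minkowski this also bounds $\|f_0^{(r_n+q_n)}\|_{L^2}$. Hence Proposition~\ref{main1}, applied to the three rigidity sequences $q_n^0 = r_n+q_n$, $q_n^1 = r_n$, $q_n^2 = q_n$, yields (along a further subsequence)
\[\mu^f_{T^f_{r_n+q_n}, T^f_{r_n}, T^f_{q_n}} \to \int_{\R^3} \mu^f_{T^f_{-t}, T^f_{-u}, T^f_{-v}}\, dP(t,u,v),\]
where $P$ is the weak limit of $(f_0^{(r_n+q_n)}, f_0^{(r_n)}, f_0^{(q_n)})_{\ast}\mu$ in $\mathcal{P}(\R^3)$. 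It then suffices to show that $\widehat{P}(a,b,c)\ne\widehat{P}(a+b+c,-c,-b)$ for some $(a,b,c)\in\R^3$, which expresses the $\theta$-non-invariance of $P$ for $\theta(t,u,v)=(t,t-v,t-u)$.

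Identity~(uzu2) rewrites the right-hand side as
\[\widehat{P}(a+b+c,-c,-b) = \lim_n \int_X e^{2\pi i\bigl[a f_0^{(r_n+q_n)}+b f_0^{(r_n)}+c f_0^{(q_n)}\bigr]}\cdot e^{2\pi i(b+c)\Delta_n}\, d\mu,\]
where $\Delta_n := f^{(r_n)}\circ T^{q_n} - f^{(r_n)}$. For the affine roof $f(y)=\{y\}+1/2$ (with $K=1$, $\beta_1=0$, $d_1=1$, $S(f)=1$), formula~\eqref{kf5} gives $\Delta_n(x) = r_n\|q_n\alpha\| - N_n(x)$ with $N_n(x):=\#\{0\le j<r_n: x+j\alpha\in [1-\|q_n\alpha\|, 1)\}$. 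The choice $r_n = q_{k_n+1}$ is made precisely so that the best-approximation property ($\|j\alpha\|>\|q_{k_n}\alpha\|$ for $0<j<q_{k_n+1}$) forces the sets $T^{-j}[1-\|q_n\alpha\|,1)$, $0\le j<r_n$, to be pairwise disjoint. Consequently $N_n\in\{0,1\}$ with $\mu\{N_n=1\}=r_n\|q_n\alpha\|=\kappa_n\to\kappa$, and $\Delta_n$ converges in distribution to the two-valued random variable $Z$ taking value $\kappa$ with probability $1-\kappa$ and $\kappa-1$ with probability $\kappa$.

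The central step is an \emph{asymptotic decorrelation}: the indicator $\mathbf{1}_{\{N_n=1\}}$ is asymptotically independent (under $\mu$) of the triple $(f_0^{(r_n+q_n)},f_0^{(r_n)},f_0^{(q_n)})$. Granting this, the decomposition
\[e^{2\pi i(b+c)\Delta_n}=e^{2\pi i(b+c)\kappa_n}\Bigl[1+(e^{-2\pi i(b+c)}-1)\mathbf{1}_{\{N_n=1\}}\Bigr]\]
and passage to the limit give $\widehat{P}(a+b+c,-c,-b)=\widehat{P}(a,b,c)\cdot\phi_Z(b+c)$, with $\phi_Z(t)=(1-\kappa)e^{2\pi it\kappa}+\kappa e^{2\pi it(\kappa-1)}$. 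A direct computation yields $|\phi_Z(t)|^2 = 1-2\kappa(1-\kappa)(1-\cos 2\pi t)$, which is strictly less than $1$ for $t\notin\Z$ (since $\kappa(1-\kappa)>0$ for $\kappa\in(1/2,1)$); for $k\in\Z$ one has $\phi_Z(k)=e^{2\pi i k\kappa}$, so $\{\phi_Z=1\}$ is contained in a discrete proper sublattice of $\R$. Since $\widehat P$ is continuous with $\widehat P(0)=1$, choosing $(a,b,c)$ sufficiently close to the origin with $b+c\ne 0$ and $\phi_Z(b+c)\ne 1$ yields $\widehat{P}(a+b+c,-c,-b)\ne\widehat{P}(a,b,c)$, hence $\theta_{\ast}P\ne P$, and Corollary~\ref{krytNOWEd2} applies.

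The principal obstacle is the asymptotic decorrelation claim. Intuitively it is clear: $N_n$ depends on $x$ through whether its $T$-orbit up to time $r_n=q_{k_n+1}$ visits an interval of microscopic length $\|q_n\alpha\|\sim 1/q_{k_n+1}$, while the Birkhoff sums $f_0^{(q_n)},f_0^{(r_n)},f_0^{(r_n+q_n)}$ are $L^\infty$-controlled by Denjoy--Koksma and, by the three-distance theorem, well-approximated by functions constant at a coarser scale. Making this precise will be the technical core of the proof, combining an approximation by step functions subordinated to the $A^n_\varepsilon$-partition with a Weyl-type equidistribution estimate for the orbit of length $r_n$.
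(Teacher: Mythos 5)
Your setup (Corollary~\ref{krytNOWEd2} with $(r_n,q_n)=(q_{k_n+1},q_{k_n})$, the identity \eqref{uzu2}, and the analysis of $\Delta_n=f^{(r_n)}\circ T^{q_n}-f^{(r_n)}$) is the same as the paper's, but the step you yourself flag as the technical core --- asymptotic independence of ${\mathbf 1}_{\{N_n=1\}}$ from the triple $X_n:=(f_0^{(r_n+q_n)},f_0^{(r_n)},f_0^{(q_n)})$ --- is not merely unproved, it is false. By the cocycle identity $f^{(r_n+q_n)}=f^{(q_n)}+f^{(r_n)}\circ T^{q_n}$ one has $\Delta_n=f_0^{(r_n+q_n)}-f_0^{(r_n)}-f_0^{(q_n)}$ exactly, so $N_n=\pm r_n\|q_n\alpha\|-\Delta_n$ is a fixed affine function of $X_n$. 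If $X_n$ converges in distribution to $X$ and $g$ is continuous, then $(X_n,g(X_n))$ converges to $(X,g(X))$, and asymptotic independence would force $g(X)$ to be a.s.\ constant; here the limit law of $N_n$ is Bernoulli with parameter $\kappa\in(1/2,1)$, hence nondegenerate. So the claimed decorrelation cannot hold, and with it the product formula $\widehat{P}(a+b+c,-c,-b)=\widehat{P}(a,b,c)\,\phi_Z(b+c)$ for non-integer $b+c$ collapses; your plan of choosing $(a,b,c)$ near the origin with $b+c\notin\Z$ has no support.

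The paper avoids this by never leaving integer frequencies. For the affine roof, the computation behind \eqref{kf5} gives $\Delta_n=\pm q_{k_n+1}\|q_{k_n}\alpha\|+M_n(x)$ with $M_n$ integer-valued, so for $b+c=l\in\Z$ the factor $e^{2\pi il\Delta_n}=e^{\pm2\pi ilq_{k_n+1}\|q_{k_n}\alpha\|}$ is a deterministic unimodular constant tending to $e^{\pm2\pi il\kappa}$ --- no independence is needed. Assuming $\theta_\ast P=P$, \eqref{uzu2} then yields $\widehat{P}(a,b,c)=e^{\pm2\pi i(b+c)\kappa}\widehat{P}(a,b,c)$ whenever $b+c\in\Z$, whence $\widehat{P}(1,-1,-1)=0$ because $e^{\pm4\pi i\kappa}\neq1$ for $\kappa\in(1/2,1)$. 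On the other hand, $f_0^{(r_n+q_n)}-f_0^{(r_n)}-f_0^{(q_n)}=\Delta_n$ itself, so $e^{2\pi i(X_n^1-X_n^2-X_n^3)}=e^{\pm2\pi iq_{k_n+1}\|q_{k_n}\alpha\|}$ and $|\widehat{P}(1,-1,-1)|=1$, a contradiction. If you replace your decorrelation step by this integer-frequency argument, the rest of your setup goes through unchanged.
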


\begin{proof}
To simplify notation we will write $n$ instead of $k_n$.

Suppose that $T^f$ is isomorphic to its inverse. In view of
Corollary~\ref{krytNOWEd2} and \eqref{uzu1}, if
$(f_0^{(q_{n+1}+q_n)},f_0^{(q_{n+1})}, f_0^{(q_n)})\to P$ then
$\widehat{P}(a,b,c)=\widehat{P}(a+b+c,-c,-a)$ for each
$a,b,c\in\R$. We have
\[f_0^{(q_{n+1})}(T^{q_n}x)-f_0^{(q_{n+1})}(x)=\sum_{j=0}^{q_{n+1}-1}(f(T^jx+q_n\alpha)-f(T^jx))\]
and, by Remark~\ref{kf4}, $f(y+q_n\alpha)-f(y)\in
\pm\|q_n\alpha\|+\Z$ for any $y\in\T$. Thus
\begin{align*}
f_0^{(q_{n+1})}(T^{q_n}x)-f_0^{(q_{n+1})}(x)=\pm
q_{n+1}\|q_n\alpha\|+M_n(x)\text{ with  }M_n(x)\in\Z.
\end{align*}
It follows that
\[ e^{2\pi il(f_0^{(q_{n+1})}(T^{q_n}x)-f_0^{(q_{n+1})}(x))}=e^{\pm 2\pi
il q_{n+1}\|q_n\alpha\|}\to e^{\pm 2\pi i l\kappa}\] for each
integer $l$. By our standing assumption, $e^{4\pi i \kappa}\neq1$.
Taking into account~(\ref{uzu2}) we obtain that
\[\widehat{P}(a,b,c)=\widehat{P}(a+b+c,-a,-c)=e^{\pm 2\pi i
(b+c)\kappa}\widehat{P}(a,b,c)\text{ whenever } b+c\in\Z,\] hence
\begin{equation}\label{sprz}
\widehat{P}(1,-1,-1)=0.
\end{equation}
On the other hand, the function
\begin{align*}
f_0^{(q_{n+1}+q_n)}(x)-f_0^{(q_{n+1})}(x)-f_0^{(q_n)}(x)
&=f_0^{(q_{n+1})}(T^{q_n}x)-f_0^{(q_{n+1})}(x)\\ &=\pm
q_{n+1}\|q_n\alpha\|+M_n(x),
\end{align*}
so
\begin{align*}
|\widehat{P}(1,-1,-1)|&=\lim_{n\to\infty}\Big|\int_{\T} e^{2\pi
i\big(f_0^{(q_{n+1}+q_n)}(x)-f_0^{(q_{n+1})}(x)-f_0^{(q_n)}(x)\big)}\,dx\Big|\\
& =\lim_{n\to\infty}\big| e^{\pm 2\pi
iq_{n+1}\|q_n\alpha\|}\big|=1.
\end{align*}
This implies $|\widehat{P}(1,-1,-1)|=1$ which gives rise to a
contradiction to~(\ref{sprz}).
\end{proof}

\begin{uw}
Since $\alpha=\frac{p_{n+1}+p_n
G^{n+1}(\alpha)}{q_{n+1}+q_nG^{n+1}(\alpha)}$ (see e.g.\
\cite{Ei-Wa}), we have
\[\frac{1}{1+\frac{1}{a_{n+1}}\frac{1}{a_{n+2}}}<
q_{n+1}\|q_n\alpha\|<\frac{1}{1+\frac{1}{a_{n+1}+1}\frac{1}{a_{n+2}+1}}\]
and
\[\frac{1}{1+\frac{1}{a_{n+1}+\frac{1}{a_n+1}}\frac{1}{a_{n+2}+\frac{1}{a_{n+3}+1}}}<
q_{n+1}\|q_n\alpha\|<\frac{1}{1+\frac{1}{a_{n+1}+\frac{1}{a_n}}\frac{1}{a_{n+2}+\frac{1}{a_{n+3}}}}.\]
Therefore
\[q_{k_n+1}\|q_{k_n}\alpha\|\to 1 \quad\Leftrightarrow\quad a_{k_n+1}+a_{k_n+2}\to+\infty\]
and
\[q_{k_n+1}\|q_{k_n}\alpha\|\to 1/2 \quad\Leftrightarrow\quad a_{k_n+1}=a_{k_n+2}=1\text{ and }
a_{k_n},a_{k_n+3}\to+\infty.\] The set of excluded irrational
rotations $E\subset\T$ in Theorem~\ref{jom} consists of all
irrational $\alpha$ for which the set of limit points of the
sequence $(q_{n+1}\|q_n\alpha\|)_{n\geq 1}$ is $\{1/2,1\}$.
Therefore $\alpha\in E$ if and only if the set of limit points of
the sequence $(a_n+a_{n+1})_{n\geq 1}$ is $\{2,+\infty\}$ and if
there exists a subsequence $(a_{k_n})_{n\geq 1}$ such that
$a_{k_n}=a_{k_n+1}=1$ then $a_{k_n-1},a_{k_n+2}\to+\infty$.
\end{uw}

\begin{uw}\label{dlacz}
A natural question arises  whether we could apply~(\ref{uzu2})
choosing a sequence of pairs of denominators, say we consider
$q_{l_n},q_{k_n}$, $n\geq 1$ when $\alpha$ is Liouville in the
sense that the sequence of partial quotients tends to infinity.
This approach seems to fail whenever $f$ is of bounded variation .
Indeed,
$$
\left|f_0^{(q_{l_n})}\circ T^{q_{k_n}}-f_0^{(q_{l_n})}\right|\leq
\|q_{k_n}\alpha\|{\rm Var} f_0^{(q_{l_n})}\leq
q_{k_n}\|q_{l_n}\alpha\|{\rm Var} f_0$$ and
$q_{k_n}\|q_{l_n}\alpha\|\to0$  whenever $\alpha$ is a Liouville
number.
\end{uw}

\section{Piecewise polynomial roof functions}\label{polyroof}
Let $r\geq 1$ be an odd number and let $0<\beta<1$. In this
section we will study the problem of isomorphism to the inverse
for special flows $T^f$ built over irrational rotations
$Tx=x+\alpha$ on the circle and under $C^{r-1}$-function which are
polynomials after restriction to intervals $[0,\beta)$ and
$[\beta,1)$.

Let us consider a $C^{r-1}$-function $f:\T\to\R_+$ such that
$D^{r-1}f$ is a function linear on both intervals $[0,\beta)$ and
$[\beta,1)$ with slopes $1-\beta$ and $-\beta$ respectively.
Therefore, $D^{r-1}f$ is an absolutely continuous function whose
derivative is equal to
\[D^{r}f=(1-\beta)1_{[0,\beta)}-\beta 1_{[\beta,1)}.\]
Thus $f$ restricted to each interval  $[0,\beta)$ and $[\beta,1)$
is a polynomial of degree $r$ with leading coefficients
$(1-\beta)/r!$ and $-\beta/r!$ respectively. Since $D^{r-1}f$ is
absolutely continuous and $D^rf$ is of bounded variation, the
Fourier coefficients satisfy
$\widehat{f}(n)=\operatorname{O}(1/|n|^{r+1})$.

If the irrational number $\alpha$ is slowly approximated by
rationals, more precisely
$\liminf_{n\to\infty}q_n^{r+1-\epsilon}\|q_n\alpha\|>0$ for some
$\epsilon>0$, then $f$ is cohomologous to a constant function, so
the special flow $T^f$ is isomorphic to its inverse. In this
section we deal with rotations satisfying
\[0<\limsup_{n\to\infty}q_n^{r+1}\|q_n\alpha\|.\]
Note that we can not expect non-reversibility of $T^f$ for any
$\beta$. Indeed, if $\beta\in\Z\alpha\cup(\Z\alpha+1/2)$ then
$D^{r-1}f$ is cohomologous to either zero function or a function
which is $x\mapsto 1-x$ invariant. Since $r-1$ is even, $f$ is
also cohomologous to either a constant function or a function
which is $x\mapsto 1-x$ invariant. In both cases $T^f$ is
isomorphic to its inverse.

The  main result of this section (Theorem~\ref{thm:poly})
establishes some technical conditions on $\alpha$ that gives
non-isomorphism of $T^f$ to its inverse for almost every choice of
$\beta\in\T$.

\begin{uw}\label{uw:pochdys}
In the proof of Theorem~\ref{thm:poly} we will use simple
properties of the following standard difference operator. For any
$h>0$ let us consider the difference operator
\[\Delta_h:\R^{[a,b]}\to\R^{[a,b-h]},\quad \Delta_hg(x)=g(x+h)-g(x).\]
For every natural $r$ denote by
$\Delta^r_h:\R^{[a,b]}\to\R^{[a,b-rh]}$ the $r$-th iteration of
the operator $\Delta_h$. By induction and using ${r \choose
k-1}+{r \choose k}={r+1 \choose k}$, we have the following
standard formula
\begin{equation}
\Delta^r_hg(x)=\sum_{k=0}^r(-1)^{r-k}{r \choose
k}g(x+kh)\quad\text{ for }\quad x\in[a,b-rh].
\end{equation}
Moreover, if $g$ is a polynomial function of degree $r$ with
leading coefficient $a_r$ then $\Delta^r_hg$ is a constant
function equal to $r!a_rh^r$.
\end{uw}

\begin{tw}\label{thm:poly}
Suppose that $\alpha$ is an irrational number for which there
exists a subsequence of denominators $(q_{k_n})_{n\geq 1}$ such
that
\begin{equation}\label{eq:diof}
q_{k_n}^{r+1}\|q_{k_n}\alpha\|\to\kappa\in\left(0,\frac{1}{2(r+1)}\right).
\end{equation}
Then for almost every $\beta\in\T$ the special flow $T^f$ is not
isomorphic to its inverse.
\end{tw}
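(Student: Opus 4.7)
The plan is to invoke Corollary~\ref{krytNOWEd2prime} with $d=r$ and the rigidity sequence $q_n:=q_{k_n}$ of $T$. Applying Proposition~\ref{main1} with $q_n^i:=(r-i)q_n$ for $0\leq i\leq r-1$, the off-diagonal $(r+1)$-self-joinings $\mu^f_{T^f_{rq_n},\ldots,T^f_{q_n}}$ converge (along a subsequence, extracted using tightness of the Birkhoff-sum distributions obtained from $L^2$-boundedness of $f_0^{(jq_n)}$) to the integral in \eqref{w1konkret}, where $P\in\mathcal{P}(\R^r)$ is the weak limit of the pushforward
\[
\bigl(f_0^{(rq_n)},\,f_0^{((r-1)q_n)},\,\ldots,\,f_0^{(q_n)}\bigr)_\ast\mu.
\]
By Corollary~\ref{krytNOWEd2prime} the theorem reduces to showing that, for almost every $\beta\in\T$, $P$ is not invariant under the involution $\theta(t_0,\ldots,t_{r-1})=(t_0,t_0-t_{r-1},\ldots,t_0-t_1)$.

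The computation of $P$ is the technical heart and is a higher-order version of the analysis of Section~\ref{sec:vonn}. Split $f=f_{pol}+f_{sm}$ with $f_{sm}$ a smooth correction such that $\|f_{sm}^{(jq_n)}\|_\infty\to 0$ (an analogue of \eqref{eq:aczbie}), leaving $f_{pol}$ carrying the full piecewise-polynomial structure. The key device is the $(r+1)$st-order difference operator $\Delta_h^{r+1}$ from Remark~\ref{uw:pochdys}: since each polynomial piece of $f_{pol}$ has degree $r$, $\Delta_h^{r+1}f_{pol}(y)$ vanishes on any window $\{y+kh:0\leq k\leq r+1\}$ contained in a single polynomial piece, and across a jump it equals $\pm h^r\psi(t)/r!$, where $t\in(0,r+1)$ records the relative position of the jump in the window and the factor $1/r!$ comes from the explicit identity $p_2(x)-p_1(x)=-(x-\beta)^r/r!$ (and its analogue at $0$), which is forced by $C^{r-1}$-matching of pieces. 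Telescoping $\Delta_{q_n\alpha}^{r+1}$ along the Rokhlin tower of height $q_n$ over a base of length $h_n=\|q_n\alpha\|$ produces a finite partition $\{A^n_{\bar\vep}\}$ of $\T$ indexed by the combinatorial configurations of jump-crossings of the sliding $(r+1)$-point window. On each $A^n_{\bar\vep}$ the differences $f_0^{(jq_n)}-jf_0^{(q_n)}$ are asymptotically constant, equal to an explicit affine function of $\kappa$, $\beta$ and $\bar\vep$; combined with Lemmas~\ref{kf1}--\ref{kf3} and the limiting distribution of $f_0^{(q_n)}$ itself, this identifies $P$ as a finite convex combination of measures supported on explicit affine subspaces of $\R^r$.

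The verification $\theta_\ast P\neq P$ then follows the blueprint of Theorem~\ref{kf15}. A counting argument gives $\mu(A^n_{\bar\vep})\geq 1-2(r+1)q_n\|q_n\alpha\|$ for the ``clean'' configuration $\bar\vep=\underline 0$, and the hypothesis $\kappa<1/(2(r+1))$ in \eqref{eq:diof} guarantees strictly positive limiting mass for this and a handful of neighbouring configurations, so the corresponding affine subspaces are genuine constituents of $\mathrm{supp}\,P$. Their defining constants depend polynomially in $\beta$ with coefficients involving $\kappa$ and integer combinations of the jump sizes $\pm 1$ of $D^rf$. Imposing $\theta_\ast P=P$ forces the image $\theta(\mathrm{supp}\,P)$ to match $\mathrm{supp}\,P$; writing this out produces a nontrivial polynomial equation in $\beta$ of degree at most $r$, whose solution set has Lebesgue measure zero. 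Ranging over the finitely many possible pairings of atoms completes the proof.

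The main obstacle is Step~2, the identification of $P$. Unlike Section~\ref{sec:vonn}, where a single step-function discrepancy on a one-dimensional scale determined everything, here one tracks \emph{two} jump points $0,\beta$ of $D^rf$ simultaneously interacting with an $(r+1)$-point sliding window, and must verify that the $r$ intermediate Taylor corrections arising from $r$-fold integration of $D^rf$ (which is of bounded variation but not continuous) remain negligible at the dominant scale $h_n^r$ set by \eqref{eq:diof}. The bound $\kappa<1/(2(r+1))$ is exactly what is needed to keep the combinatorial partition $\{A^n_{\bar\vep}\}$ well-defined and to prevent the clean atoms of $P$ from being washed out, so that the asymmetry under $\theta$ survives in the limit.
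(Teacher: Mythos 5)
Your overall blueprint (reduce to Corollary~\ref{krytNOWEd2prime}, identify the limit measure $P$ via difference operators and a combinatorial partition, then contradict $\theta_*P=P$) matches the paper's strategy, but the concrete setup contains errors that would make the argument collapse. The decisive one is the time scale: you take the off-diagonal times to be $jq_{k_n}$, $j=1,\dots,r$, whereas the paper works with $jq_{k_n}^{r+1}$, $j=1,\dots,r+1$ (so $d=r+1$, not $d=r$, and $P\in\mathcal{P}(\R^{r+1})$). Under hypothesis~\eqref{eq:diof} one has $\|q_n\alpha\|\sim\kappa/q_n^{r+1}$, so at scale $q_n$ the jump contributions you describe are of order $\|q_n\alpha\|^r\to 0$ and the vector $(f_0^{(rq_n)},\dots,f_0^{(q_n)})$ degenerates; the resulting $P$ is symmetric under $\theta$ and the criterion yields nothing. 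The condition $q_n^{r+1}\|q_n\alpha\|\to\kappa$ is precisely the calibration that makes the Birkhoff sums over $q_n^{r+1}$ iterates nondegenerate: on the set $A_n$ of points whose window avoids the singularities $0$ and $\beta$, the function $f_0^{(q_n^{r+1})}$ is a genuine degree-$r$ polynomial on $[x,x+rh]$ with $h=q_n^r\|q_n\alpha\|$, and the \emph{$r$-th} (not $(r+1)$-st) difference $\Delta_h^r$ extracts its leading coefficient, giving the nonzero constant $(s-\{q_n\beta\})(q_n^{r+1}\|q_n\alpha\|)^r\to(s-\gamma)\kappa^r$. Your use of $\Delta_h^{r+1}$ to annihilate the polynomial part and detect jump crossings is the opposite mechanism and, at your scale, produces only vanishing quantities. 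Relatedly, the telescoping identity that converts $\Delta_h^rf_0^{(q_n^{r+1})}(x)$ into the alternating sum $\sum_{k=1}^{r+1}(-1)^{r+1-k}\binom{r+1}{k}f_0^{(kq_n^{r+1})}(x)$ is exactly why $r+1$ coordinates are needed; with $d=r$ the linear constraint defining the supporting affine subspaces cannot even be written.

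Two further points. First, the ``almost every $\beta$'' in the statement does not come from excluding zeros of a polynomial equation of degree $r$ in $\beta$, as you suggest; it comes from Weyl's theorem: for a.e.\ $\beta$ the sequence $(\{q_{k_n}\beta\})$ equidistributes, so one can extract a subsequence with $\{q_n\beta\}\to\gamma\notin\{0,1/2\}$, and the final contradiction is simply that the constant sets $\{-\gamma\kappa^r,(1-\gamma)\kappa^r\}$ and $\{\gamma\kappa^r,-(1-\gamma)\kappa^r\}$ are disjoint while each of the corresponding unions of affine subspaces must carry mass $>1/2$ (this is where $\nu_0+\nu_1\geq 1-2(r+1)\kappa>1/2$, i.e.\ the bound $\kappa<1/(2(r+1))$, is used). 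Second, the oddness of $r$ enters concretely in verifying the intertwining relation $\theta\circ R_c\circ A=R_{-c}\circ A\circ B_c$ (via $\sum_{l=1}^{r}(-1)^{l+1}\binom{r+1}{l}=2$); your sketch never locates this. As written, your Step~2 --- which you yourself flag as the main obstacle --- is not carried out, and with the chosen time scale it cannot be.
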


\begin{proof}
By Weyl's theorem (see Theorem 4.1 in \cite{Ku-Ni}), for almost
every $\beta\in\T$ the sequence $(\{q_{k_n}\beta\})_{n\geq 1}$ is
uniformly distributed in $[0,1)$. It follows that there exists
$\gamma\in(0,1)\setminus\{1/2\}$ and a subsequence $(q_{k_{l_n}})$
such that $\{q_{k_{l_n}}\beta\}\to\gamma$. To simplify notation we
will write $n$ instead of $k_{l_n}$. Assume also that
$\{q_n\alpha\}=\|q_n\alpha\|$. The case where
$\{q_n\alpha\}=1-\|q_n\alpha\|$ can be treated in a similar way.

Suppose that $T^f$ is isomorphic to its inverse. Since
$\widehat{f}(n)=\operatorname{O}(1/|n|^{r+1})$, in view of
Corollary~3.1 in \cite{Aa-Le-Ma-Na}, the sequence
$\big(f_0^{(q^{r+1}_n)}\big)_{n\geq 1}$ is bounded in $L^2$.
Therefore, by passing to a further subsequence, if necessary, we
 can assume that
\[\Big(f_0^{((r+1)q^{r+1}_n)},f_0^{(rq^{r+1}_n)},\ldots,
f_0^{(q^{r+1}_n)}\Big)_*(\mu)\to P\quad\text{ in
}\quad\mathcal{P}(\R^{r+1}).\] Since
$D^rf=(1-\beta)1_{[0,\beta)}-\beta 1_{[\beta,1)}$, by the
Koksma-Denjoy inequality\footnote{$D^*_{q_n}$ is the discrepancy
of the sequence $\{0,\alpha,\ldots,(q_n-1)\alpha\}$.} (see
\cite{Ku-Ni}),
\[|(D^rf)^{(q_n)}(x)|\leq q_n D^*_{q_n}(\alpha)
\operatorname{Var}_{[0,1)}(D^rf)\leq 1+\frac{q_n}{q_{n+1}}.\]
The function $(D^rf)^{(q_n)}$ takes values only in the set
$\Z-q_n\beta$ and $\{q_{n}\beta\}\to\gamma\in(0,1)$. Since
$q_n/q_{n+1}\to 0$, it follows that for all $n$ large enough
$(D^rf)^{(q_n)}(x)$ is equal to $1-\{q_n\beta\}$ or
$-\{q_n\beta\}$ for every $x\in\T$.

Let
\[A_n:=\T\setminus\Big(\bigcup_{j=0}^{q_n-1}T^{-j}[1-(r+1)q^r_n\|q_n\alpha\|,1]\cup
\bigcup_{j=0}^{q_n-1}T^{-j}[\beta-(r+1)q^r_n\|q_n\alpha\|,\beta]\Big).\]
Thus, by \eqref{eq:diof},
\begin{equation}\label{eq:miaraan}
\mu(A_n)\geq 1-2(r+1)q_nq_n^{r}\|q_n\alpha\|\to
1-2(r+1)\kappa>1/2.
\end{equation}
Moreover, for every $x\in A_n$ the point $0$ and $\beta$ do not
belong to any interval $T^j[x,T^{(r+1)q_n^{r+1}}x]$ for all $0\leq
j<q_n$. It follows that $(D^rf)^{(q_n)}$ on
$[x,T^{(r+1)q_n^{r+1}}x]$ is constant and equal $s-\{q_n\beta\}$
for some $s\in\{0,1\}$. Therefore, for every $y\in
[x,T^{rq_n^{r+1}}x]$ and $0\leq j<q^r_n$ we have
\[T^{jq_n}y\in[T^{jq_n}x,T^{jq_n+rq_n^{r+1}}x]\subset[x,T^{(r+1)q_n^{r+1}}x],\]
so
\[(D^rf)^{(q^{r+1}_n)}(y)=\sum_{j=0}^{q^r_n-1}(D^rf)^{(q_n)}(T^{jq_n}y)=q^r_n(s-\{q_n\beta\}).\]
Therefore, for every $x\in A_n$ there exists $s=s(x)\in\{0,1\}$
such that $D^r(f_0^{(q^{r+1}_n)})=q^r_n(s-\{q_n\beta\})$ on
$[x,x+rq_n^r\|q_n\alpha\|]$, so $f_0^{(q^{r+1}_n)}$ restricted to
$[x,x+rh]$, with $h:=q_n^r\|q_n\alpha\|$, is a polynomial of
degree $r$ with leading coefficient $q^r_n(s-\{q_n\beta\})/r!$. In
view of Remark~\ref{uw:pochdys}, it follows that
\begin{align*}
\sum_{k=0}^r&(-1)^{r-k}{r \choose
k}f_0^{(q^{r+1}_n)}(T^{kq_n^{r+1}}x)= \sum_{k=0}^r(-1)^{r-k}{r
\choose
k}f_0^{(q^{r+1}_n)}(x+kh)\\
&=\Delta_h^rf_0^{(q^{r+1}_n)}(x)=q^r_n(s(x)-\{q_n\beta\})h^r=
(s(x)-\{q_n\beta\})\big(q^{r+1}_n\|q_n\alpha\|\big)^r.
\end{align*}
Moreover, {\allowdisplaybreaks
\begin{align*}
\sum_{k=1}^{r+1}&(-1)^{r+1-k}{r+1 \choose
k}f_0^{(kq^{r+1}_n)}(x)\\&=\sum_{k=1}^{r+1}(-1)^{r+1-k}{r+1
\choose
k}\sum_{l=0}^{k-1}f_0^{(q^{r+1}_n)}(T^{lq_n^{r+1}}x)\\
&\
=\sum_{l=0}^{r}f_0^{(q^{r+1}_n)}(T^{lq_n^{r+1}}x)\sum_{k=l+1}^{r+1}(-1)^{r+1-k}{r+1
\choose k}\\&
=\sum_{l=0}^{r}f_0^{(q^{r+1}_n)}(T^{lq_n^{r+1}}x)\sum_{k=l+1}^{r+1}(-1)^{r+1-k}\Big({r
\choose k}+{r \choose k-1}\Big)
\\&
=\sum_{l=0}^{r}(-1)^{r-l}{r \choose
l}f_0^{(q^{r+1}_n)}(T^{lq_n^{r+1}}x)=(s(x)-\{q_n\beta\})\big(q^{r+1}_n\|q_n\alpha\|\big)^r.
\end{align*}}
For $s=0,1$ set
$c^n_s:=(s-\{q_n\beta\})\big(q^{r+1}_n\|q_n\alpha\|\big)^r$ and
let
\[A_n^s=\Big\{x\in A_n:\sum_{k=1}^{r+1}(-1)^{r+1-k}{r+1 \choose
k}f_0^{(kq^{r+1}_n)}(x)=c^n_s\Big\}.\] By passing to a further
subsequence, if necessary, we can assume that
\begin{equation}\label{eq:mia1}
\mu(A_n^0)\to\nu_0,\quad \mu(A_n^1)\to\nu_1,
\end{equation}
\begin{equation}\label{eq:mia2}
\big(f_0^{(rq_n^{r+1})},\ldots,f_0^{(q_n^{r+1})}\big)_*(\mu_{A_n^0})\to
P_0,\quad \quad
\big(f_0^{(rq_n^{r+1})},\ldots,f_0^{(q_n^{r+1})}\big)_*(\mu_{A_n^1})\to
P_1
\end{equation} in $\mathcal{P}(\R^r)$ and
\begin{equation}\label{eq:mia3}
\big(f_0^{((r+1)q_n^{r+1})},\ldots,f_0^{(q_n^{r+1})}\big)_*(\mu_{A_n^c})\to
P_2\text{ in }\mathcal{P}(\R^{r+1}).
\end{equation} Since
$A_n=A_n^0\cup A_n^1$, by \eqref{eq:miaraan}, we have
\[\nu_0+\nu_1\geq 1-2(r+1)\kappa>1/2.\]
Set $\nu_2:=1-\nu_0-\nu_1$. Let us consider the following maps:
\begin{align*}
\theta:\R^{r+1}\to\R^{r+1},&\quad \theta(x_0,x_1,\ldots,x_r)=(x_0,x_0-x_r,\ldots,x_0-x_1) \\
A:\R^r\to\R^{r+1},&\quad
A(x_1,x_2,\ldots,x_r)=\left(\sum_{k=1}^{r}(-1)^{k+1}{r+1 \choose
k}x_k,x_1,\ldots,x_r\right),\\
R_c:\R^{r+1}\to\R^{r+1},&\quad
R_c(x_0,x_1,\ldots,x_r)=(x_0+c,x_1,\ldots,x_r),\\
B_c:\R^{r}\to\R^{r},&\quad
B_c(x_1,\ldots,x_r)=\left(\sum_{k=1}^{r}(-1)^{k+1}{r+1 \choose
k}x_k-x_{r+1-l}+c\right)_{l=1}^r.
\end{align*}
Then
\begin{equation}\label{eq:miksty}
\theta\circ R_c\circ A=R_{-c}\circ A\circ B_c.
\end{equation}
Indeed, equation \eqref{eq:miksty} is valid directly for last $r$
coordinates.  The zero coordinate of the LHS of \eqref{eq:miksty}
is
\[\text{LHS}_0:=\sum_{k=1}^{r}(-1)^{k+1}{r+1 \choose
k}x_k+c.\] The zero coordinate of the RHS of \eqref{eq:miksty} is
\[
\text{RHS}_0:=-c+\sum_{l=1}^{r}(-1)^{l+1}{r+1 \choose
l}\left(\sum_{k=1}^{r}(-1)^{k+1}{r+1 \choose
k}x_k-x_{r+1-l}+c\right).
\]
Since $r$ is odd,
\[\sum_{l=1}^{r}(-1)^{l+1}{r+1 \choose
l}=\sum_{l=0}^{r+1}(-1)^{l+1}{r+1 \choose l}+2=-(1-1)^{r+1}+2=2,\]
thus
\begin{align*}
\text{RHS}_0&=-c+2c+2\sum_{k=1}^{r}(-1)^{k+1}{r+1 \choose k}x_k
-\sum_{l=1}^{r}(-1)^{l+1}{r+1 \choose l}x_{r+1-l}\\
&=c+\sum_{k=1}^{r}(-1)^{k+1}{r+1 \choose k}x_k=\text{LHS}_0,
\end{align*}
which completes the proof of \eqref{eq:miksty}.

Since
\begin{multline*}\Big(f_0^{((r+1)q_n^{r+1})},f_0^{(rq_n^{r+1})},\ldots,f_0^{(q_n^{r+1})}\Big)=
\left(\sum_{k=1}^{r+1}(-1)^{k}{r+1 \choose
k}f_0^{(kq^{r+1}_n)},0,\ldots,0\right)\\+
\left(\sum_{k=1}^{r}(-1)^{k+1}{r+1 \choose
k}f_0^{(kq^{r+1}_n)},f_0^{(rq_n^{r+1})},\ldots,f_0^{(q_n^{r+1})}\right),
\end{multline*}
by the definitions of maps $A$, $R_c$ and the set $A_n^s$, it
follows that for any $x\in A_n^s$ and $s=0,1$ we have
\[\big(f_0^{((r+1)q_n^{r+1})}(x),\ldots,f_0^{(q_n^{r+1})}(x)\big)=
R_{c^n_s}\circ
A\big(f_0^{(rq_n^{r+1})}(x),\ldots,f_0^{(q_n^{r+1})}(x)\big).\]
Since additionally $c_s^n\to(s-\gamma)\kappa^r$ as $n\to\infty$
for $s=0,1$, by Lemmas~\ref{kf1},~\ref{kf2},~\ref{kf3} and
combined with \eqref{eq:mia1}, \eqref{eq:mia2}, \eqref{eq:mia3},
we have
\begin{multline*}
\big(f_0^{((r+1)q_n^{r+1})}(x),\ldots,f_0^{(q_n^{r+1})}(x)\big)_*(\mu)\\
\to \nu_0 \cdot(R_{-\gamma\kappa^r})_*A_*(P_0)+\nu_1\cdot
(R_{(1-\gamma)\kappa^r})_*A_*(P_1)+\nu_2\cdot P_2.
\end{multline*}
Therefore,
\begin{equation}\label{eq:wzorp}
P=\nu_0 \cdot(R_{-\gamma\kappa^r})_*A_*(P_0)+\nu_1\cdot
(R_{(1-\gamma)\kappa^r})_*A_*(P_1)+\nu_2 \cdot P_2.
\end{equation}
As $T^f$ is isomorphic to its inverse, by
Corollary~\ref{krytNOWEd2prime}, $\theta_*(P)=P$. In view of
\eqref{eq:miksty}, it follows that the measure $P$ is equal to
\begin{equation}\label{eq:wzorp1}
\nu_0
\cdot(R_{\gamma\kappa^r})_*A_*(B_{-\gamma\kappa^r})_*(P_0)+\nu_1\cdot
(R_{-(1-\gamma)\kappa^r})_*A_*(B_{(1-\gamma)\kappa^r})_*(P_1)+
\nu_2\cdot\theta_*(P_2).
\end{equation}
Since $\nu_0+\nu_1>1/2$ and \eqref{eq:wzorp} and \eqref{eq:wzorp1}
hold, we have
\begin{gather}\label{eq:wiekpol}
\begin{split}
P\big(R_{-\gamma\kappa^r}\circ A(\R^r)\cup
R_{(1-\gamma)\kappa^r}\circ A(\R^r)\big)>1/2,\\
P\big(R_{\gamma\kappa^r}\circ A(\R^r)\cup
R_{-(1-\gamma)\kappa^r}\circ A(\R^r)\big)>1/2.
\end{split}
\end{gather}
As $\gamma\neq 0,1/2$, the sets
$\{-\gamma\kappa^r,(1-\gamma)\kappa^r\}$ and
$\{\gamma\kappa^r,-(1-\gamma)\kappa^r\}$ are disjoint. Hence the
sets $R_{-\gamma\kappa^r}\circ A(\R^r)\cup
R_{(1-\gamma)\kappa^r}\circ A(\R^r)$ and $R_{\gamma\kappa^r}\circ
A(\R^r)\cup R_{-(1-\gamma)\kappa^r}\circ A(\R^r)$ are disjoint,
contrary to \eqref{eq:wiekpol}. This completes the proof of
non-isomorphism of $T^f$ and its inverse.
\end{proof}

\section{Analytic flows on $\T^2$}\label{sec:analytic}
\subsection{Non-reversibility}\label{nonrevana}
Let us recall that that analytic special flows
 over irrational
rotations are precisely analytic reparametrizations of
two-dimensional rotations and that (in case of ergodicity) they
have simple spectra \cite{Co-Fo-Si}, so if they are isomorphic to
their inverses, they are automatically reversible.

The aim of this section is to provide analytic examples that are
not reversible. For this aim we briefly recall the
AACCP\footnote{The acronym comes from ``almost analytic cocycle
construction procedure''.} constructions \cite{Kw-Le-Ru} (see also
\cite{Le-Wy} for some modifications).

An AACCP is given by a collection of the following parameters:  a
sequence $(M_k)_{k\geq1}\subset\N$ together with an infinite real
matrix $((d_{k1},\ldots,d_{kM_k}))_{k\geq1}$ satisfying for each
$k\geq1$
$$
\sum_{i=1}^{M_k}d_{ki}=0.$$ Set
$D_k=\max\{|d_{ki}|:\:i=1,\ldots,M_k\}$. Then we select a sequence
$(\vep_k)_{k\geq1}$ of positive real numbers so that
$$
\sum_{k=1}^\infty\sqrt{\vep_k}M_k<+\infty,
\;\sum_{k=1}^\infty\vep_k<1,\;
\vep_k<\frac1{D_k^2},\;k=1,2,\ldots.$$ Finally, the completing
parameter of the AACCP is a real number $A>1$.

The above AACCP is said to be \emph{ realized over an irrational
number} $\alpha\in[0,1)$ having the continued fraction expansion
\[
\alpha=[0;a_1,a_2,\ldots]
\]
if there exists a strictly increasing sequence
$(n_k)_{k\geq1}\subset\N$ such that for each $k\geq1$ we have
\[
a_{2n_k+1}>2,\qquad
A^{N_k}\frac{D_kM_k}{a_{2n_k+1}q_{2n_k}}<\frac1{2^k},
\]
where $N_k$ is the degree of a real non-negative trigonometric
polynomial $P_k$ satisfying
\[
\int_0^1P_k(t)\,dt=1\quad\text{ and }\quad
P_k(t)<\vep_k\text{ for }t\in(\eta_k/2,1),
\]
where we require the numbers $\eta_k>0$ to satisfy
\[
4M_k\eta_k<\frac{\vep_k}{q_{2n_k}}\quad\text{ and
}\quad\frac1{a_{2n_k+1}q_{2n_k}}<\frac12\eta_k.
\]
Recall now that
\[
\zeta_{2n_k}=\Big\{\big[0,\{q_{2n_k}\alpha\}\big),T\big[0,\{q_{2n_k}\alpha\}\big),
\ldots, T^{q_{2n_k+1}-1}\big[0,\{q_{2n_k}\alpha\}\big)\Big\}
\]
and
\[
\overline{\zeta}_{2n_k}=
\Big\{\big[\{q_{2n_k+1}\alpha\},1\big),T\big[\{q_{2n_k+1}\alpha\},1\big),\ldots,
T^{q_{2n_k}-1}\big[\{q_{2n_k+1}\alpha\},1\big)\Big\}
\]
are two disjoint
Rokhlin towers fulfilling the whole interval $[0,1)$. It follows
that if we set
\[
I_k:=[0,\{a_{2n_k+1}q_{2n_k}\alpha\}),\quad
J_t^k:=T^{(t-1)q_{2n_k}} [0,\{q_{2n_k}\alpha\}),\ \text{ for }\
t=1,\ldots,a_{2n_k+1},
\]
then
$$I_k=\bigcup_{t=1}^{a_{2n_k+1}}J_t^k$$
and $I_k$ is the base of a Rokhlin tower of height $q_{2n_k}$
occupying at least $1-\frac2{a_{2n_k+1}}$ of the space.

Using the above parameters for $\alpha$ over which the AACCP can
be realized, one defines a real valued cocycle
$$
\varphi=\sum_{k=1}\varphi_k$$ as follows. In $I_k$ we choose
consecutively intervals $w_{k,1},\ldots,w_{k,M_k}$ of the same
length $\lambda_k\in(\eta_k,2\eta_k)$, each of which consists of
(the same) odd number $e_k\geq3$ of (consecutive) intervals
$J^k_t$. In general, the intervals $w_{k,i}$ and $w_{k,i+1}$ can
be separated by a certain number of intervals of the form $J^k_s$.
Denote by $J^{k}_{s_{k,i}}$ the middle interval $J^k_t$ in
$w_{k,i}$. Then we set
$$
\varphi_{k}(x)=\left\{\begin{array}{ll}d_{ki} &\text{if $x\in
J^k_{s_{k,i}}$}\\
0&\text{otherwise.}\end{array}\right.$$ Note that $I_{k+1}\subset
J^k_1$, so the supports of $\varphi_k$, $k\geq1$ are pairwise
disjoint.

The following two results have been proved in \cite{Kw-Le-Ru}.
\begin{pr}\label{klr1} The set of $\alpha\in[0,1)$ over which an
AACCP can be realized is residual.\end{pr}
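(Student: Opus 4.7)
The plan is to show that the set $\mathcal{A}$ of those $\alpha \in [0,1)$ over which the given AACCP can be realized is a dense $G_\delta$ subset of $[0,1)$, and to conclude residuality by Baire's theorem. The AACCP data $(M_k), (d_{ki}), (\vep_k), A$ are fixed throughout, so $D_k, M_k, \vep_k, A$ are all fixed numerical quantities.

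First I would reformulate AACCP-realizability as a countable conjunction. Since the $k$-th condition depends only on the single index $n_k$ (via $a_{2n_k+1}$ and $q_{2n_k}$), the existence of a strictly increasing sequence $(n_k)_{k\ge1}$ satisfying all conditions is equivalent to the statement that for each $k$ and each threshold $M\in\N$ there is some $n>M$ for which the $k$-th set of inequalities can be fulfilled. Calling that set $\mathcal{U}_{k,M}$, we obtain $\mathcal{A}=\bigcap_{k,M}\mathcal{U}_{k,M}$.

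Openness of $\mathcal{U}_{k,M}$ is immediate: both $a_{2n+1}(\alpha)$ and $q_{2n}(\alpha)$ are locally constant on continued-fraction cylinders, so if $n$ witnesses $\alpha\in\mathcal{U}_{k,M}$ then every $\alpha'$ in a sufficiently deep cylinder around $\alpha$ has the same witness.

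The main content is density. Given an arbitrary non-empty open interval $V\subset[0,1)$, I would first select initial partial quotients $a_1,\ldots,a_{2n}$ with $n>M$ such that the corresponding cylinder lies inside $V$; this is possible because the cylinders form a basis of the topology on the irrationals. Once these are fixed, $q_{2n}$ is determined, and I would set $\eta_k:=\vep_k/(8M_kq_{2n})$, compatible with the upper bound $4M_k\eta_k<\vep_k/q_{2n}$. A standard Fejér-type construction then produces a non-negative trigonometric polynomial $P_k$ of some explicit degree $N_k \lesssim 1/(\vep_k\eta_k^2)$ with $\int_0^1 P_k=1$ and $P_k(t)<\vep_k$ on $(\eta_k/2,1)$; now $N_k$ is a fixed finite number. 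It remains to pick $a_{2n+1}$ so large that simultaneously $a_{2n+1}>2$, $1/(a_{2n+1}q_{2n})<\eta_k/2$, and $a_{2n+1}q_{2n}>2^kA^{N_k}D_kM_k$. Since this is merely a lower bound on a parameter ranging over all positive integers, such a choice is always available, and every $\alpha$ beginning $[0;a_1,\ldots,a_{2n+1},\ldots]$ with this extension lies in $V\cap\mathcal{U}_{k,M}$.

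The main technical obstacle is apparent rather than real: $A^{N_k}$ is super-exponential in $q_{2n}$ (since $N_k$ scales roughly like $q_{2n}^2$), but this is overwhelmed by the freedom to choose $a_{2n+1}$ arbitrarily large for any fixed initial prefix. Once openness and density of each $\mathcal{U}_{k,M}$ have been verified, Baire's theorem yields that $\mathcal{A}$ is a dense $G_\delta$, hence residual, which completes the plan.
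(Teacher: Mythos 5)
The paper does not prove Proposition~\ref{klr1}; it is quoted from \cite{Kw-Le-Ru}, so there is no in-text argument to compare against. Your Baire-category proof is the standard one and is essentially correct: the decisive observation --- that $\eta_k$, and hence $P_k$ and $N_k$, depend only on $q_{2n}$ (fixed once the prefix $a_1,\dots,a_{2n}$ is chosen) and not on $a_{2n+1}$, so that the two lower bounds $a_{2n+1}>2/(\eta_k q_{2n})$ and $a_{2n+1}>2^kA^{N_k}D_kM_k/q_{2n}$ can be met by taking $a_{2n+1}$ arbitrarily large inside any prescribed cylinder --- is exactly what makes the density step work, and you identify it correctly. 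Two small points deserve tightening. First, your identity $\mathcal{A}=\bigcap_{k,M}\mathcal{U}_{k,M}$ is really only the inclusion $\bigcap_{k,M}\mathcal{U}_{k,M}\subseteq\mathcal{A}$ (realizability needs each condition to hold at \emph{some} $n_k$, chosen increasingly, not at infinitely many $n$); but this is harmless, since residuality of the smaller set gives residuality of $\mathcal{A}$. Second, the sets $\mathcal{U}_{k,M}$ are open only relative to the irrationals (continued-fraction cylinders are intervals intersected with the irrationals), so one should either work in the Polish space of irrationals in $[0,1)$ or note that $\mathcal{U}_{k,M}=V_{k,M}\cap(\R\setminus\Q)$ with $V_{k,M}$ open and dense, whence the intersection is still residual in $[0,1)$. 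With these cosmetic fixes the argument is complete.
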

\begin{pr}\label{klr2} The cocycle $\varphi$ defined above is
cohomologous to an analytic cocycle.
\end{pr}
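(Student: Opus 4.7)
The plan is to realize $\varphi$ as $\tilde\varphi + h \circ T - h$ level by level: for each $k$, I would construct an analytic function $\tilde\varphi_k$ and a measurable $h_k$ with $\varphi_k = \tilde\varphi_k + h_k \circ T - h_k$, and then set $\tilde\varphi := \sum_k \tilde\varphi_k$ (convergent analytically on a complex strip) and $h := \sum_k h_k$ (convergent in measure or a.e.). Summing the pointwise cohomology identities then yields $\varphi = \tilde\varphi + h\circ T - h$ with $\tilde\varphi$ analytic. The hypotheses on the parameters $(M_k, D_k, \vep_k, N_k, A)$ are precisely designed to decouple the analytic convergence from the measurable convergence.

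For the analytic piece, I would take $\tilde\varphi_k := P_k \ast \varphi_k$. Since $P_k$ is a non-negative trigonometric polynomial of degree $\le N_k$ with unit integral, $\tilde\varphi_k$ is itself a trigonometric polynomial of degree at most $N_k$, and hence extends to an entire function on $\C$. A Bernstein-type estimate bounds the sup norm of $\tilde\varphi_k$ on the strip $\{|\Im z|\le (\log A)/(2\pi)\}$ by $A^{N_k}$ times its real sup norm, while $\|\tilde\varphi_k\|_\infty$ is itself controlled by $D_k M_k/(a_{2n_k+1}q_{2n_k})$ (since $\varphi_k$ is a sum of $M_k$ indicator functions supported on columns of measure $\{q_{2n_k}\alpha\}\asymp 1/(a_{2n_k+1}q_{2n_k})$, with amplitudes at most $D_k$, and $P_k$ is a probability kernel concentrated in a set of diameter $\eta_k$). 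The hypothesis $A^{N_k}D_k M_k/(a_{2n_k+1}q_{2n_k})<2^{-k}$ therefore makes $\sum_k \tilde\varphi_k$ absolutely summable in the analytic norm on the strip.

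For the coboundary, I would exploit the Rokhlin tower with base $I_k$ and height $q_{2n_k}$. Writing each point in the tower as $T^l y$ with $y\in I_k$ and $0\le l<q_{2n_k}$, set $h_k(T^l y):=\sum_{j=0}^{l-1}(\varphi_k-\tilde\varphi_k)(T^j y)$ and $h_k=0$ outside the tower. Telescoping yields $h_k\circ T - h_k = \varphi_k - \tilde\varphi_k$ on all interior levels of the tower; the two error sources are the top level and the complement of the tower (of measure $\le 2/a_{2n_k+1}$). A small local adjustment of $\tilde\varphi_k$ ensuring that the Birkhoff sum $(\varphi_k-\tilde\varphi_k)^{(q_{2n_k})}$ vanishes on $I_k$ removes the top-of-tower defect without affecting the analytic bounds.

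The main obstacle is then showing that $\sum_k h_k$ converges almost everywhere: although the sup norm of an individual $h_k$ can be as large as $q_{2n_k}D_k$, its support lies inside the tower and the effective ``non-negligible'' set where $h_k$ differs appreciably from a genuine coboundary has measure controlled by $\sqrt{\vep_k}\,M_k$, using $\vep_k<1/D_k^2$ (so $D_k\sqrt{\vep_k}<1$) and the concentration property of $P_k$. The summability hypothesis $\sum_k\sqrt{\vep_k}\,M_k<\infty$ together with a Borel--Cantelli argument then shows that almost every point is affected by only finitely many $h_k$'s and lies in the interior of every tower past some $k_0$, so the series $\sum_k h_k$ converges pointwise to a measurable $h$. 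Combining with the analytic convergence of $\tilde\varphi$ yields the asserted cohomology $\varphi - \tilde\varphi = h\circ T - h$.
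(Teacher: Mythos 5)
The paper does not actually prove this proposition: it is quoted verbatim from \cite{Kw-Le-Ru} (``the following two results have been proved in \cite{Kw-Le-Ru}''), so there is no in-paper argument to compare against line by line. Your outline does follow the general AACCP strategy of that reference --- take $\tilde\varphi_k=P_k\ast\varphi_k$, control its extension to a complex strip through the Fourier coefficients and the hypothesis $A^{N_k}D_kM_k/(a_{2n_k+1}q_{2n_k})<2^{-k}$, and build the transfer function from partial Birkhoff sums over the Rokhlin tower with base $I_k$ --- and you correctly identify which parameter controls which convergence. (One slip in the analytic part: $D_kM_k/(a_{2n_k+1}q_{2n_k})$ bounds $\|\varphi_k\|_{L^1}$, hence the Fourier coefficients of $\tilde\varphi_k$, not $\|\tilde\varphi_k\|_\infty$, which is only $O(D_k)$; the strip estimate still goes through via the coefficients, at the cost of a factor $2N_k+1$ that you absorb by shrinking the strip.)

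There are, however, two genuine gaps in the coboundary half. First, the ``small local adjustment of $\tilde\varphi_k$'' that is supposed to force $(\varphi_k-\tilde\varphi_k)^{(q_{2n_k})}=0$ on $I_k$ cannot be local: $\tilde\varphi_k$ must remain a trigonometric polynomial (or at least analytic), and an analytic function cannot be modified on a set of small measure. Removing the top-of-tower and off-tower defect while preserving analyticity is precisely the technical heart of the Kwiatkowski--Lema\'nczyk--Rudolph construction (it is why $P_k$ is required to be concentrated on $(0,\eta_k/2)$ up to an $\vep_k$-error and why $\eta_k$ is tied to the combinatorics of the intervals $J^k_t$), and your sketch waves it away. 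Second, the Borel--Cantelli claim that ``almost every point is affected by only finitely many $h_k$'s'' is false as stated: each $h_k$ is defined on the whole tower over $I_k$, whose measure is at least $1-2/a_{2n_k+1}\to 1$, so almost every point lies in the support of all but finitely many $h_k$, and generically $h_k\neq 0$ there. What is actually needed is a quantitative estimate showing that, off exceptional sets of summable measure (this is where $\sum_k\sqrt{\vep_k}M_k<\infty$ and $\vep_k<1/D_k^2$ enter), the $h_k$ are uniformly small enough for $\sum_k h_k$ to converge a.e.; you name the right hypotheses but do not carry out the estimate. As it stands the argument establishes the shape of the proof but not the proof itself.
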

Moreover, we will also make use of the following observation from
\cite{Kw-Le-Ru}.
\begin{lm} For an arbitrary AACCP and $\alpha$ over which it is
realized, the cocycle $\varphi$ is constant on each interval
$T^iI_k$, $i=1,\ldots,q_{2n_k}-1$, $k\geq1$. Moreover, for each
$k\geq1$
\begin{equation}\label{klr4}
\sum_{i=1}^{q_{2n_k}-1}\varphi|_{T^iI_k}=0.\end{equation}
\end{lm}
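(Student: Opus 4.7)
I would use the decomposition $\varphi=\sum_{k'\geq 1}\varphi_{k'}$ together with the nested inclusions $I_{j+1}\subset J^j_1\subset I_j$ from the AACCP construction, treating contributions from $k'\geq k$ and from $k'<k$ separately. For $k'\geq k$, iterating the nesting gives $\operatorname{supp}\varphi_{k'}\subset I_{k'}\subset I_k$, and disjointness of the Rokhlin levels $I_k,TI_k,\ldots,T^{q_{2n_k}-1}I_k$ forces $\varphi_{k'}|_{T^iI_k}=0$ for every $i\in\{1,\ldots,q_{2n_k}-1\}$. These terms therefore contribute nothing to either the constancy claim or the summation identity.

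For $k'<k$, the nested inclusion places $I_k$ inside the base $J^{k'}_1$ of the Ostrowski tower $\zeta_{2n_{k'}}$; the step function $\varphi_{k'}$ equals $d_{k',t}$ on the level $J^{k'}_{s_{k',t}}$ and vanishes elsewhere, hence is constant on each atom of the partition $\zeta_{2n_{k'}}\cup\overline{\zeta}_{2n_{k'}}$. The required constancy of $\varphi_{k'}$ on $T^iI_k$ reduces to the geometric fact that $T^iI_k$ lies entirely inside a single atom of this partition, which I would verify by a three-distance-theorem type argument: atom endpoints have the form $\{m\alpha\}$ for $m$ in a bounded range, and disjointness of the levels $\{T^iI_k\}_{i=0}^{q_{2n_k}-1}$ prevents any such endpoint from entering the interior of a given $T^iI_k$. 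For the summation identity, observe first that $\varphi_{k'}|_{I_k}=0$ for every $k'<k$, because $s_{k',t}\geq 2$ (the middle of a block of odd length $e_{k'}\geq 3$) implies that $J^{k'}_{s_{k',t}}$ is disjoint from $J^{k'}_1\supset I_k$. Consequently
\[
\sum_{i=1}^{q_{2n_k}-1}\varphi_{k'}\big|_{T^iI_k}=\sum_{i=0}^{q_{2n_k}-1}\varphi_{k'}\big|_{T^iI_k}=\sum_{t=1}^{M_{k'}}d_{k',t}\,N_t,
\]
where $N_t:=\#\{0\leq i\leq q_{2n_k}-1:T^iI_k\subset J^{k'}_{s_{k',t}}\}$; showing that $N_t$ is independent of $t$ collapses the right-hand side to $N\sum_{t}d_{k',t}=0$ by the defining AACCP condition $\sum_t d_{k',t}=0$.

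The main obstacle I anticipate is establishing the $t$-independence of the counts $N_t$. This should follow from the observation that the levels $J^{k'}_{s_{k',t}}$ are translates of $J^{k'}_1$ by integer multiples of $q_{2n_{k'}}\alpha$, combined with the Ostrowski identity $q_{2n_k+1}=a_{2n_k+1}q_{2n_k}+q_{2n_k-1}$ describing how the Rokhlin tower of $I_k$ sits inside $\zeta_{2n_k}$: a careful accounting should show that the set $\{i:T^iI_k\subset J^{k'}_{s_{k',t}}\}$ is obtained from the $t=1$ case by an index shift that preserves cardinality within $\{0,\ldots,q_{2n_k}-1\}$, which together with the near-full measure $q_{2n_k}|I_k|\approx 1$ of the tower of $I_k$ yields the claim.
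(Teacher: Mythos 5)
A preliminary remark: the paper does not prove this lemma at all --- it is imported verbatim from \cite{Kw-Le-Ru} --- so your argument can only be measured against the construction itself, not against a proof in the text. Measured that way, your architecture is correct. The terms with $k'\ge k$ do vanish on $T^iI_k$ for $1\le i\le q_{2n_k}-1$, since $\operatorname{supp}\varphi_{k'}\subset I_{k'}\subset J^{k'-1}_1\subset I_{k'-1}\subset\cdots\subset I_k$ and the sets $T^iI_k$, $0\le i<q_{2n_k}$, are pairwise disjoint. For $k'<k$, the function $\varphi_{k'}$ is indeed constant on each level of $\zeta_{2n_{k'}}\cup\overline{\zeta}_{2n_{k'}}$; the endpoints of these levels are the points $\{m\alpha\}$ with $0\le m<q_{2n_{k'}}+q_{2n_{k'}+1}\le q_{2n_k}$, and no such point can lie in the interior of $T^iI_k$ for $1\le i<q_{2n_k}$ because $\{m\alpha\}\in\operatorname{int}(T^iI_k)$ would force $0\in\operatorname{int}(T^{i-m}I_k)$ with $|i-m|\le q_{2n_k}-1$, contradicting either the disjointness of the tower over $I_k$ (when $i\ne m$) or the fact that $0$ is the left endpoint of $I_k$ (when $i=m$). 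Your reduction of \eqref{klr4} to the $t$-independence of the counts $N_t$, via $\varphi_{k'}|_{I_k}=0$ and $\sum_t d_{k',t}=0$, is also fine.

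The one genuine gap is the step you yourself flag, and the heuristic you offer for it --- the ``near-full measure $q_{2n_k}|I_k|\approx 1$'' of the tower over $I_k$ --- is not the operative fact and would not close it. What makes the index shift cardinality-preserving is that \emph{both} endpoints of the orbit segment lie in the base $B:=J^{k'}_1=[0,\|q_{2n_{k'}}\alpha\|)$, while the return time of $B$ exceeds every shift involved. Concretely, fix $x\in I_k$, set $N=q_{2n_k}$ and $m_t=(s_{k',t}-1)q_{2n_{k'}}<q_{2n_{k'}+1}$, so that $J^{k'}_{s_{k',t}}=T^{m_t}B$ and
\[
N_t-\#\{0\le i<N:\ T^ix\in B\}=\#\{1\le i\le m_t:\ T^{-i}x\in B\}-\#\{1\le i\le m_t:\ T^{N-i}x\in B\}.
\]
Both counts on the right vanish: on one hand $x\in I_k\subset B$, and on the other hand $T^Nx=x+\|q_{2n_k}\alpha\|\in[0,(a_{2n_k+1}+1)\|q_{2n_k}\alpha\|)\subset B$, because $(a_{2n_k+1}+1)\|q_{2n_k}\alpha\|<\|q_{2n_k-1}\alpha\|+\|q_{2n_k}\alpha\|\le\|q_{2n_{k'}}\alpha\|$ (here $2n_{k'}\le 2n_k-2$ is used); meanwhile the first return time of $B$ to itself, in either time direction, is at least $q_{2n_{k'}+1}>m_t$ because $\zeta_{2n_{k'}}$ is a Rokhlin tower of height $q_{2n_{k'}+1}$ over $B$. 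Hence every $N_t$ equals the number of visits of $\{x,Tx,\ldots,T^{N-1}x\}$ to $B$, which is independent of $t$, and \eqref{klr4} follows from $\sum_{t}d_{k',t}=0$. With this substitution your proof is complete.
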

We now proceed to our special construction. We assume that
\[
M_k=4M'_k\to\infty.
\]
Moreover, we assume that
\begin{equation}\label{mag1}
a_{2n_k+1}=e_kM_k\end{equation} with $e_k\geq3$ odd, $k\geq1$ and
\begin{equation}\label{mag2}
\frac1{\sqrt{M_k}}\sum_{i=1}^{k-1}M_i\leq C_1\end{equation} for a
constant $C_1>0$. The intervals $w_{k,i}$ are then defined as
consecutive unions of $e_k$ (consecutive) subintervals of the form
$J^k_t$. Fix $t_0, u_0\in\R$. For each $k\geq1$ we then set
\begin{equation}\label{eq:dki}
(d_{ki})=\Big(\underbrace{(t_0,u_0,-u_0,-t_0),\ldots,
(t_0,u_0,-u_0,-t_0)}_{M'_k\; \text{times}}\Big).
\end{equation}
Proceeding as in \cite{Le-Wy} and using~(\ref{klr4}), by
construction, we have
\begin{equation}\label{klr5} \varphi^{(e_kq_{2n_k})}-
\varphi_k^{(e_kq_{2n_k})}\to0\quad\text{in measure},\end{equation}
\begin{equation}\label{klr6}
\Big(\varphi_k^{(2e_kq_{2n_k})},\varphi_k^{(e_kq_{2n_k})}\Big)_\ast
\to\frac14\left(\delta_{(t_0+u_0,t_0)}+
\delta_{(0,u_0)}+\delta_{(-u_0-t_0,-u_0)}+\delta_{(0,-t_0)}\right),
\end{equation}
so
\begin{equation}\label{klr7}
\Big(\varphi^{(2e_kq_{2n_k})}, \varphi^{(e_kq_{2n_k})}\Big)_\ast
\to\frac14\left(\delta_{(t_0+u_0,t_0)}+
\delta_{(0,u_0)}+\delta_{(-u_0-t_0,-u_0)}+\delta_{(0,-t_0)}\right).
\end{equation}
Moreover,
\begin{equation}\label{klr8}
\{e_kq_{2n_k}\alpha\}\to0\;\;\text{when}\;\;k\to\infty.
\end{equation}
We now proceed similarly as in \cite{Le-Wy} and notice that if we
set  $C:=|t_0|+|u_0|$ then for each $k\geq1$ we have
\begin{equation}\label{klr9}
\left|\varphi^{(e_kq_{2n_k})}_k\right|\leq C.
\end{equation}
Since the support of $\sum_{i\geq k+1}\varphi_i$ is included in
$I_{k+1}$ and for each $x\in[0,1)$,
$$\#\left(\{x,Tx,\ldots,T^{e_kq_{2n_k}}x\}\cap I_{k+1}\right)
\leq1,$$ we also have
\begin{equation}\label{klr10}
\Big|\sum_{i\geq k+1}\varphi_i^{(e_kq_{2n_k})}\Big|\leq C.
\end{equation}
Finally, for $i=1,\ldots,k-1$ we have $\varphi_i=0$ on $I_k$, so
in view of~(\ref{klr4}), $\varphi_i^{(e_kq_{2n_k})}=0$ except for
the set
\[
[0,1)\setminus\bigcup_{j=0}^{q_{2n_k}-1}T^jI_k\quad\text{ and }
\quad \bigcup_{j=0}^{e_kq_{2n_k}-1}T^jJ^k_{a_{2n_k+1}-e_k}.
\]
Moreover, by \eqref{klr4}, for every $x\in\T$ and $m\geq 0$, $i\geq1$ we
have
\[|\varphi_i^{(m)}(x)|\leq \sum_{j=1}^{M_i}|d_{ij}|\leq CM_i.\]
In view of~(\ref{mag1}) and~(\ref{mag2}), it follows that
\begin{align*}
\Big\|\Big(\sum_{i=1}^{k-1}&\varphi_i\Big)^{(e_kq_{2n_k})}
\Big\|_2\leq C\sum_{i=1}^{k-1}M_i\cdot\mu\Big(\Big\{
x\in[0,1):\:\Big(\sum_{i=1}^{k-1}\varphi_i\Big)^{(e_kq_{2n_k})}
(x)\neq0\Big\}\Big)^{1/2}\\&\leq
C\sum_{i=1}^{k-1}M_i\left(\frac2{a_{2n_k+1}}+
\frac{e_k}{a_{2n_k+1}}\right)^{1/2}\leq2C\frac1{\sqrt{M_k}}
\sum_{i=1}^{k-1}M_i\leq const.
\end{align*} This together with~(\ref{klr9})
and~(\ref{klr10}) implies
\begin{equation}\label{klr11}
\left\|\varphi^{(e_kq_{2n_k})} \right\|_2\leq const.
\end{equation}
The cocycle $\varphi$ is clearly bounded (and of zero mean) and by
Proposition~\ref{klr2}, it is cohomologous to an analytic function
$f_0:\T\to\R$ (of zero mean). Then for each sufficiently large
constant $d>0$ we have
$$
\varphi+d>0,\;\;f:=f_0+d>0$$ and moreover the special flows
$T^{\varphi+d}$ and $T^f$ are isomorphic. In view
of~(\ref{klr8}),~(\ref{klr11}), Proposition~\ref{main1}
and~(\ref{klr7}) we obtain that for some constant $c>0$ ($c=\int_X f\,d\mu$)
\begin{equation}\label{klr12}
\mu^{\varphi+d}_{T^{\varphi+d}_{2ce_kq_{2n_k}},T^{\varphi+d}_{ce_kq_{2n_k}}}\!\to\!\frac14
\Big(\mu^{\varphi+d}_{T^{\varphi+d}_{t_0+u_0},T^{\varphi+d}_{t_0}}\!+\mu^{\varphi+d}_{Id,T^{\varphi+d}_{u_0}}\!+\!\mu^{\varphi+d}_{
T^{\varphi+d}_{-u_0-t_0},T^{\varphi+d}_{-u_0}}\!+\!\mu^{\varphi+d}_{Id,T^{\varphi+d}_{-t_0}}\Big).
\end{equation}
Since $T^{\varphi+d}$ and $T^f$ are isomorphic and~(\ref{klr12})
holds,
\begin{equation}\label{klr13}
\mu^{f}_{T^{f}_{2ce_kq_{2n_k}},T^{f} _{ce_kq_{2n_k}}}
\to\frac14\left( \mu^{f}_{T^{f}_{t_0+u_0},T^{f}_{t_0}} +
\mu^{f}_{Id,T^{f}_{u_0}}+ \mu^{f}_{T^{f}_{-u_0-t_0},T^{f}_{-u_0}}
+\mu^{f}_{Id,T^{f}_{-t_0}} \right).
\end{equation}
Now, the limit measure
\begin{align*}
P:&=\frac14\left(\delta_{(t_0+u_0,t_0)}+
\delta_{(0,u_0)}+\delta_{(-u_0-t_0,-u_0)}+\delta_{(0,-t_0)}\right)\\
&=\lim_{k\to\infty}
\left(f_0^{(2e_kq_{2n_k})},f_0^{(e_kq_{2n_k})}\right)_\ast(\mu)
\end{align*}
is not ``symmetric'' in the sense that $(0,u_0)$ is its atom,
while $(0,-u_0)$ is not provided that $u_0\neq0$ and $t_0\neq\pm
u_0$ and therefore under these additional assumptions, by
Corollary~\ref{kryt}, $T^f$ is not reversible. In this way we have
proved the following result.

\begin{wn}
There is an analytic weakly mixing flow on  $\T^2$  (preserving a
smooth measure) that is not reversible.
\end{wn}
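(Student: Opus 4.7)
The plan is to specialize the AACCP construction already built in this section with a concrete choice of parameters $t_0,u_0$, so that~(\ref{klr13}) furnishes the required asymmetric limit joining and Corollary~\ref{kryt} immediately yields non-reversibility; weak mixing will then be derived separately, as a consequence of the non-trivial limit distribution of Birkhoff sums over rigidity times. I would fix $t_0:=1$ and $u_0:=\sqrt 2$, so that $u_0\neq 0$, $t_0\neq\pm u_0$ and $u_0/t_0$ is irrational (this last property will be used to exclude non-zero eigenvalues). Running the AACCP construction with $(d_{ki})$ of the block form~(\ref{eq:dki}), choosing $(M_k),(\vep_k),A$ as prescribed and using Proposition~\ref{klr1} to fix an irrational $\alpha$ admitting a realization, Proposition~\ref{klr2} converts the piecewise-constant cocycle $\varphi$ into a cohomologous analytic function $f_0$; choosing $d$ large so that $f:=f_0+d>0$, the special flow $T^f$ is an analytic reparametrization of a linear flow on $\T^2$ preserving a smooth invariant measure.

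With this choice, the limit measure on the right hand side of~(\ref{klr13}) is the integral against
\[
P:=\tfrac14\bigl(\delta_{(t_0+u_0,t_0)}+\delta_{(0,u_0)}+\delta_{(-u_0-t_0,-u_0)}+\delta_{(0,-t_0)}\bigr),
\]
and this is precisely the joint limit distribution of $(f_0^{(2e_kq_{2n_k})},f_0^{(e_kq_{2n_k})})$. Since $(0,u_0)$ is an atom of $P$ but $(0,-u_0)$ is not (because $u_0\neq 0,\pm t_0$), Corollary~\ref{kryt} applies and gives that $T^f$ is not isomorphic to its inverse; combined with the simple spectrum of analytic special flows over irrational rotations recalled at the opening of the section, this amounts to non-reversibility.

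The only delicate step, and the one I view as the main obstacle, is verifying weak mixing, which is not handled by the joining machinery of the section. My plan is to rule out non-trivial eigenvalues via rigidity. Suppose $\lambda\in\R\setminus\{0\}$ were an eigenvalue of $T^f$; then $e^{2\pi i\lambda f(x)}$ would be a multiplicative $T$-coboundary, and hence $e^{2\pi i\lambda f^{(q)}(x)}\to 1$ in measure along every rigidity time $q$ for $T$. By~(\ref{klr8}) the sequence $(e_kq_{2n_k})$ is such a rigidity time, and by~(\ref{klr7}) (second marginal) the distribution of $f_0^{(e_kq_{2n_k})}$ converges weakly to $\tfrac14(\delta_{t_0}+\delta_{u_0}+\delta_{-u_0}+\delta_{-t_0})$. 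Writing $f=f_0+d$ and passing to a subsequence along which the $k$-dependent scalar $e^{2\pi i\lambda de_kq_{2n_k}}$ converges to some $c\in S^1$, the coboundary condition forces $e^{2\pi i\lambda f_0^{(e_kq_{2n_k})}}\to c^{-1}$ in measure, and comparing with the limit distribution the four numbers $e^{\pm 2\pi i\lambda t_0},e^{\pm 2\pi i\lambda u_0}$ must all coincide with $c^{-1}$. In particular $2\lambda t_0,2\lambda u_0\in\Z$, and with $t_0=1$, $u_0=\sqrt 2$ this gives $\lambda\in\tfrac12\Z$ together with $\lambda\sqrt 2\in\tfrac12\Z$, forcing $\lambda=0$, a contradiction. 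Hence $T^f$ has no non-zero eigenvalue and, being ergodic, is weakly mixing, which completes the proof.
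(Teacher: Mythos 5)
Your proof is correct, and for the non-reversibility half it follows the paper's own route exactly: run the AACCP construction with the block pattern \eqref{eq:dki}, pass to the analytic cohomologous roof via Proposition~\ref{klr2}, obtain the limit joining \eqref{klr13} from Proposition~\ref{main1}, and invoke Corollary~\ref{kryt} on the asymmetric atom $(0,u_0)$. Where you genuinely diverge is on weak mixing: the paper's text asserts it in the statement of the corollary but gives no argument for it in this section (the conditions it imposes, $u_0\neq 0$ and $t_0\neq\pm u_0$, are what is needed for Corollary~\ref{kryt}, not for weak mixing --- e.g.\ $t_0=1$, $u_0=2$ satisfies them but the eigenvalue test you use would not rule out integer eigenvalues). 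Your fix is to impose the extra condition $u_0/t_0\notin\Q$ (via $t_0=1$, $u_0=\sqrt2$) and then run the standard argument: an eigenvalue $\lambda$ makes $e^{2\pi i\lambda f}$ a multiplicative coboundary, hence $e^{2\pi i\lambda f^{(e_kq_{2n_k})}}\to 1$ in measure along the rigidity times \eqref{klr8}, which is incompatible with the four-point limit distribution in \eqref{klr7} unless $2\lambda t_0,2\lambda u_0\in\Z$, forcing $\lambda=0$. This is a correct and self-contained way to supply the weak-mixing claim; it buys an explicit verification where the paper implicitly relies on the constructions of \cite{Kw-Le-Ru}, at the modest cost of restricting the admissible parameters $(t_0,u_0)$. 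One cosmetic remark: non-isomorphism with the inverse already implies non-reversibility directly, so your appeal to simple spectrum at that point is superfluous (simple spectrum is only needed for the converse implication recalled at the start of the section).
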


\begin{uw}
If instead of $f$ (constructed above) we  consider
$f_{\vep}:=1+\vep f$ for small enough $\vep>0$ then  the
corresponding special flow $T^{f_{\vep}}$ can be interpreted as
arbitrarily small analytic change of time in the linear flow by
$(\alpha,1)$ on $\T^2$.

Now, note that $\left( f_{\vep}\right)_0=\vep f_0$. Hence
$$
\left(\left( f_{\vep}\right)_0^{(2e_kq_{2n_k})},\left(
f_{\vep}\right)_0^{(e_kq_{2n_k})}\right)_\ast(\mu)\to\left(M_{\vep}\right)_\ast
P=:P_{\vep},$$ where $M_{\vep}(x,y)=(\vep x,\vep y)$. It  follows
that $P_{\vep}$ has the same asymmetries as $P$ and therefore
$T^{f_{\vep}}$ is not reversible. Therefore, for some $\alpha$
irrational there are arbitrarily small analytic changes of time in
the linear flow by $(\alpha,1)$ on $\T^2$ which yield weakly
mixing and non-reversible flows.
\end{uw}

\begin{uw}\label{cztery} If in~(\ref{eq:dki}),
for each $k\geq1$, we consider the following pattern
\begin{align}\label{eq:dkicztery}
\begin{aligned}
&(d_{ki})=\\
&\Big(\underbrace{(a,b,c,b,b),\ldots, (a,b,c,b,b)}_{M'_k/2\;
\text{times}},\underbrace{(-a,-b,-c,-b,-b),\ldots,
(-a,-b,-c,-b,-b}_{M'_k/2\; \text{times}}\Big)
\end{aligned}
\end{align} then
\[
\left(f_0^{(2e_kq_{2n_k})}, f_0^{(e_kq_{2n_k})}\right)_\ast(\mu)\to P
\]
and
\[
\left( f_0^{(3e_kq_{2n_k})},f_0^{(2e_kq_{2n_k})},
f_0^{(e_kq_{2n_k})}\right)_\ast(\mu)\to Q.
\]
We have
\begin{align*}
P&=\frac1{10}\Big(\delta_{(a+b,a)}+\delta_{(b+c,b)}+\delta_{(b+c,c)}+\delta_{(2b,b)}
+\delta_{(a+b,b)}\\
&\qquad+\delta_{(-(a+b),-b)}+\delta_{(-(b+c),-b)}+\delta_{(-(b+c),-c)}
+\delta_{(-2b,b)} +\delta_{(-(a+b),-b)}\Big)
\end{align*}
and
\begin{align*}
Q&=\frac1{10}\Big(\delta_{(a+b+c,a+b,a)}+\delta_{(2b+c,b+c,b)}+\delta_{(2b+c,b+c,c)}+
\delta_{(a+2b,2b,b)} +\delta_{(a+2b,a+b,b)}\\
&\qquad+\delta_{(-(a+b+c),-(a+b),-b)}+\delta_{(-(2b+c),-(b+c),-b)}+
\delta_{(-(2b+c),-(b+c),-c)}\\
&\qquad +\delta_{(-(a+2b,-2b,b)}
+\delta_{(-(a+2b),-(a+b),-b)}\Big).
\end{align*}
It follows that $P$ is invariant under the map
$(x,y)\mapsto(x,x-y)$ and therefore we cannot apply
Corollary~\ref{kryt} but $Q$ is {\bf not} invariant under the map
$(x,y,z)\mapsto(x,x-z,x-y)$, so by
Corollary~\ref{krytNOWEd2prime}, the resulting flow is not
reversible.

Note however that Corollary~\ref{kryt} is sufficient for
non-reversibility of $T^f$ if instead of $(2e_kq_{2n_k},
e_kq_{2n_k})$ we consider $(4e_kq_{2n_k}, 2e_kq_{2n_k})$.
\end{uw}

\begin{problem}
When $\ct=(T_t)_{t\in\R}$ is weakly mixing then the method of
showing non-isomorphism of $\ct$ and its inverse passes to
non-trivial factors (see Remark~\ref{bbbbbb}). Since all flows
non-isomorphic to their inverses considered in the paper are are
weakly mixing, in fact, their non-trivial factors are also
non-isomorphic to their inverses. A natural question arises if
whenever the weak closure joining method applies, $\ct$ is
disjoint with its inverse.
\end{problem}

\subsection{Absence of rational self-similarities}
In this section we will show that the construction presented in
Section~\ref{nonrevana} can be easily modified so that we obtain
an analytic weakly mixing flow on $\T^2$ such that no rational
number is its scale of self-similarity (in particular, it is not
reversible). It remains an open question whether irrational
numbers can be scales of self-similarity for analytic flows
(preserving  smooth measure)\footnote{In \cite{Ku} it is shown
that on  on each compact orientable surface of genus at least~$2$
there is a smooth (non-singular) non-self-similar flow. It is
unknown whether these constructions are non-reversible. It is also
unknown whether a smooth non-self-similar flow can be constructed
on $\T^2$.}  on $\T^2$.

We will now recall a result which will ensure that
a rational number is not a scale of self-similarity of an
ergodic flow.
\begin{pr}[\cite{Le-Wy}]
Let $\cT=(T_t)_{t\in\R}$  and $\cS=(S_t)_{t\in\R}$ be flows on
$(X,\cB,\mu)$ and $(Y,\cC,\nu)$ respectively. Assume additionally
that $\cT$ is weakly mixing and $\cS$ is ergodic. Moreover,
suppose that for a sequence $(t_k)\subset\R$ with $t_k\to\infty$
\[
\mu_{T_{t_k}}\to \int_\R \mu_{T_{-t}}\ dP(t)\quad\text{ and
}\quad\nu_{S_{t_k}}\to \int_\R \nu_{S_{-t}}\ dQ(t).
\]
If $P\neq Q$ then the  flows $\cT$ and $\cS$ are disjoint in the
sense of Furstenberg.
\end{pr}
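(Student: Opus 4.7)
My plan is to prove the sharper statement that every ergodic $\rho\in J(\cT,\cS)$ equals $\mu\otimes\nu$; the general case follows because ergodicity of $\mu$ and $\nu$ forces the marginals of the ergodic components of an arbitrary joining to be again $\mu$ and $\nu$, so the conclusion propagates through the ergodic decomposition. I therefore fix an ergodic $\rho\in J(\cT,\cS)$.

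The first step is to form a four-coordinate joining $\Lambda_k$ on $X\times X\times Y\times Y$, defined as the image of $\rho$ under
\[
(x,y)\mapsto\bigl(T_{t_k}x,\,x,\,S_{t_k}y,\,y\bigr).
\]
Each $\Lambda_k$ is invariant under the product flow $(T_r\times T_r\times S_r\times S_r)_{r\in\R}$; its $(1,2)$- and $(3,4)$-marginals are $\mu_{T_{t_k}}$ and $\nu_{S_{t_k}}$, which by hypothesis converge to $\int\mu_{T_{-t}}\,dP(t)$ and $\int\nu_{S_{-t}}\,dQ(t)$; its $(2,4)$-marginal is $\rho$; and its $(1,3)$-marginal equals $(T_{t_k}\times S_{t_k})_*\rho=\rho$ by joining invariance. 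By compactness of the space of joinings I pass to a subsequential limit $\Lambda_k\to\Lambda$.

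Next, I describe the ergodic decomposition of $\Lambda$ under the product flow. Aperiodicity of $\cT$ and $\cS$ makes the off-diagonals $\mu_{T_{-t}}$ mutually singular in $t$, so the ergodic decomposition of $\int\mu_{T_{-t}}\,dP(t)$ is exactly into these off-diagonals with $t$ distributed by $P$, and similarly for the $\nu_{S_{-s}}$ with $s\sim Q$. Consequently the ergodic components of $\Lambda$ are indexed by pairs $(t,s)\in\R^2$ via a measure $R$ on $\R^2$ whose marginals are $P$ and $Q$; each component $\lambda_{t,s}$ is concentrated on the graph $\{(x,T_tx,y,S_sy):(x,y)\in X\times Y\}$, and via the parametrisation $(x,y)\mapsto(x,T_tx,y,S_sy)$ corresponds to an ergodic joining $\tilde\rho_{t,s}\in J(\cT,\cS)$. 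Reading off the $(1,3)$- and $(2,4)$-marginals of $\Lambda$ gives
\[
\rho=\int_{\R^2}\tilde\rho_{t,s}\,dR(t,s)\qquad\text{and}\qquad\rho=\int_{\R^2}(T_t\times S_s)_*\tilde\rho_{t,s}\,dR(t,s).
\]

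Finally, I exploit ergodicity of $\rho$ twice. Extremality applied to the first identity yields $\tilde\rho_{t,s}=\rho$ for $R$-a.e.\ $(t,s)$, and substituting into the second, combined with $(T_{-s}\times S_{-s})_*\rho=\rho$, produces $\rho=\int(T_{t-s}\times\mathrm{id})_*\rho\,dR(t,s)$. The hypothesis $P\neq Q$ prevents $R$ from living on the diagonal $\{t=s\}$, so the push-forward of $R$ under $(t,s)\mapsto t-s$ is different from $\delta_0$. Since $T_r\times\mathrm{id}$ commutes with the product flow, each $(T_r\times\mathrm{id})_*\rho$ is again an ergodic joining, so a second extremality step forces $(T_{r_0}\times\mathrm{id})_*\rho=\rho$ for some $r_0\neq 0$. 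Translating this invariance via the Markov operator $\Phi_\rho\colon L^2(\nu)\to L^2(\mu)$ associated to $\rho$, I obtain $\Phi_\rho g=U_{T_{r_0}}\Phi_\rho g$ for every $g$, so each $\Phi_\rho g$ is $T_{r_0}$-invariant; weak mixing of $\cT$ makes $T_{r_0}$ ergodic (for $r_0\neq 0$), hence $\Phi_\rho g\equiv\int g\,d\nu$, which is exactly $\rho=\mu\otimes\nu$. The main difficulty is the clean identification of the ergodic decomposition of $\Lambda$ and of the marginals of its $(t,s)$-distribution, since this is where the hypothesis $P\neq Q$ is converted into a non-trivial time shift $r_0$ of $\cT$ stabilising $\rho$.
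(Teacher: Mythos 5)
The paper does not prove this proposition at all --- it is quoted from \cite{Le-Wy} --- so there is no in-paper argument to compare against; your proof is correct and is essentially the standard one from that reference: push $\rho$ to a four-fold coupling via $(x,y)\mapsto(T_{t_k}x,x,S_{t_k}y,y)$, take a weak limit, read off its ergodic decomposition from the $(1,2)$- and $(3,4)$-marginals to get a coupling $R$ of $P$ and $Q$, and use $P\neq Q$ to produce $r_0\neq 0$ with $(T_{r_0}\times\mathrm{id})_*\rho=\rho$, which weak mixing of $\cT$ turns into $\rho=\mu\otimes\nu$. Two minor points you should make explicit: aperiodicity of $\cS$ is not among the stated hypotheses (only ergodicity is), but it is a standing assumption in the relevant section and is genuinely needed for the off-diagonals $\nu_{S_{-s}}$ to be mutually singular, hence for the measure $Q$ (and the identification of the ergodic decomposition) to be well defined; and the ergodic components of the limit $\Lambda$ are properly indexed by the decomposition parameter $\omega$ together with measurable maps $t(\omega)$, $s(\omega)$ pushing the decomposing measure onto $P$ and $Q$, rather than by $(t,s)$ directly --- this is a harmless abuse that does not affect the two extremality steps.
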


Notice that when $\cT$ and $\cS$ are disjoint, then for  each
$r\in\R^\ast$ the flows $\cT\circ r$ and $\cS\circ r$ are also
disjoint (indeed, $J(\cT,\cS)=J(\cT\circ r,\cS\circ r)$).
Therefore, the following result holds.

\begin{wn}\label{co}
Let $a,b\in \N$. Assume that $\mathcal{ T}=(T_t)_{t\in\R}$ is a
weakly mixing flow on $\xbm$. Assume also that for some
$t_k\to\infty$
\[
\mu_{T_{at_k}} \to \int_\R \mu_{T_{-at}}\ dP(t)\quad\text{ and
}\quad \mu_{T_{bt_k}} \to \int_\R \mu_{T_{-bt}}\ dQ(t).\] for some
probability measures $P$ and $Q$ on $\R$. If $P\neq Q$ then $\cT
\perp \cT\circ (b/a)$.
\end{wn}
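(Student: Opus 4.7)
The plan is to apply the preceding proposition to the pair of time-rescaled flows $\cT_a:=\cT\circ a$ and $\cT_b:=\cT\circ b$ with the sequence $(t_k)$, and then to transport the resulting disjointness back through a common time change, exactly as in the observation preceding the statement.

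First I would check that the hypotheses of the proposition are met for this pair. Since $\cT$ is weakly mixing, both $\cT_a$ and $\cT_b$ are weakly mixing (weak mixing is preserved under nonzero linear time change), so in particular $\cT_a$ plays the role of the weakly mixing flow and $\cT_b$ is ergodic. Next, rewrite the two given limits so that the time parameter is $t_k$ itself and the off-diagonals belong to the relevant flow:
\[
\mu_{(T_a)_{t_k}}=\mu_{T_{at_k}}\longrightarrow\int_{\R}\mu_{T_{-at}}\,dP(t)=\int_{\R}\mu_{(T_a)_{-t}}\,dP(t),
\]
\[
\mu_{(T_b)_{t_k}}=\mu_{T_{bt_k}}\longrightarrow\int_{\R}\mu_{T_{-bt}}\,dQ(t)=\int_{\R}\mu_{(T_b)_{-t}}\,dQ(t),
\]
so that the proposition applies verbatim with the ``$P$'' and ``$Q$'' of its statement equal to the measures $P$ and $Q$ of our hypothesis.

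Since by assumption $P\neq Q$, the proposition yields $\cT_a\perp\cT_b$. To conclude, I would invoke the observation stated just before Corollary~\ref{co}, namely that $J(\cU,\cV)=J(\cU\circ r,\cV\circ r)$ for any ergodic flows $\cU,\cV$ and any $r\in\R^{\ast}$. Applying this with $r=1/a$ to $\cU=\cT_a$ and $\cV=\cT_b$ gives
\[
\cT=\cT_a\circ(1/a)\ \perp\ \cT_b\circ(1/a)=\cT\circ(b/a),
\]
which is the desired conclusion. The proof is essentially bookkeeping: no real obstacle arises, as the proposition does all the work once the two convergences are reinterpreted as off-diagonal convergences inside the rescaled flows, and the final step is a direct quote of the rescaling principle for joinings.
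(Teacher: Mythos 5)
Your proof is correct and follows exactly the route the paper intends: apply the disjointness proposition from \cite{Le-Wy} to the rescaled flows $\cT\circ a$ (weakly mixing) and $\cT\circ b$ (ergodic), identifying the given limits with the off-diagonal limits of these flows, and then transport disjointness via $J(\cT\circ a,\cT\circ b)=J(\cT,\cT\circ(b/a))$. The paper treats the corollary as an immediate consequence of precisely these two observations, so there is nothing to add.
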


We will now show what sequence of numbers $(d_{ki})$ to use in the
construction of a weakly mixing non-reversible analytic flow
(preserving a smooth measure) on $\T^2$ instead of the one
in~\eqref{eq:dki} to fulfill, for each natural $a<b$, the
assumptions of the above corollary. To this end we partition
$\N=\N_{-1,1}\bigcup_{a<b; a,b\in\N}\N_{a,b}$ so that $\N_{-1,1}$
and  each set $\N_{a,b}$ is infinite.  For $k\in\N_{-1,1}$ we
repeat the construction described in Section~\ref{nonrevana}, so
that the resulting flow will not be reversible.

Take $(a,b)\in\N^2$. By reversing the roles of $a$ and $b$,  we
may assume that $a<b$. We will consider only $k\in\N_{a,b}$.
Assume that $M_k=bM'_k$ and set
\[
(d_{ki})
:=\Big(\underbrace{\Big(t_0,\underbrace{-\frac{t_0}{b-1},\dots,-\frac{t_0}{b-1}}_{b-1
\text{ times}}\Big),\ldots,
\Big(t_0,-\frac{t_0}{b-1},\dots,-\frac{t_0}{b-1}\Big)}_{M'_k\;
\text{times}}\Big).
\]
It follows that for the analytic flow  $\cT=(T_t)_{t\in\R}$ on
$\T^2$ constructed in such a way as in Section~\ref{nonrevana} we
have (with $c=\int_{X}f\,d\mu)$
\begin{align*}
\lim_{k\to\infty, k\in\N_{a,b}} \mu_{T_{ace_kq_{2n_k}}}&= \int_\R \mu_{T_{-t}} \ d\Big(\frac{a}{b}
\delta_{\left(1-\frac{a}{b-1}\, \right)t_0} + \frac{b-a}{b}\,
\delta_{-\frac{a}{b-1}t_0} \Big)(t) \\& =\int_\R \mu_{T_{-at}}\
d\Big(\frac{a}{b}\, \delta_{\left(\frac{1}{a}-\frac{1}{b-1}
\right)t_0}  + \frac{b-a}{b}\, \delta_{-\frac{1}{b-1}t_0}\Big)(t)
\end{align*}
and
\begin{equation*}
\lim_{k\to\infty,k\in\N_{a,b}} \mu_{T_{bce_kq_{2n_k}}}=\mu_{Id} = \int_\R \mu_{T_{-t}}\ d\delta_0(t).
\end{equation*}
In view of Corollary~\ref{co}, this yields $\cT \perp \cT\circ
(a/b)$ for arbitrary natural $a<b$. Since $-1\notin I(\cT)$ and
$I(\cT)$  is a multiplicative subgroup of $\R^\ast$, we have
$\Q\cap I(\cT)=\{1\}$. Hence, we have proved the following result.
\begin{wn}
There is an analytic  weakly mixing flow (preserving a smooth measure) on $\T^2$ that is not
reversible and such that no rational number is its scale of
self-similarity.
\end{wn}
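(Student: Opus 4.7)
The plan is to carry out a single AACCP construction whose patterns $(d_{ki})$ alternate along a partition $\N=\N_{-1,1}\sqcup\bigsqcup_{a<b}\N_{a,b}$ into infinite subsets indexed by pairs of positive integers $(a,b)$ with $a<b$. On the indices $k\in\N_{-1,1}$ I would use the pattern \eqref{eq:dki} from Section~\ref{nonrevana}, which is designed to kill reversibility. On the indices $k\in\N_{a,b}$ I would take $M_k=bM_k'$ and set
\[
(d_{ki})=\Big(\underbrace{(t_0,-\tfrac{t_0}{b-1},\dots,-\tfrac{t_0}{b-1}),\dots,(t_0,-\tfrac{t_0}{b-1},\dots,-\tfrac{t_0}{b-1})}_{M_k'\text{ times}}\Big),
\]
each block of length $b$ summing to $0$ so that the AACCP constraint $\sum_i d_{ki}=0$ is preserved. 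By Proposition~\ref{klr1}, an irrational $\alpha$ realizing this AACCP exists, and by Proposition~\ref{klr2} the resulting cocycle $\varphi=\sum_k\varphi_k$ is cohomologous to an analytic cocycle $f_0$; fixing a large $d>0$ produces an analytic, weakly mixing, smooth area-preserving flow $\cT=T^{f_0+d}$ on $\T^2$.

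The heart of the argument is to show, for each fixed pair $a<b$ and along the subsequence $k\in\N_{a,b}$, the two rigidity-type limits
\[
\mu_{T_{ace_kq_{2n_k}}}\to\int_\R\mu_{T_{-at}}\,d\Big(\tfrac{a}{b}\delta_{(1/a-1/(b-1))t_0}+\tfrac{b-a}{b}\delta_{-t_0/(b-1)}\Big)(t),\qquad \mu_{T_{bce_kq_{2n_k}}}\to\mu_{Id},
\]
where $c=\int(f_0+d)\,d\mu$. These follow from Proposition~\ref{main1} applied to the sequences $(ae_kq_{2n_k})$ and $(be_kq_{2n_k})$, provided one checks that these are rigidity sequences for $Tx=x+\alpha$ and that the limiting distribution of $\varphi^{(ae_kq_{2n_k})}$, respectively $\varphi^{(be_kq_{2n_k})}$, is determined by the single summand $\varphi_k$. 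The second limit is the Dirac mass at $0$ because each block of length $b$ in our pattern sums to $0$, so $\varphi_k^{(be_kq_{2n_k})}\equiv 0$, while the first limit is a direct count of how $\varphi_k^{(ae_kq_{2n_k})}$ distributes across the columns of the Rokhlin tower of base $I_k$. Since $t_0\neq 0$ the two limit measures differ, so Corollary~\ref{co} gives $\cT\perp\cT\circ(a/b)$, and in particular $a/b\notin I(\cT)$. Repeating the argument of Section~\ref{nonrevana} along $\N_{-1,1}$ yields $-1\notin I(\cT)$.

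Because $I(\cT)$ is a multiplicative subgroup of $\R^*$, and $\Q^*$ is generated multiplicatively by $-1$ together with the ratios $a/b$ with $0<a<b$ integers, it follows from the previous two steps that $\Q\cap I(\cT)=\{1\}$; in particular $\cT$ is not reversible. The main obstacle, and the point where care is needed, is the decoupling verification: for each fixed $k$, one must confirm that $\varphi^{(ae_kq_{2n_k})}-\varphi_k^{(ae_kq_{2n_k})}$ (and likewise with $b$) tends to $0$ in measure, despite the patterns varying across the partition. This is the analogue of \eqref{klr5}--\eqref{klr11}: the supports of the $\varphi_j$'s lie in the nested interval bases $I_j$, the contributions of $j>k$ are controlled by the uniform bound \eqref{klr10}, and the contributions of $j<k$ by an $L^2$ estimate based on \eqref{mag2}. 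One must impose \eqref{mag2} for the full combined sequence $(M_k)$, and choose the subsequence $(n_k)$ extracted from the continued fraction expansion of $\alpha$ so that $a_{2n_k+1}=e_kM_k$ grows fast enough regardless of which $\N_{-1,1}$ or $\N_{a,b}$ the index $k$ belongs to; the residuality in Proposition~\ref{klr1} easily accommodates this, but the bookkeeping is the delicate technical core.
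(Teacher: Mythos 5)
Your proposal is correct and follows essentially the same route as the paper: the same partition $\N=\N_{-1,1}\cup\bigcup_{a<b}\N_{a,b}$, the same block patterns $(t_0,-\tfrac{t_0}{b-1},\dots,-\tfrac{t_0}{b-1})$ with $M_k=bM_k'$ on $\N_{a,b}$, the same pair of limit joinings along $(ace_kq_{2n_k})$ and $(bce_kq_{2n_k})$ fed into Corollary~\ref{co}, and the same multiplicative-subgroup argument to conclude $\Q\cap I(\cT)=\{1\}$. Your explicit flagging of the decoupling estimates (the analogues of \eqref{klr5}--\eqref{klr11} for the mixed pattern sequence) is exactly the technical point the paper absorbs into the phrase ``constructed in such a way as in Section~\ref{nonrevana}''.
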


\section{Non-reversible  Chacon's type automorphisms}\label{sec:chacon}
The following result was essentially proved in
\cite{Ry1}~\footnote{Formally, in \cite{Ry1} different sequences
are considered  and two limit joinings (not of the above form) are
considered, but the essence of the argument is the same.
Proposition~\ref{kryt1}  is a natural automorphism counterpart  of
Proposition~\ref{kryt}.}:
\begin{pr}\label{kryt1}
Assume that $T$ is an ergodic automorphism on $\xbm$. Assume also that
$$
\mu_{T^{2q_n},T^{q_n}}\to\int_{\Z^2}\mu_{T^{-a},T^{-b}}\,dP(a,b)
$$
for some probability measure $P$ on $\Z^2$. If the measure $P$ is
not invariant under $\theta(a,b)=(a,a-b)$ then $T$ is not
isomorphic to its inverse. In particular, $T$ is not reversible.
\end{pr}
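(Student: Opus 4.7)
The plan is to mimic the argument of Proposition~\ref{krytNOWE} with $d=2$ (whose flow specialization is Corollary~\ref{kryt}), adapting it to the discrete $\Z$-action of $T$. I will derive the contrapositive: if $T$ is isomorphic to $T^{-1}$ via some invertible measure-preserving $S:\xbm\to\xbm$ satisfying $STS^{-1}=T^{-1}$, then $\theta_{*}P=P$.

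First, I would exhibit one description of the limit of the sequence $\mu_{T^{-2q_n},T^{-q_n}}$ using $S$. The push-forward map $S_{*}:J_3(T)\to J_3(T)$, $(S_{*}\rho)(A\times B\times C):=\rho(S^{-1}A\times S^{-1}B\times S^{-1}C)$, is continuous and affine. A short computation using the identity $T^a S^{-1}=S^{-1}T^{-a}$ yields
\[S_{*}(\mu_{T^a,T^b})=\mu_{T^{-a},T^{-b}}\qquad\text{for all }a,b\in\Z.\]
Applying $S_{*}$ to the hypothesized convergence and invoking continuity gives
\[\mu_{T^{-2q_n},T^{-q_n}}\longrightarrow \int_{\Z^2}\mu_{T^a,T^b}\,dP(a,b).\]

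Second, I would compute the same limit independently via the coordinate-reversing involution $\sigma:X^3\to X^3$, $(x,y,z)\mapsto(z,y,x)$, using only $T$-invariance of $\mu$. The identities
\[\begin{aligned}\mu_{T^{-2q_n},T^{-q_n}}(A\times B\times C)&=\mu(T^{2q_n}A\cap T^{q_n}B\cap C)\\ &=\mu(A\cap T^{-q_n}B\cap T^{-2q_n}C)=\mu_{T^{2q_n},T^{q_n}}(C\times B\times A)\end{aligned}\]
show that $\mu_{T^{-2q_n},T^{-q_n}}=\sigma_{*}(\mu_{T^{2q_n},T^{q_n}})$. An analogous direct check gives $\sigma_{*}(\mu_{T^{-a},T^{-b}})=\mu_{T^a,T^{a-b}}$, so continuity of $\sigma_{*}$ combined with the hypothesis leads to
\[\mu_{T^{-2q_n},T^{-q_n}}\longrightarrow \int_{\Z^2}\mu_{T^a,T^{a-b}}\,dP(a,b)=\int_{\Z^2}\mu_{T^a,T^b}\,d(\theta_{*}P)(a,b).\]

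Comparing the two descriptions of the limit produces the identity $\int \mu_{T^a,T^b}\,dP=\int\mu_{T^a,T^b}\,d\theta_{*}P$. Since each $\mu_{T^a,T^b}$ is the image of the ergodic measure $\mu$ under the equivariant map $x\mapsto(T^ax,T^bx,x)$, it is ergodic for $T\times T\times T$; moreover, aperiodicity of the ergodic $T$ (automatic on a non-atomic space, and the atomic case is trivial) makes $(a,b)\mapsto\mu_{T^a,T^b}$ injective on $\Z^2$. Hence both sides display the ergodic decomposition of the same $3$-self-joining, and uniqueness of ergodic decomposition forces $\theta_{*}P=P$, contradicting the hypothesis. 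The only step requiring real care is the coordinate-permutation bookkeeping yielding $\sigma_{*}(\mu_{T^{-a},T^{-b}})=\mu_{T^a,T^{a-b}}$; once this is verified, the remainder is formal and parallel to the flow case.
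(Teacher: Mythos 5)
Your proof is correct and takes essentially the same route as the paper: the paper does not prove Proposition~\ref{kryt1} in detail (it attributes it to Ryzhikov and to the flow argument of Section~\ref{sec:joinings}), and your two computations of $\lim_n\mu_{T^{-2q_n},T^{-q_n}}$ --- once via $S_*$ and once via the coordinate-reversing involution, which is exactly the paper's relabelling $A_{\bar\vep}\mapsto A_{I(\bar\vep)}$ in the derivation of Proposition~\ref{krytNOWE} and Corollary~\ref{kryt} --- followed by uniqueness of the ergodic decomposition, reproduce that argument faithfully in the discrete setting. One small caveat: the parenthetical ``the atomic case is trivial'' is not right (for a finite rotation one can produce a non-$\theta$-invariant $P$ while $T$ is isomorphic to its inverse), but aperiodicity is the paper's standing assumption for ergodic automorphisms, so this does not affect the argument.
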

In this section we consider some rank one automorphisms in
construction  of which along a subsequence we repeat a Chacon's
type construction \cite{Ju-Ra-Sw}; we will obtain   non-reversible
rank one automorphisms.

Recall briefly  a rank one construction (see e.g.
\cite{MR1719722}). For a sequence of positive integers
$(r_n)_{n\in\N}$ with all $r_n\geq2$ and
$(s^{(n)}_1,\ldots,s^{(n)}_{r_n})_{n\in\N}$ with all $s^{(n)}_i$
non-negative integers we define a rank-one transformation by
giving an increasing sequence of Rokhlin towers $(C_n)_{n\in \N}$
such that each $C_n$ consists of $q_n$ pairwise disjoint intervals
of the same length (each such interval is called a level of
$C_n$). More precisely,
$C_n=\{C_{n,1},C_{n,2},\ldots,C_{n,q_n}\}$, the dynamics $T$ is
defined on $\bigcup_{i=1}^{q_n-1}C_{n,i}$ so that $T$ sends
linearly $C_{n,j}$ to $C_{n,j+1}$ for $j=1,\ldots,q_n-1$.  The
tower $C_{n+1}$ is obtained  first by cutting $C_n$ into $r_n$
subcolumns, say $C_n(i)$, $1\leq i\leq r_n$, of equal width,
placing $s^{(n)}_i$ spacers  over each subcolumn $C_n(i)$ and
finally stacking each subcolumn $C_n(i)$ on the top of $C_n(i+1)$
for $1\leq i<r_n$ in order to complete the definition of
$C_{n+1}$. The tower $C_{n+1}$ has the height
$q_{n+1}=r_nq_n+\sum_{i=1}^{r_n}s^{(n)}_i$. The ordering of levels
in $C_{n+1}$ is lexicographical from  the left to the right. The
dynamics $T$ on $\bigcup_{i=1}^{q_{n+1}-1}C_{n+1,i}$ is completed
by sending linearly $C_{n+1,q_n}$ to the first spacer over the
first subcolumn $C_n(1)$, sending this spacer to the one above it,
etc., and when reaching the top spacer we send it to
$C_{n+1,q_n+1}$. We keep going the same procedure for the
remaining columns and stop at the top spacer over $C_n(r_n)$. In
this way we obtain a measure-preserving transformation $T$ defined
on a standard Borel space $\xbm$ although, in general, $\mu$ is
only $\sigma$-finite.  Provided that the number of spacers is not
too large \cite{MR1719722}, $\mu$ can be assumed (and this is our
tacit standing assumption) to be a probability measure.

We will now describe the details of our  particular rank-one
construction. Fix an even positive integer $r\geq 4$ and an
increasing sequence $(n_k)_{k\in \N}$. Suppose that $r_{n_k}=r$
and $r_{n_k+1}\to \infty$\footnote{The assumption on the spacers
over the subcolumns $C_{n_k+1}(i)$ for $\left[
r_{n_k+1}/2\right]+1\leq i\leq r_{n_k+1}$ at step $n_k+1$ of the
construction yields some form of ``rigidity''. This condition can
be modified. What is important, is that we prevent the image of a
single level of tower $C_{n_k}$ under $T^{rq_{n_k}}$ from being
``too scattered''.} and in the construction we place one spacer
over $C_{n_k}(i)$ for $r/2+1\leq i\leq r$ and over $C_{n_k+1}(i)$
for $\left[ r_{n_k+1}/2\right]+1\leq i\leq r_{n_k+1}$.

\begin{tw}\label{nonrevchacon}
Under the above assumptions the constructed rank-one automorphism $T$ is not reversible.
\end{tw}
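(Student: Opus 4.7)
The plan is to verify the hypothesis of Proposition~\ref{kryt1} with $q_n:=q_{n_k}$, by exhibiting the ergodic decomposition $P\in\mathcal{P}(\Z^2)$ of the weak limit $\lim_k\mu_{T^{2q_{n_k}},T^{q_{n_k}}}$ and checking that $\theta_\ast P\neq P$ where $\theta(a,b)=(a,a-b)$. The mechanism is completely analogous to Ryzhikov's original argument: the Chacon-type step at $n_k$ injects an asymmetry in the relative dynamics of $T^{q_{n_k}}$ and $T^{2q_{n_k}}$ that is invisible on the level of $2$-joinings but detectable on the level of $3$-joinings.

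The key computation is combinatorial. A generic point $x$ lies at some level $\ell'$ of $C_{n_k}(j)$ inside $C_{n_k+1}(i)$ of the tower $C_{n_k+2}$. Since $q_{n_k+1}=rq_{n_k}+r/2$, translation by $q_{n_k}$ carries $x$ forward by exactly one $C_{n_k}$-subcolumn (into $C_{n_k}(j+1)$ of $C_{n_k+1}(i)$ when $j<r$, or wrapping into $C_{n_k+1}(i+1)$ when $j=r$), and its $C_{n_k}$-level drops by the number of spacers traversed---one per step-$n_k$ spacer (above $C_{n_k}(j')$ with $j'\geq r/2+1$) and one per step-$(n_k+1)$ spacer (above $C_{n_k+1}(i')$ with $i'\geq [r_{n_k+1}/2]+1$). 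Writing $\epsilon_i=1$ if $i\geq [r_{n_k+1}/2]+1$ and $\epsilon_i=0$ otherwise, a direct case analysis on $(j,\epsilon_i)$ produces the shifts $s_1(j,i),s_2(j,i)$ of $T^{q_{n_k}}x$ and $T^{2q_{n_k}}x$.

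A routine rank-one approximation (generating $\mathcal{B}$ by levels of the towers and using $\mu(X\setminus C_{n_k+2})\to 0$), combined with the assumption $r_{n_k+1}\to\infty$ which makes $\epsilon_i$ equidistributed on $\{0,1\}$ conditional on $j\in\{r-1,r\}$, then yields the convergence $\mu_{T^{2q_{n_k}},T^{q_{n_k}}}\to \int\mu_{T^{-a},T^{-b}}\,dP(a,b)$ with
\[
P=\frac{r-2}{2r}\delta_{(0,0)}+\frac{1}{r}\delta_{(1,0)}+\frac{r-3}{2r}\delta_{(2,1)}+\frac{1}{2r}\delta_{(3,1)}+\frac{1}{2r}\delta_{(1,1)}+\frac{1}{2r}\delta_{(2,2)}.
\]

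To finish, observe that $\theta$ swaps $(1,0)$ and $(1,1)$, but $P(\{(1,0)\})=1/r\neq 1/(2r)=P(\{(1,1)\})$; hence $P$ is not $\theta$-invariant. By Proposition~\ref{kryt1}, $T$ is not isomorphic to its inverse, and in particular not reversible. The main obstacle will be the careful bookkeeping in the boundary cases $j\in\{r-1,r\}$, where the wrap-around into $C_{n_k+1}(i+1)$ produces the additional atoms $(3,1),(1,1),(2,2)$ and the dependence on $\epsilon_i$ enters; the essential source of asymmetry is that $(1,0)$ arises only from the interior case $j=r/2$ (no $\epsilon_i$-dependence, fraction $1/r$), whereas its $\theta$-image $(1,1)$ arises only from $j=r$ combined with $\epsilon_i=0$ (fraction $1/(2r)$).
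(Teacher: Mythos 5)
Your proposal follows the same route as the paper: compute the weak limit of $\mu_{T^{2q_{n_k}},T^{q_{n_k}}}$ by tracking how $T^{q_{n_k}}$ and $T^{2q_{n_k}}$ act on levels of the towers $C_{n_k}$ (case analysis on the subcolumn index and on whether a second-stage spacer is crossed), and the resulting measure $P$ you obtain, with atoms $\frac{r-2}{2r}$ at $(0,0)$, $\frac1r$ at $(1,0)$, $\frac{r-3}{2r}$ at $(2,1)$, and $\frac1{2r}$ at each of $(1,1),(2,2),(3,1)$, coincides exactly with the paper's, as does the concluding appeal to Proposition~\ref{kryt1} via $P(1,0)\neq P(\theta(1,0))=P(1,1)$. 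The only difference is one of detail: the paper carries out the ``routine rank-one approximation'' explicitly with quantitative error bounds of order $1/(r\,r_{n_k+2})$, which you defer, but your plan is correct and complete in outline.
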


\begin{proof}
We claim that
\begin{equation*}
\mu_{T^{2q_{n_k}},T^{q_{n_k}}} \to \int_{\Z^2} \mu_{T^{-a},T^{-b}}\ dP(a,b),
\end{equation*}
where
\begin{equation}\label{atomy}
P(1,0)=\frac{1}{r},\ P(\theta(1,0))=P(1,1)=\frac{1}{2r}
\end{equation}
Once we have shown this, the claim will follow
by Proposition~\ref{kryt1}.
\begin{figure}[htb]
\centering
\includegraphics[width=0.7\linewidth]{wieze11}
\caption{Illustration of the proof in the case $r=4, s=6$.}
\label{fig}
\end{figure}
\begin{figure}[ht]
\centering
\includegraphics[width=0.7\linewidth]{wieze22}
\caption{A part of Figure~\ref{fig} magnified.}
\label{fig1}
\end{figure}
Since every measurable set can be  approximated by unions of
levels of sufficiently high towers, it suffices to show
that
\begin{equation*}
\mu_{T^{2q_{n_k}r},T^{q_{n_k}r}}(A[1]\times A[2]\times A[3]) \to \int_{\Z^2} \mu_{T^{-a},T^{-b}}(A[1]\times A[2]\times A[3])\ dP(a,b),
\end{equation*}
for $A[1],A[2],A[3]$ being single levels of the tower  $C_{k_0}$
for arbitrarily large $k_0\in\N$ and the measure $P$ satisfies
\eqref{atomy}.
Since the towers are arbitrarily high, the levels $A{[1]}, A[2],
A[3]$ can be assumed  not to be any of the first $3$ bottom
levels. Fix $k_0\in \N$ and let $A{[1]}, A[2], A[3]$ be single
levels of the tower $C_{n_{k_0}}$. Without loss of generality we
may assume that $C_{n_{k_0}}$ is at least of height $4$. For each
$k\geq k_0$ the sets $A{[1]}, A[2], A[3]$ become finite disjoint
unions of levels of tower $C_{n_k}$:
\begin{equation}\label{55-}
A[1]=\bigcup_{l=1}^{l_k}A^{(k)}[1,l],\ A[2]=\bigcup_{l=1}^{l_k}A^{(k)}[2,l],\ A[3]=\bigcup_{l=1}^{l_k}A^{(k)}[3,l]
\end{equation}
for some $l_k\geq 1$. Moreover,
\begin{equation}\label{2a}
    \begin{alignedat}{1}
        \text{for any $1\leq l<l'\leq l_k$ there are at least $3$ levels}\\
        \text{of tower $C_{n_k}$ between $A^{(n_k)}[t,l]$ and $A^{(n_k)}[t,l']$ for $t=1,2,3$.}
    \end{alignedat}
\end{equation}

Let $\vep>0$. We claim that for $k\geq k_0$  sufficiently large
and for $A^{(k)}$ being a level of tower $C_{n_k}$ which is not
one of the first $3$ levels we have {\allowdisplaybreaks
\begin{align}\label{1bl}
&\Big|\mu_{T^{2rq_{n_k}},T^{rq_{n_k}}}\big(A^{(k)}\times A^{(k)}\times A^{(k)}\big) - \frac{r-2}{2r}\mu \big(A^{(k)}\big)\Big|
< \vep \mu\big(A^{(k)}\big),\notag\\
&\Big| \mu_{T^{2rq_{n_k}},T^{rq_{n_k}}}\big(A^{(k)}\times A^{(k)}\times TA^{(k)}\big) - \frac{1}{2r}\mu \big(A^{(k)}\big)\Big|
< \vep \mu\big(A^{(k)}\big),\notag\\
&\Big| \mu_{T^{2rq_{n_k}},T^{rq_{n_k}}}\big(A^{(k)}\times A^{(k)}\times T^{2}A^{(k)}\big) - \frac{1}{2r}\mu \big(A^{(k)}\big)\Big|
< \vep \mu\big(A^{(k)}\big),\\
&\Big| \mu_{T^{2rq_{n_k}},T^{rq_{n_k}}}\big(A^{(k)}\times TA^{(k)}\times TA^{(k)}\big) - \frac{1}{r}\mu \big(A^{(k)}\big)\Big|
< \vep \mu\big(A^{(k)}\big),\notag\\
&\Big| \mu_{T^{2rq_{n_k}},T^{rq_{n_k}}}\big(A^{(k)}\times TA^{(k)}\times T^{2}A^{(k)}\big) - \frac{r-3}{2r}\mu\big(A^{(k)}\big)\Big|
< \vep \mu\big(A^{(k)}\big),\notag\\
&\Big| \mu_{T^{2rq_{n_k}},T^{rq_{n_k}}}\big(A^{(k)}\times T^{2}A^{(k)}\times T^{3}A^{(k)}\big) - \frac{1}{2r}\mu \big(A^{(k)}\big)\Big|
< \vep \mu\big(A^{(k)}\big).\notag
\end{align}}
The proof of all of these inequalities goes along the  same lines,
we will prove only the fifth of them, i.e.
\begin{equation}\label{ddd}
\Big| \mu_{T^{2rq_{n_k}},T^{rq_{n_k}}}\big(A^{(k)}\times TA^{(k)}
\times T^{2}A^{(k)}\big) - \frac{r-3}{2r}\mu\big(A^{(k)}\big)\Big|
< \vep \mu\big(A^{(k)}\big)
\end{equation}
(the proof of~\eqref{ddd} contains all elements of the  proofs of
the other inequalities in~\eqref{1bl}). To make the notation
simpler we will write $C$ for the tower $C_{n_k}$ and we will also
change the notation for the subcolumns. Now, tower $C_{n_k}$ is
cut into $r$ subcolumns of equal width, denoted from left to right
by $C^i$, $1\leq i\leq r$, with one spacer placed spacer over
subcolumns $C^i$ for $r/2+1\leq i\leq r$. Then each of $C^i$ is
cut into $s$ subcolumns of equal width, denoted from left to right
by $C^{i,j}$, $1\leq j\leq r_{n_k+1}$, with one spacer placed over
subcolumns $C^{r,j}$ for $\left[\frac{r_{n_k+1}}{2} \right]+1 \leq
j\leq r_{n_k+1}$. Let
\begin{equation*}
A^{(k)}_i:=A^{(k)}\cap C^i,\ A^{(k)}_{i,j}:=A^{(k)}\cap C^{i,j}
\end{equation*}
for $1\leq i\leq r$ and $1\leq j\leq r_{n_k+1}$.
\\
Notice that we have
\begin{eqnarray}\label{1c}
    \begin{aligned}
        T^{q_{n_k}}A^{(k)}_{i}&=A^{(k)}_{i+1} &\text{ for }1\leq i\leq r/2,\\
        T^{q_{n_k}}A^{(k)}_{i}&=T^{-1}A^{(k)}_{i+1} &\text{ for }r/2+1\leq i\leq r-1
    \end{aligned}
\end{eqnarray}
and
\begin{equation}\label{1d}
    \begin{alignedat}{1}
        T^{q_{n_k}}A^{(k)}_{r,j}&=T^{-1}A^{(k)}_{1,j+1}\text{ for }1\leq j\leq \left[\frac{r_{n_k+1}}{2} \right],\\
        T^{q_{n_k}}A^{(k)}_{r,j}&=T^{-2}A^{(k)}_{1,j+1}\text{ for }\left[\frac{r_{n_k+1}}{2} \right]+1\leq j\leq r_{n_k+1}-1.
    \end{alignedat}
\end{equation}
Moreover
\begin{equation}\label{3a}
\mu\Big(\!A^{(k)}\!\setminus\!\Big(\!\bigcup_{1\leq i\leq r-1}\!A_{i}^{(k)}\cup \!\bigcup_{1\leq j\leq r_{n_k+1}-1}\!A^{(k)}_{r,j}\!\Big)\!\Big)
\!=\!\mu\big(A^{(k)}_{r,r_{n_k+1}}\big)\!=\!\frac{1}{rr_{n_k+2}}\mu\big(A^{(k)}\big).
\end{equation}
We also have
\begin{equation}\label{1c2}
    \begin{alignedat}{2}
        T^{2q_{n_k}}A^{(k)}_{i}&=A^{(k)}_{1,i+2} & \text{  for }1\leq i\leq r/2-1,\\
        T^{2q_{n_k}}A^{(k)}_{1,r/2}&=T^{-1}A^{(k)}_{1,r/2+2}, &\\
        T^{2q_{n_k}}A^{(k)}_{1,j}&=T^{-2}A^{(k)}_{1,j+2} & \text{  for }r/2+1\leq j\leq r-2\\
    \end{alignedat}
\end{equation}
and
\begin{equation}\label{1d2}
    \begin{alignedat}{1}
        T^{2q_{n_k}}A^{(k)}_{r-1,j}&=T^{-2}A^{(k)}_{1,j+1}\text{ for }1\leq j\leq \left[\frac{r_{n_k+1}}{2} \right],\\
        T^{2q_{n_k}}A^{(k)}_{r-1,j}&=T^{-3}A^{(k)}_{1,j+1}\text{ for }\left[\frac{r_{n_k+1}}{2} \right]+1\leq j\leq r_{n_k+1}-1
    \end{alignedat}
\end{equation}
and
\begin{equation}\label{1dd}
    \begin{alignedat}{1}
        T^{2q_{n_k}}A^{(k)}_{r,j}&=T^{-1}A^{(k)}_{2,j+1}\text{ for }1\leq j\leq \left[\frac{r_{n_k+1}}{2} \right],\\
        T^{2q_{n_k}}A^{(k)}_{r,j}&=T^{-2}A^{(k)}_{2,j+1}\text{ for }\left[\frac{r_{n_k+1}}{2} \right]+1\leq j\leq r_{n_k+1}-1.
    \end{alignedat}
\end{equation}
Moreover
\begin{multline}\label{3b}
\mu\left(A^{(k)}\setminus\left(\bigcup_{1\leq i\leq r-2}A_{i}^{(k)}\cup \bigcup_{1\leq j\leq r_{n_k+1}-1}A^{(k)}_{r-1,j}\cup \bigcup_{1\leq j\leq r_{n_k+1}-1}A^{(k)}_{r,j}\right)\right)\\
=\mu\left(A^{(k)}_{r-1,r_{n_k+1}}\right)+\mu\left(A^{(k)}_{r,r_{n_k+1}}\right)=\frac{2}{rr_{n_k+1}}\mu\left(A^{(k)}\right).
\end{multline}
Using all of the~\cref{1c,1d,3a} we obtain
\begin{equation}\label{eeprim}
\mu\left(T^{q_{n_k}}A^{(k)}\setminus \bigcup_{p_2\in\{0,1,2\}}T^{-p_2}A^{(k)} \right)<\frac{1}{rr_{n_k+2}}\mu\left(A^{(k)}\right)
\end{equation}
and using~\cref{1c2,1d2,1dd,3b}
\begin{equation}\label{eeeprim}
\mu\left(T^{2q_{n_k}}A^{(k)}\setminus \bigcup_{p_3\in\{0,1,2,3\}}T^{-p_3}A^{(k)} \right)<\frac{2}{rr_{n_k+2}}\mu\left(A^{(k)}\right).
\end{equation}
We will show now that~\eqref{ddd} is true. 
We have
{\allowdisplaybreaks
\begin{align*}
A^{(k)}&\cap TT^{q_{n_k}}A^{(k)}\cap T^{2}T^{2rq_{n_k}}A^{(k)}\\
&\stackrel{(*)}{\simeq}\Big(A^{(k)}\cap TT^{q_{n_k}}\Big(\bigcup_{1\leq i\leq r-1}A_{i}^{(k)}\cup \bigcup_{1\leq j\leq r_{n_k+1}-1}A_{r,j}^{(k)} \Big) \Big)\cap T^{2}T^{2q_{n_k}}A^{(k)}\\
&\stackrel{\eqref{1c},\eqref{1d},\eqref{3a}}{=}\Big(\bigcup_{r/2+1\leq i\leq r-1}A_{i+1}^{(k)}\cup\bigcup_{1\leq j\leq \left[\frac{r_{n_k+1}}{2}\right]}A_{1,j+1}^{(k)}  \Big)\cap  T^{2}T^{2q_{n_k}}A^{(k)}\\
&\stackrel{(**)}{\simeq}\Big(\bigcup_{r/2+1\leq i\leq r-1}A_{i+1}^{(k)}\cup\bigcup_{1\leq j\leq \left[\frac{r_{n_k+1}}{2}\right]}A_{1,j+1}^{(k)}  \Big)\\
&\quad\cap T^{2}T^{2q_{n_k}}\Big( \bigcup_{1\leq i\leq r-2}A_{i}^{(k)}\cup \bigcup_{1\leq j\leq r_{n_k+1}-1}A_{r-1,j}^{(k)}\cup\bigcup_{1\leq j\leq r_{n_k+1}-1}A_{r,j}^{(k)}\Big)\\
&\stackrel{\eqref{1c2},\eqref{1d2},\eqref{1dd}}{=}\Big(\bigcup_{r/2+1\leq i\leq r-1}A_{i+1}^{(k)}\cup\bigcup_{1\leq j\leq \left[\frac{r_{n_k+1}}{2}\right]}A_{1,j+1}^{(k)}  \Big)\\
&\quad\cap \Big(\bigcup_{r/2+1\leq i\leq r-2}A_{i+2}^{(k)}\cup \bigcup_{1\leq j\leq \left[\frac{r_{n_k+1}}{2}\right]}A_{1,j+1}^{(k)}\cup \bigcup_{\left[\frac{r_{n_k+1}}{2} \right]+1\leq j\leq r_{n_k+1}-1}A_{2,j+1}^{(k)}  \Big)\\
&=\bigcup_{r/2+3\leq i\leq r}A_{i}^{(k)}\cup \bigcup_{2\leq j\leq \left[\frac{r_{n_k+1}}{2} \right]+1}A_{1,j},
\end{align*}}
where $(*)$ and $(**)$  hold  up to a set of measure
$\frac{1}{rr_{n_k+2}}\mu(A^{(k)})$ and
$\frac{2}{rr_{n_k+2}}\mu(A^{(k)})$, respectively.
Therefore up to an error of  absolute value at most
$\frac{3}{rr_{n_k+2}}\mu(A^{(k)})$ {\allowdisplaybreaks
\begin{align*}
\mu&_{T^{2q_{n_k}},T^{q_{n_k}}} \left(A^{(k)}\times TA^{(k)}\times T^{2}A^{(k)} \right)=\mu\left(T^{-2q_{n_k}}A^{(k)}\cap T^{-q_{n_k}+1}A^{(k)}\cap T^{2}A^{(k)} \right)\\
&=\mu\left(A^{(k)}\cap TT^{q_{n_k}}A^{(k)}\cap T^{2}T^{2q_{n_k}}A^{(k)} \right)\\
&\simeq\mu\left(\bigcup_{r/2+3\leq i\leq r}A_{i}^{(k)}\cup \bigcup_{2\leq j\leq \left[ \frac{r_{n_k+1}}{2} \right]+1}A_{1,j}^{(k)} \right)\\
&=\frac{r-r/2-3+1}{r}\cdot \mu(A^{(k)})+\frac{\left[\frac{r_{n_k+1}}{2} \right]+1-2+1}{rr_{n_k+2}}\cdot\mu(A^{(k)})\\
&=\frac{r-4}{2r}\cdot \mu(A^{(k)})+\frac{\left[\frac{r_{n_k+1}}{2} \right]}{rr_{n_k+2}}\mu(A^{(k)})
=\begin{cases}
\frac{r-3}{2r}\mu(A^{(k)}),& \!2|r_{n_k+1} \\
\frac{r-3}{2r}\mu(A^{(k)})-\frac{1}{2rr_{n_k+1}}\mu(A^{(k)}),& \!2\not |r_{n_k+1},
\end{cases}
\end{align*}}
i.e.~\eqref{ddd} indeed holds. In a similar way, all of the inequalities~\eqref{1bl} hold.
We obtain
{\allowdisplaybreaks
\begin{align*}
&\Big|\mu_{T^{2q_{n_k}},T^{q_{n_k}}}\left(A[1]\times TA[1]\times T^{2}A[1] \right)\\
&\quad-\frac{r-3}{2r}\mu_{T^{-2},T^{-1}}\left(A[1]\times T A[1]\times T^{2}A[1]\right) \Big|\\
&\stackrel{\eqref{55-}}{=}\Big|\mu_{T^{2q_{n_k}},T^{q_{n_k}}}\Big(\Big(\bigcup_{l=1}^{l_k}A^{(k)}[1,l]
\Big)\!\times\!\Big(\bigcup_{l=1}^{l_k}TA^{(k)}[1,l]\Big)\!\times\!\Big(
\bigcup_{l=1}^{l_k}T^{2}A^{(k)}[1,l]\Big)\Big)\\
&\quad-\!\frac{r-3}{2r}\mu_{T^{-2},T^{-1}}\!\Big(\!\Big(\!\bigcup_{l=1}^{l_k}A^{(k)}
[1,l]\!\Big)\!\times\!\Big(\!\bigcup_{l=1}^{l_k}
TA^{(k)}[1,l]\!\Big)\!\times\!\Big(\!\bigcup_{l=1}^{l_k}T^{2}A^{(k)}[1,l]\!\Big)\!\Big)\!\Big|\\
&{\leq}\sum_{l=1}^{l_k}\left|\mu_{T^{2q_{n_k}},T^{q_{n_k}}}\left(A^{(k)}[1,l]\times TA^{(k)}[1,l]\times T^{2}A^{(k)}[1,l] \right) \right.\\
&\quad\left. -\frac{r-3}{2r}\mu_{T^{-2},T^{-1}}\left( A^{(k)}[1,l]\times TA^{(k)}[1,l]\times T^{2}A^{(k)}[1,l]\right) \right|\\
&\quad+\sum_{l=1}^{l_k}\sum_{\substack{1\leq l',l''\leq l_k \\ \#\{l,l',l''\}>1}}\mu_{T^{2q_{n_k}},T^{q_{n_k}}}\left(A^{(k)}[1,l]\times TA^{(k)}[1,l']\times T^{2}A^{(k)}[1,l''] \right)\\
&\quad+\frac{r-3}{2r}\sum_{l=1}^{l_k}\!\sum_{\substack{1\leq l',l''\leq l_k \\ \#\{l,l',l''\}>1}}\!\mu_{T^{-2},T^{-1}}\!\big(A^{(k)}[1,l]\times TA^{(k)}[1,l']\times T^{2}A^{(k)}[1,l''] \big)\\
&=\sum_{l=1}^{l_k}\left|\mu_{T^{2q_{n_k}},T^{q_{n_k}}}\left(A^{(k)}[1,l]\times TA^{(k)}[1,l]\times T^{2}A^{(k)}[1,l] \right) \right.\\
&\quad\left. -\frac{r-3}{2r}\mu\left( T^{2}A^{(k)}[1,l]\cap T^{2}A^{(k)}[1,l]\cap T^{2}A^{(k)}[1,l]\right) \right|\\
&\quad+\sum_{l=1}^{l_k}\sum_{\substack{1\leq l',l''\leq l_k \\ \#\{l,l',l''\}>1}}\mu_{T^{2q_{n_k}},T^{q_{n_k}}}\left(A^{(k)}[1,l]\times TA^{(k)}[1,l']\times T^{2}A^{(k)}[1,l''] \right)\\
&\quad+\frac{r-3}{2r}\sum_{l=1}^{l_k}\sum_{\substack{1\leq l',l''\leq l_k \\ \#\{l,l',l''\}>1}}\mu\left(T^{2}A^{(k)}[1,l]\cap T^{2}A^{(k)}[1,l']\cap T^{2}A^{(k)}[1,l''] \right)\\
&{=}\sum_{l=1}^{l_k}\Big|\mu_{T^{2q_{n_k}},T^{q_{n_k}}}\big(A^{(k)}[1,l]\times TA^{(k)}[1,l]\times T^{2}A^{(k)}[1,l]\big) -\frac{r-3}{2r}\mu\big( A^{(k)}[1,l]\big) \Big|\\
&\quad+\sum_{l=1}^{l_k}\sum_{\substack{1\leq l',l''\leq l_k \\ \#\{l,l',l''\}>1}}\mu_{T^{2q_{n_k}},T^{q_{n_k}}}\left(A^{(k)}[1,l]\times TA^{(k)}[1,l']\times T^{2}A^{(k)}[1,l''] \right)\\
&\stackrel{\eqref{1bl}}{<} \sum_{l=1}^{l_k}\vep \mu\left(A^{(k)}[1,l] \right)\\
&\quad+\sum_{l=1}^{l_k}\sum_{\substack{1\leq l''\leq l_k \\ l''\neq l}}\mu_{T^{2q_{n_k}},T^{q_{n_k}}}\left(A^{(k)}[1,l]\times TA^{(k)}[1,l]\times T^{2}A^{(k)}[1,l''] \right)\\
&\quad+\sum_{l=1}^{l_k}\sum_{\substack{1\leq l'\leq l_k\\ l'\neq l}}\mu_{T^{2q_{n_k}},T^{q_{n_k}}}\left(A^{(k)}[1,l]\times TA^{(k)}[1,l']\times T^{2}A^{(k)}[1,l] \right)\\
&\quad+\sum_{l=1}^{l_k}\sum_{\substack{1\leq l',l''\leq l_k\\ l',l''\neq l}}\mu_{T^{2q_{n_k}},T^{q_{n_k}}}\left(A^{(k)}[1,l]\times TA^{(k)}[1,l']\times T^{2}A^{(k)}[1,l''] \right)\\
&=\vep \mu(A[1]) \\
&\quad+\sum_{l=1}^{l_k}\sum_{\substack{1\leq l''\leq l_k \\ l''\neq l}} \mu\left(T^{-2q_{n_k}}A^{(k)}[1,l]\cap T^{-q_{n_k}}TA^{(k)}[1,l] \cap T^{2}A^{(k)}[1,l'']\right) \\
&\quad+\sum_{l=1}^{l_k}\sum_{\substack{1\leq l'\leq l_k \\ l''\neq l}}  \mu\left(T^{-2q_{n_k}}A^{(k)}[1,l]\cap T^{-q_{n_k}}TA^{(k)}[1,l'] \cap T^{2}A^{(k)}[1,l]\right) \\
&\quad+\sum_{l=1}^{l_k}\sum_{\substack{1\leq l',l''\leq l_k \\ l',l''\neq l}} \mu\left(T^{-2q_{n_k}}A^{(k)}[1,l]\cap T^{-q_{n_k}}TA^{(k)}[1,l'] \cap T^{2}A^{(k)}[1,l'']\right) \\
&\leq \vep \mu(A[1]) 
+\sum_{l=1}^{l_k}\sum_{\substack{1\leq l''\leq l_k \\ l''\neq l}} \mu\left(T^{-2q_{n_k}}A^{(k)}[1,l] \cap T^{2}A^{(k)}[1,l'']\right)\\
&\quad+2\sum_{l=1}^{l_k}\sum_{\substack{1\leq l'\leq l_k \\ l'\neq l}} \mu\left(T^{-2q_{n_k}}A^{(k)}[1,l] \cap T^{-q_{n_k}}TA^{(k)}[1,l']\right)\\
&= \vep \mu(A[1]) 
+\sum_{l=1}^{l_k}\sum_{\substack{1\leq l''\leq l_k \\ l''\neq l}} \mu\left(A^{(k)}[1,l]\cap T^{2q_{n_k}}T^{2}A^{(k)}[1,l''] \right)  \\
&\quad+2\sum_{l=1}^{l_k}\sum_{\substack{1\leq l'\leq l_k \\ l'\neq l}}\mu\left(A^{(k)}[1,l]\cap T^{q_{n_k}}TA^{(k)}[1,l'] \right)  \\
&= \vep \mu(A[1]) 
+\sum_{l''=1}^{l_k}\sum_{\substack{1\leq l\leq l_k \\ l\neq l''}} \mu\left(A^{(k)}[1,l]\cap T^{2q_{n_k}}T^{2}A^{(k)}[1,l''] \right)  \\
&\quad+2\sum_{l'=1}^{l_k}\sum_{\substack{1\leq l\leq l_k \\ l\neq l'}}\mu\left(A^{(k)}[1,l]\cap T^{q_{n_k}}TA^{(k)}[1,l'] \right)  \\
&= \vep \mu(A[1]) 
+\sum_{l''=1}^{l_k} \mu\Big(\Big(\bigcup_{\substack{1\leq l\leq l_k\\ l\neq l''}}A^{(k)}[1,l]\Big) \cap T^{2q_{n_k}}T^{2}A^{(k)}[1,l''] \Big)  \\
&\quad+2\sum_{l'=1}^{l_k} \mu\Big(\Big(\bigcup_{\substack{1\leq l\leq l_k\\ l\neq l'}}A^{(k)}[1,l]\Big) \cap T^{q_{n_k}}TA^{(k)}[1,l'] \Big)  \\
&\stackrel{\eqref{eeprim},\eqref{eeeprim},\eqref{2a}}{\leq}\vep\mu(A[1])
+\sum_{l''=1}^{l_k}\frac{2}{rr_{n_k+2}}\mu(A^{(k)}[1,l''])
+2\sum_{l'=1}^{l_k}\frac{1}{rr_{n_k+2}}\mu(A^{(k)}[1,l']) \\
&=\left(\vep+\frac{4}{rr_{n_k+2}} \right)\mu(A[1]).
\end{align*}}%
Hence
\begin{align}\label{A:5}
\begin{aligned}
&\mu_{T^{2q_{n_k}},T^{q_{n_k}}}\left(A[1]\times TA[1]\times T^{2}A[1]\right)\\
&\quad\to \frac{r-3}{2r}\mu_{T^{-2},T^{-1}}\Big(A[1]\times TA[1]\times T^{2}A[1]\Big)
=\frac{r-3}{2r}\mu(A[1]).
\end{aligned}
\end{align}
In a similar way
\begin{align}\label{A:1}
\begin{aligned}
        &\mu_{T^{2q_{n_k}},T^{q_{n_k}}}\left(A[1]\times A[1]\times A[1] \right)\\
        &\quad\to\frac{r-2}{2r}\mu_{I,I}\left(A[1]\times A[1]\times A[1]\right)=\frac{r-2}{2r}\mu\left(A[1]\right),
\end{aligned}
\end{align}
\begin{align}\label{A:2}
\begin{aligned}
        &\mu_{T^{2q_{n_k}},T^{q_{n_k}}}\left(A[1]\times A[1]\times TA[1] \right)\\
        &\quad\to\frac{1}{2r}\mu_{T^{-1},T^{-1}}\left(A[1]\times A[1]\times TA[1]\right)=\frac{1}{2r}\mu\left(A[1]\right),
\end{aligned}
\end{align}
\begin{align}\label{A:3}
\begin{aligned}
     &\mu_{T^{2q_{n_k}},T^{q_{n_k}}}\left(A[1]\times A[1]\times T^{2}A[1] \right)\\
&\quad\to\frac{1}{2r}\mu_{T^{-2},T^{-2}}\left(A[1]\times A[1]\times T^{2}A[1]\right)=\frac{1}{2r}\mu\left(A[1]\right),
\end{aligned}
\end{align}
\begin{align}\label{A:4}
 \begin{aligned}
      & \mu_{T^{2q_{n_k}},T^{q_{n_k}}}\left(A[1]\times TA[1]\times TA[1] \right)\\
      &\quad \to\frac{1}{r}\mu_{T^{-1},I}\left(A[1]\times TA[1]\times TA[1]\right)=\frac{1}{r}\mu\left(A[1]\right),
\end{aligned}
\end{align}
\begin{align}\label{A:6}
\begin{aligned}
       & \mu_{T^{2q_{n_k}},T^{q_{n_k}}}\left(A[1]\times T^{2}A[1]\times T^{3}A[1] \right)\\
       &\quad\to\frac{1}{2r}\mu_{T^{-3},T^{-1}}\left(A[1]\times T^{2}A[1]\times T^{3}A[1]\right)=\frac{1}{2r}\mu\left(A[1]\right).
\end{aligned}
\end{align}
Let
$I=\left\{(0,0),(0,1),(0,2),(1,1),(1,2),(2,3)\right\}.$
Since
$$
\frac{r-3}{2r}+\frac{r-1}{2r}+\frac{1}{2r}+\frac{1}{2r}+\frac{1}{r}+\frac{1}{2r}=1,
$$ we have
$$
\mu_{T^{2q_{n_k}},T^{q_{n_k}}}\left(\bigcup_{(p_2,p_3)\in I}A[1]\times T^{p_2}A[1]\times T^{p_3}A[1] \right) \to \mu\left(A[1]\right).
$$
Notice that for $(p_2',p_3')\not\in I$ such that $T^{p_2'}A[1]$ and $T^{p_3'}A[1]$ are levels of tower $C_{n_{k_0}}$
$$
A[1]\times T^{p_2'}A[1]\times T^{p_3'}A[1] \subset A[1] \times \left(\bigcup_{(p_2,p_3)\in I} T^{p_2}A[1]\times T^{p_3}A[1]\right)^c.
$$
Therefore
\begin{align}\label{A:7}
\begin{aligned}
\mu&_{T^{2q_{n_k}},T^{q_{n_k}}}\left(A[1]\times T^{p_2'}A[1]\times T^{p_3'}A[1]\right)
\\
&\leq \mu_{T^{2q_{n_k}},T^{q_{n_k}}}\Big(A[1]\times \Big(\bigcup_{(p_2,p_3)\in I} T^{p_2}A[1]\times T^{p_3}A[1]\Big)^c \Big)\\
&=\mu\left(A[1]\right)-\mu_{T^{2q_{n_k}},T^{q_{n_k}}}\Big(A[1]\times \Big(\bigcup_{(p_2,p_3)\in I}T^{p_2}A[1]\times T^{p_3}A[1] \Big) \Big)\\
&\to \mu\left(A[1] \right)-\mu\left(A[1] \right)=0.
\end{aligned}
\end{align}
Let now
\begin{multline*}
P=\frac{r-2}{2r}\delta_{(0,0)}+\frac{1}{2r}\delta_{(1,1)}+\frac{1}{2r}\delta_{(2,2)}+\frac{1}{r}\delta_{(1,0)}+\frac{r-3}{2r}\delta_{(2,1)}+\frac{1}{2r}\delta_{(3,1)}.
\end{multline*}
Notice that for $(p_2',p_3')\not\in I$ such that $T^{p_2'}A[1],T^{p_3'}A[1]$ are levels of tower $C_{n_{k_0}}$ we have
\begin{equation}\label{A:8}
\int_{\Z^2}\mu_{T^{-a},T^{-b}}\left(A[1]\times T^{p_2'}A[1]\times T^{p_3'}A[1]\right)\ dP(a,b)=0.
\end{equation}
Using~\cref{A:5,A:1,A:2,A:3,A:4,A:6,A:7,A:8} we obtain
\begin{multline*}
    \mu_{T^{2q_{n_k}},T^{q_{n_k}}}(A[1]\times A[2]\times A[3])\to
\int_{\Z^2} \mu_{T^{-a},T^{-b}}(A[1]\times A[2]\times A[3])\ dP(a,b).
\end{multline*}
This implies~\eqref{atomy} and the claim follows.
\end{proof}

\begin{uw}
In the same way, one can show  that some rank one flows are not
reversible. The construction of such flows is similar to the rank
one automorphisms considered in the above theorem. They are also
determined by a sequence of integers  $(r_n)_{n\in\N}$ which
denote the number of subcolumns at each step of the construction.
Now, the role of spacers is played by rectangles placed above the
subcolumns. The additional assumption in the ``flow version'' of
our theorem is that along the subsequence $(n_k)$ the rectangles
are of fixed height and at steps $n_k,n_{k}+1$ the are placed over
the same subcolumns as in Theorem~\ref{nonrevchacon}.
\end{uw}

\begin{uw}
A similar method can be used to show  non-reversibility of the
classical Chacon's automorphism, i.e. the rank one automorphism
which can be constructed as described in the beginning of this
section, dividing the column at each step of the construction into
three subcolumns and placing a spacer above the middle one. More
precisely, to show that this automorphism is not reversible, one
can use the ``automorphism counterpart'' of
Corollary~\ref{krytNOWEd2prime}.
\end{uw}


\section{Topological self-similarities of special flows}\label{sec:topol}
In this section we will deal with topological self-similarities of
continuous flows $\ct=(T_t)_{t\in\R}$ on a compact metric spaces.
For each such flow denote by $I_{top}(\ct)$ the subgroup of all
$s\in\R^*$ such that the flows $\ct$ and $\ct\circ s$ are
topologically conjugate. If $I_{top}(\ct)\nsubseteq\{-1,1\}$ then
the flows $\ct$ is called topologically self-similar. More
precisely we will deal with continuous time changes of minimal
linear flows on the two torus. Each such flow is topologically
conjugate the special flow $T^f$ build over an irrational rotation
$Tx=x+\alpha$ on the circle and under a continuous roof function
$f:\T\to\R_+$.

We will show that if $T^f$ is topologically self-similar then $\alpha$ must be
a quadratic irrational and $f$ is topologically cohomological to a constant
function. It follows that if a continuous time change of a minimal linear
flow on the two torus is topologically self-similar then it is topologically
conjugate to a minimal linear flows on the two torus.

Let $(X,d)$ be a compact connected topological manifold. Denote by
$\widetilde{X}$ the universal covering space of $X$ and let
$\pi':\widetilde{X}\to X$ be the covering map. Denote by $\Theta$
the deck transformation group of the covering
$\pi':\widetilde{X}\to X$, i.e. $\Theta$ is the group of
homeomorphisms $\theta:\widetilde{X}\to\widetilde{X}$ such that
$\pi'\circ\theta=\pi'$. Then $\Theta$ is countable  (isomorphic to
the fundamental group of $X$) and it acts in the properly
discontinuous way, that is, for each $x\in X$ there exists an open
$V\ni x$  such that $\theta (V)\cap V=\emptyset$ whenever $Id\neq
\theta\in\Theta$. In what follows we need the following simple
observation.

\begin{lm}\label{km}
Assume that  $Z$ is a topological space and let $G$ be a countable
group (considered with the discrete topology). Assume that $G$
acts on $Z$ as homeomorphisms in the properly discontinuous way.
Assume moreover that $\Phi:\R\to G$ and that
\[
 \R\ni t\mapsto \Phi(t)z\in Z
\]
is continuous  for each $z\in Z$. Then $\Phi$ is continuous, hence
constant.
\end{lm}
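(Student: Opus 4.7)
The plan is to show that $\Phi$ is locally constant; since $\R$ is connected and $G$ carries the discrete topology, this will force $\Phi$ to be constant (and automatically continuous).

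Fix $t_0 \in \R$, set $g_0 := \Phi(t_0) \in G$, and pick an arbitrary point $z \in Z$. Put $x_0 := g_0 z \in Z$. By proper discontinuity applied at the point $x_0$, I obtain an open neighborhood $W \ni x_0$ such that $\theta W \cap W = \emptyset$ for every $\theta \in G\setminus\{\mathrm{Id}\}$. By the standing hypothesis, the map $t \mapsto \Phi(t) z$ is continuous at $t_0$ and takes the value $x_0$ there, so there exists an open neighborhood $U$ of $t_0$ with $\Phi(t)z \in W$ for every $t \in U$.

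Now for $t \in U$, consider the element $\eta_t := \Phi(t)\Phi(t_0)^{-1} \in G$. On the one hand, $\eta_t x_0 = \Phi(t)\Phi(t_0)^{-1}(g_0 z) = \Phi(t) z \in W$. On the other hand, $x_0 \in W$ by construction, so $\eta_t x_0 \in \eta_t W$. Hence $\eta_t x_0 \in \eta_t W \cap W$, which forces $\eta_t = \mathrm{Id}$ by the defining property of $W$, i.e.\ $\Phi(t) = \Phi(t_0)$ for all $t \in U$. Thus $\Phi$ is locally constant at $t_0$, and since $t_0$ was arbitrary it is locally constant on $\R$.

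Finally, a locally constant map into a discrete space is continuous; since $\R$ is connected, every continuous map $\Phi: \R \to G$ must be constant, which completes the proof. The only delicate point is invoking proper discontinuity at the right point, namely $x_0 = \Phi(t_0)z$ rather than at $z$ itself, so that the combination of continuity of the orbit map and the free-type disjointness condition yields the local constancy of $\Phi$; everything else is formal.
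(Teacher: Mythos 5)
Your proof is correct and follows essentially the same route as the paper: use proper discontinuity to get a separating neighborhood, use continuity of the orbit map to trap $\Phi(t)z$ in it for $t$ near $t_0$, and conclude local constancy, hence constancy by connectedness of $\R$. The only cosmetic difference is that you invoke proper discontinuity at $x_0=\Phi(t_0)z$ while the paper invokes it at $z$ and then translates the neighborhood by the homeomorphism $\Phi(t_0)$; these are equivalent.
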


\begin{proof}
Fix $t_0\in\R$ and $z\in Z$. Select an open $V\ni z$,  so that
$gV\cap V=\emptyset$ whenever $1\neq g\in G$. By the continuity
assumption, there is an open interval $W\ni t_0$ such that $
\Phi(t)(z)\in \Phi(t_0)(V)$ for $t\in W$. Thus
$\Phi(t)=\Phi(t_0)$. It follows directly that the map $W\ni
t\mapsto \Phi(t)$ is constant, whence $\Phi$ is continuous.
\end{proof}

Let $T:X\to
X$ be a homeomorphism and  $f:X\to\R\setminus\{0\}$ a continuous
function, which is globally either positive or negative. Let us consider
the skew product $T_{-f}:X\times\R\to
X\times\R$, $T_{-f}(x,r)=(Tx,r-f(x))$ and the orbit equivalence relation
$\equiv$ on $X\times\R$ defined by $(x_2,r_2)\equiv(x_1,r_1)$ if there
exists $n\in\Z$ such that $(x_2,r_2)=T^n_{-f}(x_1,r_1)$. Denote by
$X^f$ the quotient space $(X\times\R)/\equiv$. Then $X^f$ is a
compact topological manifold and the canonical projection
$\pi_1=\pi^f_1:X\times \R\to X^f$ is a covering map.

Let us consider the (continuous) flow $(\sigma_t)_{t\in\R}$ on
$X\times\R$ given by $\sigma_t(x,r)=(x,r+t)$. Since $T^n_{-f}$
commutes with $\sigma_t$ for every $n\in\Z$ and $t\in\R$, each
$\sigma_t$ transforms the equivalence classes for $\equiv$ into the
equivalence classes. Therefore $(\sigma_t)_{t\in\R}$ defines a
continuous flow on $X^f$, this flow is denoted
by $T^f$. If the function $f$ is positive the flow $T^f$ is called the
special flow built over the homeomorphism $T$ and under the roof function $f$.
Of course, $T^f_t\circ\pi_1=\pi_1\circ \sigma_t$ for
every $t\in\R$.

 Then, the map
$\pi_2:\widetilde{X}\times\R\to X\times\R$ given by
$\pi_2(\widetilde{x},r)=(\pi'(\widetilde{x}),r)$ is a covering map
and $\widetilde{X}\times\R$ is the universal covering of $X^f$
with the covering map $\pi^f=\pi_1^f\circ\pi_2$.

 For every $\theta\in\Theta$ denote by
$\underline{\theta}:\widetilde{X}\times\R\to\widetilde{X}\times\R$
the trivial extension
$\underline{\theta}(\widetilde{x},r)=({\theta}(\widetilde{x}),r)$.
Note that $\underline{\theta}$ belongs to the deck group of the
universal covering $\pi^f$.

Let $\widetilde{T}:\widetilde{X}\to\widetilde{X}$ be a lift of
$T:X\to X$. Recall that
$\widetilde{T}:\widetilde{X}\to\widetilde{X}$ is a homeomorphism.
Let us consider the group automorphism $\gamma:\Theta\to\Theta$
given by
\[\gamma(\theta)=\widetilde{T}\circ\theta\circ\widetilde{T}^{-1}\]
and the semidirect product $\Theta\rtimes_{\gamma}\Z$ with
multiplication
\[(\theta,m)\cdot(\theta',m')=(\theta\circ\gamma^m(\theta'),m+m').\]
Denote by
$\widetilde{T}_{-\widetilde{f}}:\widetilde{X}\times\R\to\widetilde{X}\times\R$
the skew product
$\widetilde{T}_{-\widetilde{f}}(\widetilde{x},r)=
(\widetilde{T}(\widetilde{x}),r-\widetilde{f}(\widetilde{x}))$,
where $\widetilde{f}=f\circ\pi'$.

\begin{pr}[Proposition~1.1 in \cite{Ke-Ma-Se}]
The deck transformation group $\Theta^f$ of the universal covering
$\pi^f:\widetilde{X}\times\R\to X^f$ is equal to
\[\{\underline{\theta}\circ \widetilde{T}^m_{-\widetilde{f}}:\theta\in\Theta,\;m\in\Z\}\]
and $(\theta,m)\mapsto \underline{\theta}\circ \widetilde{T}^m_{-\widetilde{f}}$
establishes the group isomorphism of $\Theta\rtimes_{\gamma}\Z$
and $\Theta^f$.
\end{pr}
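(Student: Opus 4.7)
The plan is to build the isomorphism in two stages that mirror the factorization $\pi^f=\pi^f_1\circ\pi_2$. A useful preliminary observation is that $\widetilde{X}\times\R$ is simply connected, so $\pi^f$ is indeed a universal covering; moreover $\pi_2=\pi'\times\mathrm{Id}$ is itself the universal cover of $X\times\R$, with deck group $\{\underline{\theta}:\theta\in\Theta\}$.

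First I would verify that every $\underline{\theta}\circ\widetilde{T}^m_{-\widetilde{f}}$ lies in $\Theta^f$. The factor $\underline{\theta}$ preserves the fibers of $\pi_2$, hence of $\pi^f$, thanks to $\pi'\circ\theta=\pi'$; and $\widetilde{T}_{-\widetilde{f}}$ covers $T_{-f}$ via $\pi_2$, while $\pi^f_1$ collapses each $T_{-f}$-orbit. Next I would check that $(\theta,m)\mapsto\underline{\theta}\circ\widetilde{T}^m_{-\widetilde{f}}$ is a homomorphism. The only non-trivial point is the conjugation identity
\[
\widetilde{T}_{-\widetilde{f}}\circ\underline{\theta}\circ\widetilde{T}^{-1}_{-\widetilde{f}}=\underline{\widetilde{T}\circ\theta\circ\widetilde{T}^{-1}}=\underline{\gamma(\theta)},
\]
a one-line direct computation using the $\Theta$-invariance $\widetilde{f}\circ\theta=\widetilde{f}$, which itself follows from $\widetilde{f}=f\circ\pi'$; iterating in $m$ gives the full semidirect product law. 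Injectivity is then automatic: if $\underline{\theta}\circ\widetilde{T}^m_{-\widetilde{f}}=\mathrm{Id}$, reading the second coordinate forces $\widetilde{f}^{(m)}\equiv 0$, and since $f$ does not vanish we must have $m=0$ and then $\theta=\mathrm{Id}$.

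The substantive step is surjectivity. Given $\Phi\in\Theta^f$, the identity $\pi^f_1\circ\pi_2\circ\Phi=\pi^f_1\circ\pi_2$ means that for each $(\widetilde{x},r)$ there is an integer $m(\widetilde{x},r)$ with $\pi_2\Phi(\widetilde{x},r)=T^{m(\widetilde{x},r)}_{-f}\pi_2(\widetilde{x},r)$. I expect the main obstacle to be upgrading this pointwise statement to $m$ being a constant. For this I would choose a neighborhood of $\pi^f(\widetilde{x}_0,r_0)$ evenly covered by $\pi^f_1$, so that its $\pi^f_1$-preimage splits as a disjoint union $\bigsqcup_{n\in\Z}T^n_{-f}(V)$ of sheets; continuity of $\pi_2\circ\Phi$ near $(\widetilde{x}_0,r_0)$ then traps $\pi_2\Phi(\widetilde{x},r)$ in a single sheet, forcing $m$ to be locally constant, and connectedness of $\widetilde{X}\times\R$ makes it globally constant.

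Once $m\in\Z$ is global, I set $\Psi:=\Phi\circ\widetilde{T}^{-m}_{-\widetilde{f}}$ and compute $\pi_2\circ\Psi=T^m_{-f}\circ T^{-m}_{-f}\circ\pi_2=\pi_2$. Hence $\Psi$ is a deck transformation of the universal covering $\pi_2$, and by the preliminary observation $\Psi=\underline{\theta}$ for a unique $\theta\in\Theta$. The alternative to invoking $\pi_2$ as a universal cover is to argue directly: $\Psi(\widetilde{x},r)=(\theta(\widetilde{x},r)\widetilde{x},r)$ for some $\theta(\widetilde{x},r)\in\Theta$, and the proper discontinuity of the $\Theta$-action (exactly as in Lemma~\ref{km}) makes $\theta(\widetilde{x},r)$ locally constant in $(\widetilde{x},r)$, hence constant. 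Either way, $\Phi=\underline{\theta}\circ\widetilde{T}^m_{-\widetilde{f}}$, completing the surjectivity and hence the proof.
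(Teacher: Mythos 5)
Your proof is correct. Note that the paper itself gives no argument for this statement: it is imported verbatim as Proposition~1.1 of \cite{Ke-Ma-Se}, so there is nothing internal to compare against, and a self-contained verification like yours is genuinely additional content. Your argument is the natural one and all the delicate points are handled: the identity $\pi_2\circ\widetilde{T}_{-\widetilde{f}}=T_{-f}\circ\pi_2$ and $\pi_1^f\circ T_{-f}=\pi_1^f$ give containment; the conjugation relation $\widetilde{T}_{-\widetilde{f}}\circ\underline{\theta}\circ\widetilde{T}_{-\widetilde{f}}^{-1}=\underline{\gamma(\theta)}$ rests precisely on $\widetilde{f}\circ\theta=\widetilde{f}$, which you correctly trace back to $\widetilde{f}=f\circ\pi'$ and $\pi'\circ\theta=\pi'$; injectivity uses that $f$ has constant sign so $\widetilde{f}^{(m)}$ cannot vanish identically for $m\neq 0$ (the same fact guarantees the exponent $m(\widetilde{x},r)$ in the surjectivity step is well defined, which is worth saying explicitly); and the upgrade from a pointwise exponent to a constant one via evenly covered neighbourhoods of $\pi_1^f$ plus connectedness of $\widetilde{X}\times\R$ is exactly the mechanism the paper itself isolates in Lemma~\ref{km} for the analogous continuity-forces-constancy step later in Section~\ref{sec:topol}, so your proof is also stylistically consistent with how the authors argue. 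The final reduction $\Psi=\Phi\circ\widetilde{T}^{-m}_{-\widetilde{f}}$, identified with a deck transformation of $\pi_2=\pi'\times\mathrm{Id}$ and hence with some $\underline{\theta}$, closes the argument cleanly.
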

We will identify the groups $\Theta\rtimes_{\gamma}\Z$ and
$\Theta^f$.

For any $s\in\R^*\setminus\{1\}$ let us consider the flow
$T^f\circ (s^{-1})=(T^f_{s^{-1}t})_{t\in\R}$ on $X^f$. This flow is topologically
isomorphic to the flow $(T^{sf}_{t})_{t\in\R}$ on
$X^{sf}$. Indeed, the homeomorphism $U:X\times\R\to X\times\R$
given by $U(x,r)=(x,sr)$ satisfies \[U\circ T_{-f}=T_{-sf}\circ
U\quad\text{ and }\quad U\circ\sigma_{s^{-1}t}=\sigma_t\circ U.\]
Therefore, $U$ induces a homeomorphism $U':X^f\to X^{sf}$ with
$U'\circ T^f _{s^{-1}t}=T^{sf}_t\circ U'$.

Suppose that $s\in I_{top}(T^f)\setminus\{1\}$. As the flows
$T^f$ and $T^f\circ (s^{-1})$ on $X^f$ are
topologically isomorphic,  the flows
$(T^f_{t})_{t\in\R}$ on $X^f$ and $(T^{sf}_{t})_{t\in\R}$ on
$X^{sf}$ are also topologically isomorphic. Thus  there exists a
homeomorphism $S:X^f\to X^{sf}$ such that $S\circ T^f
_{t}=T^{sf}_t\circ S$. Let $\widetilde{S}:\widetilde{X}\times\R\to
\widetilde{X}\times\R$ be a lift of $S$. Then $\widetilde{S}$ is a
homeomorphism such that
${S}\circ\pi^f=\pi^{sf}\circ\widetilde{S}$. Since
$S^{-1}\circ T^{sf}_{-t}\circ S\circ T^f _{t}= id_{X^f}$, its lift
$\widetilde{S}^{-1}\circ \sigma_{-t}\circ \widetilde{S}\circ
\sigma_{t}$ is an element of the deck transformation group
$\Theta^f$, so there exists a map $\R\ni t\mapsto (\theta(t),m(t))\in
\Theta\rtimes_{\gamma}\Z$ such that
\[\widetilde{S}^{-1}\circ \sigma_{-t}\circ \widetilde{S}\circ
\sigma_{t}=\underline{\theta}(t)\circ \widetilde{T}^{m(t)}_{-f}.\]
Now, for each $(\widetilde{x},r)\in\widetilde{X}\times\R$ the map
$$
\R\ni t\mapsto \widetilde{S}^{-1}\circ \sigma_{-t}\circ \widetilde{S}\circ
\sigma_{t}(\widetilde{x},r)\in \widetilde{X}\times\R$$
is continuous. By Lemma~\ref{km} applied to $\Theta^f$ we obtain that the
map $t\mapsto (\theta(t),m(t))$ is constant.
Moreover, $(\theta(0),m(0))=(id_X,0)$, so
\begin{equation}\label{eq:comsigma}
\widetilde{S}\circ \sigma_{t}=\sigma_{t}\circ\widetilde{S}.
\end{equation}
For every $\widetilde{\theta}\in\Theta^{f}$ the homeomorphism
$\widetilde{S}\circ\widetilde{\theta}\circ\widetilde{S}^{-1}$ is a
deck transformation of $\pi^{sf}$, so there exists
$A:\Theta^{f}\to\Theta^{sf}$ such that
\begin{equation}\label{eq:comtheta}
\widetilde{S}\circ\widetilde{\theta}\circ\widetilde{S}^{-1}=A(\widetilde{\theta}).
\end{equation}
Moreover, $A:\Theta^{f}\to\Theta^{sf}$ is a group isomorphism
which can be identified with the automorphism $A:
\Theta\rtimes_{\gamma}\Z\to \Theta\rtimes_{\gamma}\Z$.

Let $\widetilde{S}=(S_1,S_2)$, where $S_1:\widetilde{X}\times\R\to
\widetilde{X}$ and $S_2:\widetilde{X}\times\R\to\R$. Let
$A=(A_1,A_2)$, where $A_1:\Theta\rtimes_{\gamma}\Z\to \Theta$ and
$A_2:\Theta\rtimes_{\gamma}\Z\to\Z$. In view of
\eqref{eq:comsigma},
\[\widetilde{S}(\widetilde{x},t)=\widetilde{S}\circ\sigma_t(\widetilde{x},0)
=
\sigma_t\circ\widetilde{S}(\widetilde{x},0)=(S_1(\widetilde{x},0),S_2(\widetilde{x},0)+t).\]
Therefore
\[\widetilde{S}(\widetilde{x},t)=(V(\widetilde{x}),t+g(\widetilde{x})),\]
where $V:\widetilde{X}\to \widetilde{X}$ is a homeomorphism and
$g:\widetilde{X}\to\R$ is a continuous function. Note that if
$(\theta,m)$ denotes $\underline{\theta}\circ
\widetilde{T}^m_{-\widetilde{f}}$ then
\begin{align*}
\widetilde{S}\circ(\theta,m)(\widetilde{x},r)&=\widetilde{S}
(\theta\circ
\widetilde{T}^m(\widetilde{x}),r-\widetilde{f}^{(m)}(\widetilde{x}))\\&=
(V\circ\theta\circ
\widetilde{T}^m(\widetilde{x}),r-\widetilde{f}^{(m)}(\widetilde{x})+g(\theta\circ
\widetilde{T}^m(\widetilde{x}))),
\end{align*}
while if we  set $(\theta,m)=\underline{\theta}\circ \widetilde{T}^m_{-s\widetilde{f}}$ then
\begin{align*}
(\theta,m)\circ\widetilde{S}(\widetilde{x},r)&=
(\theta,m)(V(\widetilde{x}),r+g(\widetilde{x}))\\
&= (\theta\circ \widetilde{T}^m\circ
V(\widetilde{x}),r+g(\widetilde{x})-s\widetilde{f}^{(m)}(V(\widetilde{x})))
\end{align*}
Therefore,
in view of \eqref{eq:comtheta}, we have
\begin{gather*}
V\circ\theta\circ \widetilde{T}^m(\widetilde{x})=
A_1(\theta,m)\circ \widetilde{T}^{A_2(\theta,m)}\circ
V(\widetilde{x})\\
g(\theta\circ
\widetilde{T}^m(\widetilde{x}))-\widetilde{f}^{(m)}(\widetilde{x})
=g(\widetilde{x})-s\widetilde{f}^{(A_2(\theta,m))}(V(\widetilde{x})).
\end{gather*}
Let us consider the action of the group $\Theta\rtimes_{\gamma}\Z$
on $\widetilde{X}$ defined by
$(\theta,m)(\widetilde{x})=\theta\circ
\widetilde{T}^m(\widetilde{x})$. Then as a conclusion we have the
following.

\begin{tw}\label{th:podsim}
The number $s\in I_{top}(T^f)\setminus\{1\}$ if and only if there
exist a homeomorphism $V:\widetilde{X}\to\widetilde{X}$, a group
automorphism $A: \Theta\rtimes_{\gamma}\Z\to
\Theta\rtimes_{\gamma}\Z$ and a continuous function
$g:\widetilde{X}\to\R$ such that for every
$(\theta,m)\in\Theta\rtimes_{\gamma}\Z$
\begin{gather*}
V\circ(\theta,m)(\widetilde{x})= A(\theta,m)\circ
V(\widetilde{x})\\
s\widetilde{f}^{(A_2(\theta,m))}(V(\widetilde{x}))-\widetilde{f}^{(m)}(\widetilde{x})
=g(\widetilde{x})-g((\theta,m)(\widetilde{x})).
\end{gather*}
\end{tw}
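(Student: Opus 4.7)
The plan is as follows. Most of the forward implication $(\Rightarrow)$ has already been carried out in the paragraphs preceding the statement, so I would organize the proof around (i) packaging that discussion into the two displayed identities, and (ii) proving the converse by an explicit construction of the lifted conjugacy.

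For $(\Rightarrow)$, suppose $s\in I_{top}(T^f)\setminus\{1\}$. The preceding discussion produces a topological conjugacy $S:X^f\to X^{sf}$ together with a lift $\widetilde{S}:\widetilde{X}\times\R\to\widetilde{X}\times\R$. The key step is to apply Lemma~\ref{km} to the continuous map $t\mapsto \widetilde{S}^{-1}\circ\sigma_{-t}\circ\widetilde{S}\circ\sigma_t$ taking values in the properly discontinuous group $\Theta^f$: this map is constant, hence equal to its value at $t=0$, so $\widetilde{S}$ commutes with $(\sigma_t)_{t\in\R}$. That commutation forces the skew-product form $\widetilde{S}(\widetilde{x},r)=(V(\widetilde{x}),r+g(\widetilde{x}))$ with $V:\widetilde{X}\to\widetilde{X}$ a homeomorphism and $g:\widetilde{X}\to\R$ continuous. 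Conjugation by $\widetilde{S}$ defines a group isomorphism $A:\Theta^f\to\Theta^{sf}$ identified with an automorphism of $\Theta\rtimes_\gamma\Z$, and writing the identity $\widetilde{S}\circ(\theta,m)=A(\theta,m)\circ\widetilde{S}$ componentwise, using that $(\theta,m)$ acts as $\underline{\theta}\circ\widetilde{T}^m_{-\widetilde{f}}$ on the source and as $\underline{A_1(\theta,m)}\circ\widetilde{T}^{A_2(\theta,m)}_{-s\widetilde{f}}$ on the target, yields exactly the two equations of the statement.

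For $(\Leftarrow)$, given $V$, $A$ and $g$ satisfying the two equations, define
\[
\widetilde{S}(\widetilde{x},r):=(V(\widetilde{x}),\,r+g(\widetilde{x})).
\]
Then $\widetilde{S}$ is a homeomorphism (with inverse $(\widetilde{y},r)\mapsto(V^{-1}(\widetilde{y}),r-g(V^{-1}(\widetilde{y})))$) and visibly satisfies $\widetilde{S}\circ\sigma_t=\sigma_t\circ\widetilde{S}$ for each $t\in\R$. A direct computation (the one done in the text just before the statement, read backwards) shows that the first displayed equation is exactly the $\widetilde{X}$-coordinate of $\widetilde{S}\circ(\theta,m)=A(\theta,m)\circ\widetilde{S}$, while the second equation is its $\R$-coordinate. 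Thus $\widetilde{S}$ intertwines the action of $\Theta^f$ with that of $\Theta^{sf}$ via the group isomorphism $A$, so it sends $\equiv_f$-classes bijectively onto $\equiv_{sf}$-classes and descends to a homeomorphism $S:X^f\to X^{sf}$ which, because $\widetilde{S}$ commutes with $(\sigma_t)$, intertwines $(T^f_t)$ with $(T^{sf}_t)$. Composing with the homeomorphism $U':X^f\to X^{sf}$, $U'(x,r)=(x,sr)$, which conjugates $(T^f_{s^{-1}t})$ with $(T^{sf}_t)$, gives a topological conjugacy between $(T^f_t)$ and $(T^f_{s^{-1}t})$, proving $s^{-1}\in I_{top}(T^f)$ and hence $s\in I_{top}(T^f)$.

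I expect no serious obstacle: the conceptual input is concentrated in the single use of Lemma~\ref{km}, which trivializes the $\sigma_t$-component of the lift and gives the skew-product shape of $\widetilde{S}$. The remaining work is bookkeeping, in which one must keep careful track of how the semidirect-product multiplication in $\Theta\rtimes_\gamma\Z$ interacts with the two \emph{different} skew products $\widetilde{T}_{-\widetilde{f}}$ (acting on the source side) and $\widetilde{T}_{-s\widetilde{f}}$ (acting on the target side); this is precisely what produces the factor $s$ in front of $\widetilde{f}^{(A_2(\theta,m))}$ in the cohomological equation. It is important that $A$ be assumed a group automorphism, because it arises through conjugation by $\widetilde{S}$ and because exactly this property is what guarantees that the descended map $S$ is well defined on the quotient.
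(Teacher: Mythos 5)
Your proposal is correct and takes essentially the same route as the paper: the forward implication is exactly the paper's preceding discussion (Lemma~\ref{km} forcing $\widetilde{S}$ to commute with $(\sigma_t)_{t\in\R}$, hence to have skew-product form, and conjugation of the deck groups producing the two displayed identities), while your explicit converse is the straightforward reversal that the paper leaves implicit when it states the theorem ``as a conclusion''. No gaps.
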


\begin{uw} If $T$ is uniquely ergodic, then so is $T^f$ and therefore in this case
$I_{top}(T^f)\subset I_{T^f}$.\end{uw}

\subsection{Special flows over irrational rotations}
Suppose that $T$ is the rotation by an irrational number
$\alpha\in \R$ on the additive circle $X=\T=\R/\Z$. Then $\widetilde{X}=\R$
and the deck transformation group is the group of translations of
$\R$ by integer numbers, so $\Theta=\Z$ with $n(x)=x+n$. As each
such translation commutes with the lift $\widetilde{T}:\R\to\R$,
$\widetilde{T}x=x+\alpha$, $\gamma=id_{\Z}$. Thus
$\Theta^f=\Z\times\Z$ and the action of this group on
$\widetilde{X}=\R$ is given by
\begin{equation}\label{lll1}(n,m)x=x+n+m\alpha.\end{equation}

We will now prove the following result describing  topological
self-similarities of $T^f$ whose second part is to be compared
with Remark~\ref{uwagarev2}.

\begin{pr}\label{prop:basicrotation}
Let $\alpha\in\R\setminus\Q$ and let $f:\T\to\R$ be a continuous
positive function. Then $s\in I_{top}(T^f)\setminus\{1\}$ if and
only if there exist a matrix $[a_{ij}]=A\in GL_2(\Z)$,
$\delta\in\R$ and a continuous function $g:\R\to\R$ such that
\begin{gather}
\label{zalalpha}
a_{12}+a_{22}\alpha =(a_{11}+a_{21}\alpha)\alpha,\\
\label{zalalphass}
a_{11}+a_{21}\alpha=\sigma s=\sigma(a_{22}-a_{21}\alpha)^{-1}\\
\label{linetwokon} s\widetilde{f}^{(a_{21}n+a_{22}m)}(\sigma
sx+\delta)-\widetilde{f}^{(m)}({x}) =g({x})-g(x+n+m\alpha),
\end{gather}
for all $m,n\in\Z$, where $\sigma=\det A$.

Moreover, $-1\in I_{top}(T^f)$ if and only if there exist a
continuous map $g:\T\to\R$ and $\delta\in\T$ such that
\begin{equation}\label{eq:odw}
f(\delta-x)-f(x)=g(x)-g(x+\alpha).
\end{equation}
\end{pr}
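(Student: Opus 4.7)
My plan is to apply Theorem~\ref{th:podsim} to the concrete situation $X=\T$, $\widetilde X=\R$, $\widetilde T(x)=x+\alpha$. Since every integer translation $\theta(x)=x+n$ commutes with $\widetilde T$, the automorphism $\gamma$ is trivial and $\Theta\rtimes_\gamma\Z=\Z\oplus\Z$, with action~\eqref{lll1}. Every group automorphism of $\Z\oplus\Z$ is then given by a unique matrix $A=[a_{ij}]\in GL_2(\Z)$, $A(n,m)=(a_{11}n+a_{12}m,\,a_{21}n+a_{22}m)$, so $A_2(n,m)=a_{21}n+a_{22}m$. The two conditions of Theorem~\ref{th:podsim} become
\begin{gather*}
V(x+n+m\alpha)=V(x)+(a_{11}n+a_{12}m)+(a_{21}n+a_{22}m)\alpha,\\
s\widetilde f^{(a_{21}n+a_{22}m)}(V(x))-\widetilde f^{(m)}(x)=g(x)-g(x+n+m\alpha),
\end{gather*}
to be solved in $V\in\operatorname{Homeo}(\R)$ and $g\in C(\R,\R)$.

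Setting $(n,m)=(1,0)$ and $(0,1)$ in the first equation gives $V(x+1)=V(x)+c_1$ and $V(x+\alpha)=V(x)+c_2$ where $c_1=a_{11}+a_{21}\alpha$, $c_2=a_{12}+a_{22}\alpha$. Iteration yields $V(x+n+m\alpha)-V(x)=nc_1+mc_2$ for all $(n,m)\in\Z^2$. Since $\Z+\Z\alpha$ is dense in $\R$ and $V$ is continuous, the group homomorphism $n+m\alpha\mapsto nc_1+mc_2$ on $\Z+\Z\alpha$ must extend continuously to $\R$; the standard argument then forces it to be a linear map $y\mapsto cy$, i.e.\ $c_2=c_1\alpha$, which is precisely~\eqref{zalalpha}. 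Consequently $V(x)=cx+\delta$ with $c=a_{11}+a_{21}\alpha$ and $\delta=V(0)$.

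The third step pins down $c$ (hence $s$) using unique ergodicity of $T$. Specializing the cocycle equation to $(n,m)=(1,0)$ gives the telescoping relation $g(x+k)-g(x)=-s\sum_{j=0}^{k-1}\widetilde f^{(a_{21})}(cx+cj+\delta)$, which by unique ergodicity of the rotation by $c$ on $\T$ (valid because either $a_{21}\ne 0$, so $c\notin\Q$, or the trivial case $a_{21}=0$ discussed separately) yields $g(x+k)/k\to -s\alpha a_{21}\int f$ after applying it with $k=\lfloor m\alpha\rfloor$. Dividing the $(0,m)$-case of the cocycle equation by $m$ and letting $m\to\infty$ then gives, by unique ergodicity of $T$,
\[(sa_{22}-1)\int_\T f = s\alpha a_{21}\int_\T f,\]
so $s(a_{22}-a_{21}\alpha)=1$; combined with the algebraic identity $(a_{11}+a_{21}\alpha)(a_{22}-a_{21}\alpha)=\det A$, which is an immediate consequence of~\eqref{zalalpha}, one obtains $c=\sigma s$. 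This establishes~\eqref{zalalphass} and, by substituting $V(x)=\sigma s x+\delta$ into the cocycle equation, also~\eqref{linetwokon}. This asymptotic analysis, where $g$ is only assumed continuous and not bounded, is the main obstacle. For the converse direction it is direct to verify that $\widetilde S(x,r)=(\sigma s x+\delta,\,r+g(x))$, under the assumptions \eqref{zalalpha}--\eqref{linetwokon}, commutes with $(\sigma_t)$ and intertwines the deck groups $\Theta^f$ and $\Theta^{sf}$ via $A$, so it descends to a homeomorphism $S\colon X^f\to X^{sf}$ conjugating $T^f$ with $T^{sf}$, i.e.\ $s\in I_{top}(T^f)$.

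For the last statement, I would specialize to $s=-1$. The identity $c=\sigma s=-\sigma$ combined with $c=a_{11}+a_{21}\alpha$ and $\alpha\notin\Q$ forces $a_{21}=0$ and $a_{11}=-\sigma$. Inserted into \eqref{zalalpha} this yields $a_{12}=(a_{11}-a_{22})\alpha\in\Z$, so $a_{22}=a_{11}$ and $a_{12}=0$; as $\det A=a_{11}a_{22}=\sigma$ one obtains $A=-I$ with $\sigma=1$. Then $V(x)=-x+\delta$, and the $(1,0)$-equation in \eqref{linetwokon} reduces to $g(x+1)=g(x)$, so $g$ descends to a continuous function on $\T$; the $(0,1)$-equation gives $-\widetilde f^{(-1)}(-x+\delta)-\widetilde f(x)=g(x)-g(x+\alpha)$, i.e.\ $f(\delta-\alpha-x)-f(x)=g(x)-g(x+\alpha)$, which is~\eqref{eq:odw} after replacing $\delta$ by $\delta-\alpha$. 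The converse is immediate by reversing these computations, setting $A=-I$ and lifting $g$ to~$\R$.
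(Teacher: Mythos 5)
Your overall route coincides with the paper's: apply Theorem~\ref{th:podsim} with $\Theta^f\cong\Z^2$ acting by $(n,m)x=x+n+m\alpha$, deduce that $V$ is affine with slope $a_{11}+a_{21}\alpha$ (your continuous-extension argument for the homomorphism $n+m\alpha\mapsto nc_1+mc_2$ on the dense subgroup $\Z+\Z\alpha$ is a legitimate variant of the paper's device of integrating the $\Z$-periodic function $v(x)=V(x)-(a_{11}+a_{21}\alpha)x$ over $\T$), identify $s$ from asymptotics of Birkhoff sums, and finally reduce $s=-1$ to $A=-I$ and \eqref{eq:odw}. The algebraic identity $(a_{11}+a_{21}\alpha)(a_{22}-a_{21}\alpha)=\det A$ and the treatment of the degenerate case $a_{21}=0$ are also fine, and the converse directions are correctly reduced to checking the hypotheses of Theorem~\ref{th:podsim}.

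There is, however, one genuine gap, precisely at the point you yourself flag as ``the main obstacle''. You prove $\bigl(g(x+k)-g(x)\bigr)/k\to -sa_{21}\int_\T f$ along \emph{integer} $k$, substitute $k=[m\alpha]$, and then divide the $(0,m)$-equation by $m$; but that equation involves $g(x+m\alpha)$, not $g(x+[m\alpha])$, and you never justify
\[
\frac{g(x+m\alpha)-g(x+[m\alpha])}{m}\to 0 .
\]
These are two points tending to infinity whose difference lies in $[0,1)$, and mere continuity of $g$ (which is neither periodic nor uniformly continuous a priori) gives no control on such increments far from the origin. The paper's proof of \eqref{zbilroz} avoids this by anchoring the functional equation at the shifted base point $x+\{y\}\in[x,x+1]$ rather than at $x$: it writes $g(x)-g(x+y)=\bigl(g(x)-g(x+\{y\})\bigr)+\bigl(g(x+\{y\})-g(x+\{y\}+[y])\bigr)$, bounds the first bracket by $2\|g\|_{C[x,x+1]}$, and evaluates the second via \eqref{lll2} as $s\widetilde{f}^{(a_{21}[y])}(V(x+\{y\}))$, after which the uniform convergence $f^{(N)}/N\to\int_\T f$ gives the limit for all real $y\to\infty$ simultaneously. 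Your argument can be repaired the same way (apply your telescoping relation at the base point $x+\{m\alpha\}$ instead of at $x$), but as written the step fails and must be filled in before the identity $s(a_{22}-a_{21}\alpha)=1$, and hence \eqref{zalalphass}, is established.
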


\begin{proof}
By Theorem~\ref{th:podsim} and~(\ref{lll1}), $s\in I_{top}(T^f)\setminus\{1\}$ if
and only if there exist a homeomorphism $V:\R\to\R$, a group
automorphism \[
A: \Z^2\to \Z^2,\ A(n,m)=(a_{11}n+a_{12}m,a_{21}n+a_{22}m)\ \text{ with }\
A:=[a_{ij}]\in
GL_2(\Z)\] and a continuous function $g:\R\to\R$ such that for
every $(n,m)\in\Z^2$ and $x\in\R$
\begin{gather}
V(x+n+m\alpha)= V(x)+(a_{11}n+a_{12}m)+(a_{21}n+a_{22}m)\alpha\label{lineone}\\
s\widetilde{f}^{(a_{21}n+a_{22}m)}(V({x}))-\widetilde{f}^{(m)}({x})
=g({x})-g(x+n+m\alpha).\label{linetwo}
\end{gather}
Let us consider $v:\R\to\R$, $v(x)=V(x)-(a_{11}+a_{21}\alpha)\,x$.
In view of~\eqref{lineone}, we obtain
\begin{align*}
v(x+n+m\alpha)&=V(x+n+m\alpha)-(a_{11}+a_{21}\alpha)(x+n+m\alpha)\\
&=v(x)+(a_{12}+a_{22}\alpha-(a_{11}+a_{21}\alpha)\alpha)\,m.
\end{align*}
It follows that $v$ is $\Z$-periodic, so $v:\T\to\R$. Moreover (by taking $n=0$ and $m=1$ above),
$v(x+\alpha)=v(x)+a_{12}+a_{22}\alpha-(a_{11}+a_{21}\alpha)\alpha$,
so
\[\int_{\T}v(x)\,dx=\int_{\T}v(x+\alpha)\,dx=\int_{\T}v(x)\,dx+a_{12}+a_{22}\alpha-(a_{11}+a_{21}\alpha)\alpha.\]
Thus $a_{12}+a_{22}\alpha=(a_{11}+a_{21}\alpha)\alpha$, so \eqref{zalalpha} holds.
Moreover, $v$ is a
constant function and $V(x)=\gamma x+\delta$ with
$\gamma:=a_{11}+a_{21}\alpha$ and  some real $\delta$.

\emph{Case 1.} Suppose that $a_{12}$ or $a_{21}$ is equal to zero.
As $\alpha$ is irrational and by~(\ref{zalalpha}),
$a_{12}+(a_{22}-a_{11})\alpha-a_{21}\alpha^2=0$, it follows that
$a_{12}=a_{21}=0$ and $a_{11}=a_{22}=\pm 1$. Hence $V(x)=\pm
x+\delta$ and, by \eqref{linetwo},
\[s\widetilde{f}^{(\pm m)}(\pm x+\delta)-\widetilde{f}^{(m)}({x})
=g({x})-g(x+n+m\alpha).\]
Setting $m=0$ we have $g(x+n)=g(x)$, so
$g$ is $\Z$-periodic. Therefore, $g$ can be treated as a map on $\T$ and taking $m=1$
we have
\[sf^{(\pm 1)}(\pm x+\delta)-f(x)=g(x)-g(x+\alpha)\quad\text{ for all }x\in\T.\]
Recalling that $f^{(-1)}(y)=-f(y-\alpha)$, it follows that
\begin{align*}
(\pm s-1)\int_{\T}f(x)\,dx&=s\int_{\T}f^{(\pm 1)}(\pm
x+\delta)\,dx-\int_{\T}f(x)\,dx\\
&=\int_{\T}g(x)\,dx-\int_{\T}g(x+\alpha)\,dx=0.
\end{align*}
Since $s\neq 1$ and $f$ is positive, it follows that $s=-1$ and
$a_{11}=a_{22}=-1$. Therefore, \eqref{zalalphass}, \eqref{linetwokon}
and \eqref{eq:odw} hold.

\emph{Case 2.} Suppose that both $a_{12}$ and $a_{21}$ are
non-zero. Since $a_{12}+(a_{22}-a_{11})\alpha-a_{21}\alpha^2=0$,
the irrational number $\alpha$ is a quadratic irrational. In view
of \eqref{linetwo} (by substituting $m$ by $0$,  and by
substituting $n$ by $-a_{22}m$ and $m$ by $a_{21}m$,
respectively), we obtain
\begin{equation}\label{lll2}
s\widetilde{f}^{(a_{21}n)}(\gamma x+\delta)
=g({x})-g(x+n),\end{equation}
\begin{equation}\label{lll3}
-\widetilde{f}^{(a_{21}m)}({x})
=g({x})-g(x+(a_{21}\alpha-a_{22})m).
\end{equation}

It follows that for every $x\in\R$
\begin{equation}\label{zbilroz}
\lim_{|y|\to\infty}\frac{g(x)-g(x+y)}{y}=sa_{21}\int_{\T} f\,dx.
\end{equation}
Indeed, if $|y|$ is large enough
\begin{align*}
\frac{g(x)-g(x+y)}{y}&=\frac{g(x)-g(x+\{y\})}{y}+\frac{g(x+\{y\})-g(x+\{y\}+[y])}{[y]}\frac{[y]}{y}\\
&\stackrel{\eqref{lll2}}{=}\frac{g(x)-g(x+\{y\})}{y}+sa_{21}\frac{f^{(a_{21}[y])}(V(x+\{y\}))}{a_{21}[y]}\frac{[y]}{y}.
\end{align*}
Since $|g(x)-g(x+\{y\})|\leq 2\|g\|_{C[x,x+1]}$, $f^{(n)}/n$ tends
to $\int_{\T} f\,dx$ uniformly and $[y]/y\to 1$ as $|y|\to\infty$,
we get \eqref{zbilroz}. Furthermore (by taking $y=(a_{21}\alpha-a_{22})m$ in~(\ref{zbilroz})),
\begin{align*}
\frac{-\int_{\T} f\,dx}{a_{21}\alpha-a_{22}}
\leftarrow&\frac{-\widetilde{f}^{(a_{21}m)}({x})/a_{21}m}{(a_{21}\alpha-a_{22})}\\&
\stackrel{\eqref{lll3}}{=}
\frac{g(x)-g(x+(a_{21}\alpha-a_{22})m)}{a_{21}(a_{21}\alpha-a_{22})m}\to
s\int_{\T} f\,dx,
\end{align*}
hence $s=(a_{22}-a_{21}\alpha)^{-1}$.

In view of \eqref{zalalpha}, $(1,\alpha)A=\gamma(1,\alpha)$, so
$(1,\alpha)A^{-1}=\gamma^{-1}(1,\alpha)$. Moveover,
\[A^{-1}=\sigma
           \begin{pmatrix}
             a_{22} & -a_{12} \\
             -a_{21} & a_{11}
           \end{pmatrix}
         ,\text{ where }\sigma:=\det A=\pm1.
\]
It follows that $\gamma^{-1}=\sigma(a_{22}-a_{21}\alpha)$, hence
$\gamma=\sigma s$, this yields \eqref{zalalphass}.
Therefore, $V(x)=\sigma sx+\delta$, so
\eqref{linetwo} gives \eqref{linetwokon}.

It follows that if $-1\in I_{top}(T^f)$ then $a_{12}$ or $a_{21}$
is equal to zero. Otherwise, using Case 2 we have $a_{11}+\alpha
a_{21}=\gamma=-\sigma$, so $a_{21}=0$, a contradiction. Moreover, by Case 1, this
yields the second part of the proposition.
\end{proof}

\begin{wn}
If $\alpha\in\R\setminus \Q$ is not a quadratic irrational then
$I_{top}(T^f)\subset\{1,-1\}$.
\end{wn}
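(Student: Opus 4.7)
The plan is to derive the corollary directly from the characterization in Proposition~\ref{prop:basicrotation}. Suppose $s \in I_{top}(T^f)\setminus\{1\}$. Then by Proposition~\ref{prop:basicrotation} there exists $A=[a_{ij}]\in GL_2(\Z)$ such that the compatibility relation \eqref{zalalpha} holds, which I would rewrite as the quadratic equation
\begin{equation*}
a_{21}\alpha^{2}+(a_{11}-a_{22})\alpha-a_{12}=0.
\end{equation*}

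The first step is to exploit the hypothesis that $\alpha$ is not a quadratic irrational: this forces every integer coefficient of the above polynomial to vanish, hence $a_{21}=0$, $a_{12}=0$, and $a_{11}=a_{22}$. Combined with $A\in GL_2(\Z)$, I get $A=\varepsilon I$ with $\varepsilon\in\{-1,1\}$, and consequently $\sigma=\det A=1$.

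Next, I substitute this into \eqref{zalalphass}. With $a_{21}=0$, $a_{11}=a_{22}=\varepsilon$, $\sigma=1$, the relation reads
\begin{equation*}
\varepsilon = \sigma s = s,
\end{equation*}
so $s=\varepsilon\in\{-1,1\}$. Since we assumed $s\neq 1$, this leaves only $s=-1$, proving $I_{top}(T^f)\subset\{1,-1\}$.

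There is really no obstacle here; all the work has been done in Proposition~\ref{prop:basicrotation}. The only thing to notice is that the quadratic polynomial in $\alpha$ arising from \eqref{zalalpha} is forced to be the trivial polynomial, which is exactly the content of $\alpha$ not being a quadratic irrational (one should just check that a degenerate case where $a_{21}=0$ but $(a_{11}-a_{22})\alpha=a_{12}$ with $a_{11}\neq a_{22}$ cannot occur, but this is immediate since $\alpha$ is irrational).
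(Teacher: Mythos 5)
Your proof is correct and takes essentially the same route as the paper: the corollary is an immediate consequence of Proposition~\ref{prop:basicrotation}, whose own proof already notes that when $a_{12}$ and $a_{21}$ are both non-zero the relation \eqref{zalalpha} forces $\alpha$ to be a quadratic irrational, while the remaining case forces $s=-1$. Your direct derivation from \eqref{zalalpha} and \eqref{zalalphass} (including the check that irrationality of $\alpha$ kills the degenerate linear case) is exactly the intended argument.
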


\begin{wn}\label{cor:roznica}
There exists a continuous time change $(\varphi_t)_{t\in\R}$ of a minimal
linear flow on $\T^2$ such that $I((\varphi_t)_{t\in\R})=\R^*$ and
$I_{top}((\varphi_t)_{t\in\R})=\{1\}$.
\end{wn}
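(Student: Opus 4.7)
By the discussion at the start of Section~\ref{sec:topol}, every continuous time change of a minimal linear flow on $\T^2$ is topologically conjugate to a special flow $T^f$ built over an irrational rotation $T_\alpha x=x+\alpha$ with continuous positive roof $f\in C(\T,\R_+)$. It therefore suffices to exhibit a pair $(\alpha,f)$ such that $I(T^f)=\R^*$ and $I_{top}(T^f)=\{1\}$.

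To obtain $I_{top}(T^f)=\{1\}$, the main tool is Proposition~\ref{prop:basicrotation}. First, choose $\alpha$ irrational but not a quadratic irrational; Case~2 of the proof of that proposition then cannot occur, so $I_{top}(T^f)\subseteq\{-1,1\}$. To rule out $-1$, the second part of the proposition combined with the Gottschalk--Hedlund theorem applied to the minimal rotation $T_\alpha$ reduces matters to ensuring that for every $\delta\in\T$ the Birkhoff sums $\sum_{j=0}^{n-1}\bigl[f(\delta-(x+j\alpha))-f(x+j\alpha)\bigr]$ are not uniformly bounded in $(n,x)$. This can be arranged by prescribing the Fourier coefficients of $f$ along the denominators $(q_n)$ of the continued fraction of $\alpha$ to be large enough to force the growth of these reflected Birkhoff sums, while keeping $f$ continuous.

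For the measure-theoretic condition $I(T^f)=\R^*$, the flow $T^f$ must possess a ``renormalization symmetry'' realising every scale $s\in\R^*$; the paradigmatic example in the paper is the horocycle flow, where the identity $g_sh_tg_s^{-1}=h_{e^{-2s}t}$ yields $I=\R^*$. We construct $f$ via an Anosov--Katok / AACCP-type scheme in the spirit of Section~\ref{sec:analytic}, engineering a continuous $f$ for which $T^f$ is measure-theoretically isomorphic to a flow with $I=\R^*$. One convenient target is a Gaussian flow with Lebesgue spectrum of countable infinite multiplicity (see the discussion of Gaussian flows in the introduction): such a flow has $I=\R^*$, since rescaling preserves the spectral type (Lebesgue, up to a density) and the infinite multiplicity, so the rescaled flow is Gaussian with the same spectral invariants and hence isomorphic. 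Alternatively, one realises $T^f$ directly as a rank-one Anosov--Katok flow with a tower structure self-similar across all scales.

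A single diagonal construction then produces $(\alpha,f)$ satisfying simultaneously (i) $\alpha$ non-quadratic, (ii) unboundedness of the reflected Birkhoff sums for every $\delta\in\T$, and (iii) $I(T^f)=\R^*$. The main obstacle is (iii): producing a genuinely continuous (not merely measurable) roof $f$ whose special flow realises the full self-similarity $I=\R^*$. This demands both careful summability control on the AACCP cocycle (so that $f$ lies in $C(\T)$) and a renormalisation structure producing every $s\in\R^*$. Conditions (i) and (ii) are mild and can be woven into the diagonal construction alongside (iii).
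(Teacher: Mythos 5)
Your reduction to a special flow $T^f$ over an irrational rotation, the choice of a non-quadratic $\alpha$ to force $I_{top}(T^f)\subseteq\{-1,1\}$, and the Gottschalk--Hedlund criterion for ruling out $-1$ all match the paper's strategy. The heart of the statement, however, is the coexistence of $I(T^f)=\R^*$ with a \emph{continuous} roof, and here your proposal has a genuine gap. Your first proposed target, a Gaussian flow with Lebesgue spectrum of countable multiplicity, cannot be realized as a special flow over an irrational rotation: a Gaussian system has positive (indeed infinite) entropy as soon as its spectral measure has a nontrivial absolutely continuous component, whereas by Abramov's formula every special flow over a rotation with integrable roof has zero entropy. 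Your second proposed target, a rank-one / AACCP flow ``self-similar across all scales,'' is not substantiated; $I(\cT)=\R^*$ is an extremely rigid requirement and no mechanism producing it is offered. The paper instead takes the horocycle flow on the modular surface, for which $I=\R^*$ is already known (Corollary~\ref{cor:horrev}, via the geodesic renormalization) and whose centralizer is trivial (Corollary~\ref{cor:horrev1}, via Ratner rigidity); by loose Bernoullicity and the Katok/Ornstein--Rudolph--Weiss theory it admits a special representation over any irrational rotation, and by Kochergin's theorem the roof can be taken continuous. This is the missing ingredient (iii) of your ``diagonal construction,'' and it is supplied by citation of deep external results rather than by any construction of the type you sketch.

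There is a second, subtler gap. Once $f$ is pinned down (up to cohomology) by the requirement that $T^f$ be isomorphic to the horocycle flow, you can no longer freely ``prescribe the Fourier coefficients of $f$'' to make the reflected Birkhoff sums unbounded; the two demands are in tension and your diagonalization does not resolve it. The paper's resolution is to perturb within the measure-theoretic isomorphism class: replace $f$ by $f'=f+j-j\circ T$ where $j$ is merely measurable but $j-j\circ T$ is continuous, so that $f'$ is still continuous and $T^{f'}$ is measure-theoretically isomorphic to $T^f$ (hence $I(T^{f'})=\R^*$). One then shows $-1\notin I_{top}(T^{f'})$ by combining Proposition~\ref{prop:basicrotation} with the triviality of $C(T^f)$ via Lemma~\ref{lm:trivcent}, choosing $j$ by a lacunary Fourier series so that $x\mapsto j(x)+j(\delta-x)$ is not a.e.\ equal to any continuous function (a Fej\'er-summability argument). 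Without some such mechanism your step for excluding $-1$ does not go through.
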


\begin{proof}[Proof of Corollary~\ref{cor:roznica}]
On the modular space $\Gamma\backslash PSL_2(\R)$,
$\Gamma=PSL_2(\Z)$, by Corollary~\ref{cor:horrev} and Corollary~\ref{cor:horrev1},
$I((h_t)_{t\in\R})=\R^*$ and $C((h_t)_{t\in\R})=\{h_t:\:t\in\R\}$.
As it was shown in \cite{RatLB} that this flow is loosely Bernoulli, so $(h_t)_{t\in\R}$
is isomorphic to a special flow over any irrational rotation $Tx=x+\alpha$ on the circle,
see \cite{KatMon}, \cite{Or-Ru-We}. Moreover, the roof function $f:\T\to\R_+$ can
be chosen continuous, see \cite{Ko9}.

If $\alpha$ is not a quadratic irrational, then
$I_{top}(T^f)\subset\{-1,1\}$. On the other hand, since $T^f$ is
measure-theoretically isomorphic with $(h_t)_{t\in\R}$, we have
$I(T^f)=\R^*$ and $C(T^f)=\{T^f_t:\:t\in\R\}$.  Moreover, $T^f$ is
a special representation of a continuous time change of a minimal
linear flow on $\T^2$.

Now we will see that $f$ can be chosen so that $-1\notin I_{top}(T^f)$ which will
finish the proof.
Suppose that there exists a special representation $T^f$ of the horocycle
flow $(h_t)_{t\in\R}$ such that $-1\in I_{top}(T^f)$.
Then we can construct another continuous function $f':\T\to\R$ such that
$T^{f'}$ is isomorphic to $T^f$ and $-1\notin I_{top}(T^{f'})$.

Since $-1\notin I_{top}(T^f)$, by Proposition~\ref{prop:basicrotation}, there exist
$\delta\in\T$ and $g:\T\to\R$ continuous such that
\begin{equation}\label{eq:coheqnaf}
f(\delta-x)-f(x)=g(x)-g(x+\alpha).
\end{equation}
Let $j:\T\to\R$ be a measurable map such that $x\mapsto j(x)-j(x+\alpha)$ is continuous
and
\begin{equation}\label{zal:naj}
\text{$x\mapsto j(x)+j(\delta-x)$ is not a.e.\ equal to any continuous function; }
\end{equation}
the existence of such a map will be discussed at the end of the proof.

We claim that if $f'=f+j-j\circ T$ then $-1\notin I_{top}(T^{f'})$.
Otherwise, by Proposition~\ref{prop:basicrotation}, there exist
$\delta'\in\T$ and $g':\T\to\R$ continuous such that
\[f'(\delta'-x)-f'(x)=g'(x)-g'(x+\alpha).\]
In view of \eqref{eq:coheqnaf}, it follows that
\begin{multline*}
f(\delta-x)-f(\delta'-x)=(f(\delta-x)-f(x))-(f'(\delta'-x)-f'(x))\\
\quad-(j(x)-j(x+\alpha))+(j(\delta'-x)-j(\delta'-x+\alpha))\\
=((g-g'-j)(x)-(g-g'-j)(x+\alpha))+(j(\delta'-x)-j(\delta'-x+\alpha)).
\end{multline*}
Replacing $x$ by $\delta'-x$ we have
\[f(x+\delta-\delta')-f(x)=h(x)-h(x+\alpha),\]
with
\[h(x)=(j+g'-g)(\delta'-x+\alpha)+j(x).\]
Since $C(T^f)=\{T^f_t:\:t\in\R\}$, in view of Lemma~\ref{lm:trivcent},
there exist $k\in\Z$ and $t_0\in\R$ such that $\delta-\delta'=k\alpha$
and $h=t_0-f^{(k)}$ a.e. Therefore
\[j(\delta'-x+\alpha)+j(x)=(g-g')(\delta'-x+\alpha)+t_0-f^{(k)}(x)\quad \text{ a.e.}\]
Moreover,
\begin{align*}j(\delta-x)-j(\delta'-x+\alpha)&=j(\delta-x)-j(\delta-x-(k-1)\alpha)\\
&=(f')^{(-k+1)}(\delta-x)-f^{(-k+1)}(\delta-x),
\end{align*}
Adding both equations we obtain that the map $x\mapsto
j(\delta-x)+j(x)$ is a.e.\ equal to a continuous map, contrary to
\eqref{zal:naj}.

Finally we point out a measurable map $j:\T\to\R$ such that $j-j\circ T$ is continuous
and satisfies \eqref{zal:naj}. Let $(q_n)_{n\geq 1}$ be a subsequence of
denominators of $\alpha$ such that $q_{n+1}\geq 2q_n$ for $n\geq 1$.
Let us consider an $L^2$ map $j:\T\to\R$ with the Fourier series
\[j(x)=\sum_{n\geq 1}\frac{1}{n}\cos2\pi q_n(x-\delta/2).\]
Since
\[j(x)-j(x+\alpha)=\sum_{n\geq 1}\frac{2}{n}\sin2\pi q_n(x+\alpha/2-\delta/2)\sin\pi q_n\alpha\]
with $|\sin\pi q_n\alpha|\leq\|q_n\alpha\|<1/q_{n+1}<1/2^n$ for $n\geq 1$,
we can choose $j$ such that $j-j\circ T$ is continuous.
Moreover, $j(\delta-x)=j(x)$ for a.e.\ $x\in\T$. Therefore, we need to show
that $j$ (or equivalently $j_\delta(x)=j(x+\delta/2)$) is not a.e.\ equal to any continuous function.
Some elementary arguments show that the Fourier series of $j_\delta$ is not Ces\`aro summable at $0$.
Then, by classical Fejer's theorem, $j_\delta$ is not a.e.\ equal to any continuous function,
which completes the proof.
\end{proof}

The following lemma is easily obtained by induction.

\begin{lm}\label{lem:pomoc}
Let $0<|\rho|<1$ and let $(x_n)_{n\geq 0}$ be a real sequence
such that \[|\rho x_{n+1}- x_n|\leq M\quad\text{ for }\quad n\geq 0.\] Then
\[|\rho^nx_n- x_0|\leq \frac{1-|\rho|^{n}}{1-|\rho|}M\leq \frac{M}{1-|\rho|}\quad\text{ for }\quad n\geq 0.\]
\end{lm}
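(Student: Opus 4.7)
The plan is a straightforward induction on $n$, exploiting a telescoping identity of the form
\[
\rho^{n+1}x_{n+1}-x_0 \;=\; \rho^n\bigl(\rho x_{n+1}-x_n\bigr) \;+\; \bigl(\rho^n x_n - x_0\bigr),
\]
which lets the hypothesis $|\rho x_{n+1}-x_n|\leq M$ feed directly into the previous bound. The base case $n=0$ is trivial since both sides vanish, and the second inequality $\frac{1-|\rho|^n}{1-|\rho|}\leq\frac{1}{1-|\rho|}$ is immediate from $1-|\rho|^n\leq 1$ (using $0<|\rho|<1$).

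For the inductive step, assuming $|\rho^n x_n - x_0|\leq \frac{1-|\rho|^n}{1-|\rho|}M$, I would apply the triangle inequality to the telescoping identity above to get
\[
|\rho^{n+1}x_{n+1}-x_0|\;\leq\; |\rho|^n\,|\rho x_{n+1}-x_n|\;+\;|\rho^n x_n-x_0|\;\leq\; |\rho|^n M \;+\;\frac{1-|\rho|^n}{1-|\rho|}M,
\]
and then simplify the right-hand side, observing that
\[
|\rho|^n \;+\;\frac{1-|\rho|^n}{1-|\rho|}\;=\;\frac{|\rho|^n(1-|\rho|)+1-|\rho|^n}{1-|\rho|}\;=\;\frac{1-|\rho|^{n+1}}{1-|\rho|},
\]
closing the induction.

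There is no real obstacle here — the only content is to notice the correct telescoping identity so that the factor $\rho^n$ appears in front of the controlled quantity $\rho x_{n+1}-x_n$, which geometrically dampens its contribution and produces the geometric series bound. This lemma is clearly intended as a technical tool for a subsequent argument (presumably to control iterates of some cocycle-like relation under a contraction factor $\rho$), so I would keep the proof as brief as the two displays above.
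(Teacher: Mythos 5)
Your proof is correct, and it is exactly the induction the paper has in mind (the paper only remarks that the lemma ``is easily obtained by induction'' and gives no further details). The telescoping identity, the triangle inequality step, and the closing algebraic simplification are all accurate, so nothing needs to be changed.
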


\begin{tw}
If there exists $s\in I_{top}(T^f)\setminus\{-1,1\}$ then $f$ is
cohomologous to a constant function via a continuous transfer
function.
\end{tw}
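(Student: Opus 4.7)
We may assume $|s|<1$ (otherwise replace $s$ by $s^{-1}\in I_{top}(T^f)$). Since $s\neq\pm1$, the analysis of Case~2 in the proof of Proposition~\ref{prop:basicrotation} applies: $\alpha$ is a quadratic irrational, both $a_{12}$ and $a_{21}$ are nonzero, and there exist $A=[a_{ij}]\in GL_2(\Z)$, $\delta\in\R$ and continuous $g:\R\to\R$ satisfying \eqref{zalalpha}--\eqref{linetwokon}. Writing $\gamma=\sigma s$, the map $V(x)=\gamma x+\delta$ is a contraction with fixed point $x_0=\delta/(1-\gamma)$, and \eqref{zalalphass} gives $a_{22}-a_{21}\alpha=s^{-1}$, hence $sa_{22}-1=sa_{21}\alpha$.

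The plan is to prove that $|\tilde f^{(m)}(x)-mc|$ is uniformly bounded for $m\in\Z$ and $x$ in any compact set, where $c=\int_{\T}f\,d\mu$; Gottschalk--Hedlund's theorem (applied to the minimal homeomorphism $Tx=x+\alpha$ and the continuous function $f-c$) then yields a continuous $h:\T\to\R$ with $f=c+h\circ T-h$, which is the desired conclusion. The bound will be extracted from \eqref{linetwokon} by iterating it and exploiting the contraction $V$.

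Fix $m\in\Z\setminus\{0\}$. Define inductively $m_0=m$ and, at step $k\geq 0$, choose $n_k\in\Z$ with $|n_k+m_k\alpha|\leq 1/2$ (nearest integer), and set $m_{k+1}=a_{22}m_k+a_{21}n_k$. A direct substitution of \eqref{linetwokon} at the points $V^kx$ with parameters $(n_k,m_k)$ telescopes to
\[
\tilde f^{(m)}(x)=s^N\tilde f^{(m_N)}(V^Nx)+\sum_{k=0}^{N-1}s^k\bigl[g(V^kx+n_k+m_k\alpha)-g(V^kx)\bigr].
\]
Using $sa_{22}-1=sa_{21}\alpha$ one computes $sm_{k+1}-m_k=sa_{21}(n_k+m_k\alpha)$, so $|sm_{k+1}-m_k|\leq|sa_{21}|/2$. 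By Lemma~\ref{lem:pomoc} (with $\rho=s$, $x_k=m_k$), the sequence $s^Nm_N$ converges to some $m^*$ with $|m^*-m|\leq|sa_{21}|/(2(1-|s|))$, bounded independently of~$m$.

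Let $N\to\infty$. Since $V$ is a strict contraction, $V^Nx\to x_0$, so all of the points $V^kx$ and $V^kx+n_k+m_k\alpha$ remain in a fixed compact set $K\subset\R$ (depending only on $x$ and $x_0$, not on $m$). Hence each summand is bounded by $2\sup_K|g|$ and the telescoping series converges absolutely to a quantity bounded by $2\sup_K|g|/(1-|s|)$. For the remaining term, write $\tilde f^{(m_N)}=m_Nc+u^{(m_N)}$ with $u=f-c$ of zero mean. Unique ergodicity of $T$ gives $\sup_y|u^{(m_N)}(y)/m_N|\to0$, so together with the boundedness of $s^Nm_N$ we obtain
\[
s^N\tilde f^{(m_N)}(V^Nx)=(s^Nm_N)c+s^Nu^{(m_N)}(V^Nx)\longrightarrow m^*c.
\]
Passing to the limit in the telescoped identity yields $|\tilde f^{(m)}(x)-mc|\leq|m^*-m|\,|c|+2\sup_K|g|/(1-|s|)$, which is bounded uniformly in $m\in\Z$ (and, for $x$ on any compact, uniformly in $x$). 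Gottschalk--Hedlund then produces a continuous $h:\T\to\R$ with $f=c+h\circ T-h$.

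The main obstacle is that the iteration data $(n_k,m_k)$ and the hit points $V^kx+n_k+m_k\alpha$ depend on the initial $m$, so bounds must be uniform in $m$. The resolution rests on three points: $V$ being a genuine contraction (guaranteed by $|s|<1$) keeps the iterates $V^kx$ in a fixed compact; the nearest-integer choice of $n_k$ keeps $|n_k+m_k\alpha|\leq 1/2$ at every step; and Lemma~\ref{lem:pomoc} converts the one-step bound $|sm_{k+1}-m_k|\leq|sa_{21}|/2$ into a uniform bound on $s^Nm_N-m$. Unique ergodicity of the rotation then kills the contribution of the oscillatory part $u^{(m_N)}$, giving the cohomology.
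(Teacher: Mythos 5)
Your proof is correct and takes essentially the same route as the paper's: both iterate \eqref{linetwokon} along the contraction $V(x)=\sigma sx+\delta$ together with the recursion $m_{k+1}=a_{21}n_k+a_{22}m_k$, control $s^km_k$ via Lemma~\ref{lem:pomoc}, dispose of the remainder term by unique ergodicity of the rotation, and conclude with Gottschalk--Hedlund (the paper merely normalizes first, setting $F=\widetilde f-c$ and $G=g+x\,sa_{21}c$, and starts from a sufficiently large $m_0$ so that $|m_k|\to\infty$ is automatic). One tiny point: your appeal to $\sup_y|u^{(m_N)}(y)/m_N|\to0$ presupposes $|m_N|\to\infty$, which can fail when $m^*=0$; but in that case $|s^N\widetilde f^{(m_N)}(V^Nx)|\le |s^Nm_N|(|c|+\|u\|_\infty)\to0=m^*c$ anyway, so the argument stands.
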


\begin{proof}
Without loss of generality we can assume that $|s|<1$. By
Proposition~\ref{prop:basicrotation}, there exist a matrix
$[a_{ij}]=A\in GL_2(\Z)$, $\delta\in\R$ and a continuous function
$g:\R\to\R$ satisfying  \eqref{zalalpha}-\eqref{linetwokon}. Let
us consider
\[F(x):=\widetilde{f}(x)-\int_{\T}{f}(t)\,dt\quad\text{ and }\quad
G(x):=g(x)+xsa_{21}\int_{\T}{f}(t)\,dt.\]
In view of
\eqref{linetwokon} and \eqref{zalalphass},
\begin{align}\label{eq:fin1}
sF^{(a_{21}n+a_{22}m)}(\sigma s x+\delta)-F^{(m)}({x})
=G({x})-G(x+n+m\alpha).
\end{align}
with (remembering that $F$ is $1$-periodic) $\int_{\T}F(t)\,dt=0$.

Choose any $x_0\in[0,1]$ and $m_0\geq \frac{2|sa_{21}|}{1-|s|}$. Let us define
inductively three sequences: $(x_k)_{k\geq 0}$ taking values in $[0,1)$ and
two other integer-valued sequences $(m_k)_{k\geq 0}$, $(n_k)_{k\geq 0}$ so
that:
\[n_{k}:=-[x_k+m_k\alpha],\quad m_{k+1}:=a_{21}n_k+a_{22}m_k,\quad x_{k+1}:=\{\sigma s x_k+\delta\}\]
for all $k\geq 0$. In view of \eqref{eq:fin1}, it follows that
\[sF^{(m_{k+1})}(x_{k+1})-F^{(m_k)}({x_k})
=G(x_k)-G(x_k+n_k+m_k\alpha)\] and
$x_k+n_k+m_k\alpha=\{x_k+m_k\alpha\}\in[0,1)$. Therefore,
\[|sF^{(m_{k+1})}(x_{k+1})-F^{(m_k)}({x_k})|\leq C:=2\max_{x\in[0,1]}|G(x)|.\]
By Lemma~\ref{lem:pomoc},
\begin{equation}\label{eq:diffs}
|s^kF^{(m_{k})}(x_{k})-F^{(m_0)}({x_0})|\leq \frac{C}{1-|s|}\quad\text{ for }\quad k\geq 0.
\end{equation}
Moreover, as $s(a_{22}-a_{21}\alpha)=1$ (see \eqref{zalalphass}), we have
\begin{align*}sm_{k+1}-m_k&=sa_{21}n_k+(sa_{22}-1)m_k=-sa_{21}[x_k+m_k\alpha]+(sa_{22}-1)m_k\\
&= -sa_{21}(x_k+m_k\alpha)+(sa_{22}-1)m_k+sa_{21}\{x_k+m_k\alpha\}
\\
&=
\big(s(a_{22}-a_{21}\alpha)-1\big)m_k+sa_{21}\big(\{x_k+m_k\alpha\}-x_k\big)\\
&= sa_{21}\big(\{x_k+m_k\alpha\}-x_k\big).
\end{align*}
Hence $|sm_{k+1}-m_k|\leq |sa_{21}|$ for $k\geq 0$. In view of
Lemma~\ref{lem:pomoc}, it follows that
\[|s^km_k-m_0|\leq \frac{|sa_{21}|}{1-|s|},\]
so the sequence $(s^km_k)_{k\geq 0}$ is bounded and (by the lower bound of $m_0$) bounded away from zero. Thus
$|m_k|\to\infty$ as $k\to\infty$.

By the unique ergodicity of the rotation $T$, the sequence
$F^{(n)}/n$ tends uniformly to $\int_{\T}F(t)\,dt=0$ as
$|n|\to\infty$. It follows that,
\[s^kF^{(m_{k})}(x_{k})=s^km_k\frac{F^{(m_{k})}(x_{k})}{m_k}\to 0.\]
Therefore, passing to $k\to\infty$ in \eqref{eq:diffs}, we
have $|F^{(m_0)}({x_0})|\leq C/(1-|s|)$. Consequently,
$\|F^{(m)}\|_{\sup}\leq C/(1-|s|)$ for every $m\geq 1$. In view of
the classical Gottschalk-Hedlund theorem (Theorem 14.11 in \cite{Go-He}), $F=f-\int_{\T}f(t)\,dt$
is a coboundary with a continuous transfer function.
\end{proof}

\begin{uw} Consider the quadratic number $\alpha\in(0,1)$
satisfying $\frac1\alpha=\alpha+1$. Let $\cT=(T_t)_{t\in\R}$ be
the linear flow on $\T^2$ given by $(\alpha,1)$, that is
$T_t(x,y)=(x+t\alpha,y+t)$. Then $1/\alpha\in I_{top}(\cT)$.
Indeed, the rescaled flow $\cS=(S_t)_{t\in\R}$ is given by the
formula $S_t(x,y)=(x+t,y+\frac1\alpha t)$ and it is easy to see
that the homeomorphism $A:\T^2\to\T^2$ given by the matrix
$A=\left[\begin{array}{ll} 0 & 1\\1 & 1\end{array}\right]$
satisfies $A\circ T_t=S_t\circ A$ for each $t\in\R$.\end{uw}

%

\end{document}